\DeclareMathOperator{\diag}{diag}
\providecommand\inva[1]{\,\mathsf{d}#1}
\DeclareMathOperator{\He}{He}
\DeclareMathOperator{\tr}{tr}
\theoremstyle{plain}\newtheorem{theorem}{Theorem}
\theoremstyle{plain}\newtheorem{lemma}{Lemma}
\theoremstyle{plain}
\theoremstyle{plain}\newtheorem{proposition}{Proposition}
\theoremstyle{definition}
\theoremstyle{definition}\newtheorem{remark}{Remark}
\theoremstyle{definition}\newtheorem{definition}{Definition}
\theoremstyle{definition}\newtheorem{example}{Example}
\theoremstyle{definition}\newtheorem{assumption}{Assumption}
\def\jmo{\jmath\omega}
\def\jinf{\jmath\infty}
\def\Ltfd{\mathcal L_2(-\jinf, \jinf)}
\def\Lttd{\mathcal L_2[0, \infty)}
\def\Htff{\mathbb U(\Omega_f)}
\def\BibTeX{{\rm B\kern0.05em{\sc i\kern0.025em b}\kern0.08em
		T\kern-18.1667em\lower17.17ex\hbox{E}\kern 0.125emX}}
\begin{document}
	\pgfplotsset{compat=1.14}
	
\title{Extension of generalized KYP lemma: from LTI systems to LPV systems}
	\author{Jingjing Zhang, Jan Heiland, Peter Benner, Xin Du 
 \vspace{-2.5em}
		\thanks{Manuscript received xx-yy, 20zz; revised xx-yy, 20zz. }
		\thanks{Jingjing Zhang is with the School of Mechatronic Engineering and Automation, Shanghai University, Shanghai, 200444, China,  and also with the State Key Laboratory of Mathematical Sciences, Academy of Mathematics and Systems Science, Chinese Academy of Sciences, 100190, China (e-mail: zhang-jingjing@shu.edu.cn) }
		\thanks{Jan Heiland is with the Max Planck Institute for Dynamics of Complex Technical Systems, 39106 Magdeburg, Germany (e-mail: heiland@mpi-magdeburg.mpg.de)}
         \thanks{Peter Benner is with the Max Planck Institute for Dynamics of Complex Technical Systems, 39106 Magdeburg, Germany (e-mail: benner@mpi-magdeburg.mpg.de).}
		\thanks{Xin Du was with the School of Mechatronic Engineering and Automation, Shanghai University, Shanghai, 200444, China, before April, 2023. His contribution for this work was conducted as an independent researcher. Corresponding author. (e-mail: duxin@shu.edu.cn)}}

	\maketitle

\begin{abstract}

The generalized Kalman-Yakubovich-Popov (gKYP)
lemma, established by Iwasaki and Hara (2005 IEEE TAC), has served as a fundamental tool for finite-frequency analysis and synthesis of linear time-invariant (LTI) systems. 
Over the past two decades, efforts to extend the gKYP lemma from LTI systems to linear parameter varying (LPV) systems have been hindered by the intricate time-frequency inter-modulation effect between the input signal and the time-varying scheduling parameter.
A key element in this framework is the frequency-dependent Integral Quadratic Constraint (IQC) function, which enables 
time-domain interpretation of the gKYP lemma, as demonstrated by Iwasaki et al in their companion 2005 System and Control Letter paper. 
The non-negativity property of this IQC function plays a crucial role in characterizing system behavior under frequency-limited inputs.
In this paper, we first demonstrate through a counterexample that the IQC non-negativity property may fail for LPV systems, thereby invalidating existing results that rely on this assumption. 
To address this issue, we propose a reformulation strategy that replaces the original frequency range with an enlarged one, thereby restoring the non-negativity property for LPV systems.
Moreover, we establish that the minimal required expansion depends on the interaction(or gap) between the system poles and the original frequency range, as well as a set of controllability Gramians. 
Building upon this results, an extension of gKYP lemma is presented, which allows us to conduct finite-frequency analysis of LPV systems in a direct and reliable manner. The potential and efficiency compared to existing results are demonstrated through numerical examples.

\end{abstract}

\begin{IEEEkeywords}
Generalized Kalman-Yakubovich-Popov lemma, finite-frequency analysis, linear parameter-varying systems, integral quadratic constraint, bounded input bounded state stability, uniform asymptotic stability
\end{IEEEkeywords}

\section{Introduction}
The input-output performance plays a crucial role in the analysis and synthesis
of dynamical systems \cite{IO_oppenheim1997signals}. 
With the input signal being subjected to the user's choice or technical
limitations,
one may well assume that
it's spectrum 
is restricted to an
{\it a priori} known frequency range. 
Similarly naturally, any
input-output performance analysis over a finite frequency range rather than over
the
entire frequency range, can lead to improved results, as it can facilitate
given information of the relevant spectrum
distribution.


For linear time-invariant (LTI) systems, as the most standard model, the
input-output performance had been well addressed both on the entire frequency
range and on different finite frequency ranges. It is well-known that various
performance analyses and control design problems defined on the entire frequency range can be solved on the basis of the classic Kalman-Yakubovich-Popov (KYP)
lemma which is attributed to the original works by Yakubovich \cite{KYP_Yakubovich} from 1962,
Kalman \cite{KYP_Kalman} from 1963 and Popov \cite{KYP_Popov} from 1964. 
In the mid-90s, several works (see, e.g., \cite{KYP_Rantzer1996on}), 
have used the KYP lemma to establish an equivalence between performance
conditions in the (entire) frequency domain, input–output relationship of the system in the time domain, and conditions on the matrices describing the state space representation of the system. 
Because of it's fundamental nature, the KYP lemma has become recognized as a cornerstone of modern control theory. 
Thanks to the contribution of Iwasaki and Hara (see, e.g.,
\cite{gKYP_iwasaki2005generalized} and \cite{gKYP_iwasaki2005time}), the system
and control research was equipped with a generalized KYP (gKYP) lemma, which is
an extension to deal with performance analysis and control design problems
defined on a finite-frequency range. The last 20 years witnessed the success of
the gKYP lemma, with more than 1200 works that cite the gKYP lemma to now cover,
e.g., fault detection and controller and filter design for different engineering
applications.

Linear parameter varying (LPV) modeling is a widely adopted framework in control
systems that may overcome some limitations of LTI systems while keeping some of the
well-developed linear theory and techniques in place. 
Through the construction of parameter-dependent control Lyapunov functions,
valid extensions of the KYP lemma to LPV systems
have been reported on the entire frequency range; see
\cite{LPV_EF_wu1996induced,LPV_EF_polcz2020induced}. However, for
finite-frequency analysis/synthesis problems, no extension of the gKYP lemma for
LPV systems has been developed yet. A major issue is that, different from the
LTI case, the output signal of a general dynamical system will not necessarily
be in band with a possibly frequency-limited input signal. Correspondingly, the
input-output relationship cannot be expressed in terms of a transfer function or
other kinds of frequency domain inequality (FDI). 

Several attempts have been made to expand the gKYP lemma from LTI systems to LPV
systems; see, e.g., \cite{S_ding2010fuzzy,S_sun2013induced, S_Wang2017H,
S_baar2020parameter, S_li2014fault}. A remarkable common feature of these works
is the replacement of the finite-frequency input signal assumption by an
frequency-dependent {\it Integral Quadratic Constraint} (IQC) assumption. Such
IQCs were introduced by Iwasaki and Hara in their often underappreciated but
important companion paper \cite{gKYP_iwasaki2005time} to provide a time-domain
interpretation of the gKYP lemma. By distorting the fundamental assumption,
indeed, sufficient linear matrix inequality (LMI) conditions with a similar
structure of the LMI conditions in the gKYP lemma were derived, and the
numerical examples therein show that one may obtain enhanced in-band
analysis/control design performance by adopting the conditions. Nevertheless,
those attempts are hardly to be recognized as successful extension of the gKYP
lemma, as they suffer from the following intrinsic shortcomings:

  \begin{itemize}
\item[$\bullet$]  
For the sake of practical applicability, the assumption that the input signal is
frequency limited makes sense as the critical frequency-band information can be
obtained in {\it a priori}. On the contrary, the IQC is defined with regard to the
system's state and its derivative so that the frequency-dependent IQC can only
be verified {\it a posteriori}. In other words, taking the frequency-dependent
IQC as the fundamental assumption does not well apply unless the IQC assumption
can be established together with the finite-frequency assumption on the input signal. 

\item[$\bullet$] 
In an LTI system setting, although it is not stated explicitly in
\cite{gKYP_iwasaki2005time}, a finite-frequency input signal will always render the system state and its derivative satisfying the frequency-dependent IQC.
In the LPV system setting, the relationship between the finite-frequency input
signal and the frequency-dependent IQC is not well explored. Even more, the
unavoidable occurrence of out-of-band frequency components in the system makes
it difficult to a-priori infer frequency-dependent IQCs based only on
finite-frequency input signals. We will provide the direct relation of finite
frequencies and the IQC for the LTI case
below and a counterexample for the LPV case.
\end{itemize}



In this paper, we shed some light on the correlation of finite-frequency input
signals and the IQC for the system's state and its derivative therewith
developing an applicable and well-founded counterpart of the gKYP lemma for the LPV system setting. Specifically speaking, our contributions are:


  \begin{itemize}
\item[(1)] 
Starting with the LTI case, we first re-state how the finite-frequency assumption implies a
finite-frequency IQC with respect to system state and its derivative to make it
more explicit. At the same time, the physical meaning of the implication is
re-examined, while we further reveal how the concept of controllability (as well
as the controllability Gramians) plays an important role in this relation.

\item[(2)] 
For the LPV case, we construct a counterexample that illustrates how IQC may not
hold despite a finite-frequency assumption on the input signals. This
counterexample also addresses the shortcomings of the previous attempts.

\item[(3)] 
By re-interpreting the structure of a finite-frequency IQC, we show that the
sufficiency from finite-frequency input signals to some IQC condition can be
recovered with an frequency-range enlargement scheme. We show that there always
exist a sufficiently large frequency range (including the pre-specified
frequency range as a subset) so that an IQC will eventually hold. The minimal
necessary enlargement of the frequency range is derived and presented as a
simple and insightful formula that includes the gap between the system pole
allocation and the given frequency range in the complex plane, and traces of a
family of controllability Gramians.

\item[(4)] 
Based upon the re-constructed IQC with regard to the enlarged frequency range, a
valid extension of gKYP lemma for LPV system is presented, which, in addition,
enables a direct and reliable finite-frequency analysis of LPV systems. The
potential and efficiency compared to existing results is demonstrated through a numerical example.

\end{itemize}

Notations: For a matrix $M$, the transpose and complex conjugate transpose are denoted by $M^{T}$ and $M^{*}$, respectively. $\He(M)$ represents $M^*+M$.
We use $M<(>)0$ and $M\leq(\geq) 0$ to indicate that the matrix $M$ is negative (positive) and semi-negative (semi-positive) definite, respectively.  
And $\tr(M)$ is the trace of matrix $M$. 
In view of matrices, $I$ and $0$ represent identity matrix and zero matrix with appropriate dimensions. By $\diag\{\ldots\}$, we represent a block-diagonal matrix with the entries to be specified. 
We let $\mathbb{R}^+$, $\mathbb{R}^n$ and $\mathbb{R}^{n\times m}$ denote the set of positive real numbers, $n$-dimensional Euclidean space and the vector space of $n\times m$ matrices with real entries. 
$\mathbb{S}^n$ represents the set of $n\times n$ real symmetric matrices. 
The symbol $\lambda(\cdot)$, $\lambda_{\max}(\cdot)$ and $\lambda_{\min}(\cdot)$
and 
denote an eigenvalue of a general matrix, and
the largest and smallest eigenvalue of a symmetric matrix.

\section{Revisit of gKYP lemma}\label{sec:LTI}

\subsection{LTI systems, basics, KYP lemma}

In this section, we revisit the well-established finite-frequency analysis 
of LTI systems and clarify the conceptual ambiguities arising from frequency
domain/time domain characterization of the signals and performance index.
Consider a continuous-time LTI models in state-space form
\begin{equation}\label{LTI-time}
	\begin{split}
		\dot{x}(t) &= A x(t)+B  u(t),\\
		y(t) &= C x(t)+D u(t),
	\end{split}
\end{equation}
where $u\in \mathbb R^{p}$ is the input signal, $x\in \mathbb R^{n}$ is the
system state, and $y\in \mathbb R^{q}$ is the output signal.
In what follows, we rely on the equivalence of square-integrable signals 
$u\in \mathcal L_2[0,\infty)$ 
and $\mathcal{U}\in \mathcal L_2(-\jmath \infty, \jmath \infty)$
in time and frequency domain, as it is induced by the Fourier transform
$\mathcal F$ defined as 
\[\begin{array}{l}
   \mathcal{F}(u)(\jmo) := \mathcal{U}(\jmath \omega)= \int_{-\infty}^\infty u(t) e^{-\jmath \omega t}   \inva t,  
\end{array}\]
where $\jmath$ is the imaginary unit, and its inverse
\[\begin{array}{l}
  \mathcal{F}^{-1}(\mathcal{U})(t) : = u(t) = \frac{1}{2\pi} \int_{-\infty}^\infty \mathcal{U}(\jmath \omega) e^{\jmath \omega t}\inva\omega,
\end{array}\]
for a frequency $\omega\in \mathbb R$ or time $t\in \mathbb R$ together
with the \emph{Parseval identity} that states that
\begin{equation}\label{eq:Parseval}
\begin{array}{l}
  \int_{-\infty}^{\infty} u^*(t)u(t) \inva t := \|u\|^2_{\mathcal L_2(-\infty,
  \infty)} = \|\mathcal{U}\|^2_{\mathcal L_2(-\jmath\infty, \jmath\infty)} :=
\frac{1}{2\pi} \int_{-\infty}^{\infty} \mathcal{U}^*(\jmath \omega)\mathcal{U}(\jmath \omega) \inva \omega.
\end{array}
\end{equation}

For signals $u \in \mathcal L_2[0, \infty)$ on the half-line, the formulas
readily apply and, assuming that $x(0)=0$, we can consider the frequency domain
representation of the LTI model \eqref{LTI-time} as  
\begin{equation}\label{LTI-Fre} 
	\begin{split}
		\jmath \omega \mathcal X(\jmath \omega) &= A \mathcal X(\jmath\omega)+B  \mathcal U(\jmath\omega), \\
		 \mathcal Y(\jmath\omega) &= C \mathcal X(\jmath\omega) + D \mathcal U(\jmath\omega),
	\end{split}
\end{equation}
that defines the transfer function matrix $G\colon (-\jmath \infty, \jmath
\infty) \to \mathbb R^{q\times p}$ by means of
\[\begin{array}{l}
     \mathcal Y(\jmath\omega) =[C(\jmath\omega I-A)^{-1}B+D] \mathcal U(\jmath\omega) :=G(\jmath\omega) \mathcal U(\jmath\omega).
\end{array}\]

We note that the Fourier transformation identifies the space $\Lttd$ of square
integrable signals on the half-line $[0, \infty)$ in time domain with the $\mathcal H_2$ (\emph{Hardy}-)space of signals that are
analytic in the open right-half plane in the frequency domain; see, e.g.,
\cite[Sec. 4.3]{ZhoDG96}.

The central topic in system analysis is the characterization of input-output
relationships. As, typically, norms of the signals are considered, many
input-output relationship can be described in a quadratic integral form like in the general so-called input-output performance in time domain, i.e., 
\begin{equation}\label{IO-Performance-TD}
\int_{0}^{\infty} \begin{bmatrix}
y(t)\\
u(t) 
\end{bmatrix}^* \Pi
\begin{bmatrix}
y(t)\\
u(t) 
\end{bmatrix}  \inva t \leq 0, 
\end{equation}
where the Hermitian matrix $\Pi \in  \mathbb R^{(q+p)\times (q+p)}$ presents the performance index matrix,
or (as an application of the Parseval identity \eqref{eq:Parseval}; cp., e.g.,
\cite[Th. 1]{gKYP_iwasaki2005time}), the equivalent input-output performance in frequency domain, i.e., 
\begin{equation}
\label{IO-Performance-FD}
\int_{-\infty}^{\infty} {\begin{bmatrix}
\mathcal Y(\jmath\omega)\\
\mathcal U(\jmath\omega) 
\end{bmatrix} }^* \Pi{\begin{bmatrix}
\mathcal Y(\jmath\omega)\\
\mathcal U(\jmath\omega) 
\end{bmatrix} } \inva\omega \leq 0.
\end{equation}

Note that with $\Pi=\diag\{I,-\gamma^2 I\}$, the input-output relationship recasts to the well-known induced $\mathcal L_2$ gain in
time domain (or the $\mathcal H_\infty$ norm in frequency domain); see
\cite{IO_wollnack2017fixed}.

The inequality \eqref{IO-Performance-FD} is readily formulated with regard to the finite-frequency property of the transfer function $G(\jmath\omega)$ as follows:
\begin{equation}
\begin{bmatrix}
   G(\jmath\omega)\\
I  
\end{bmatrix}^*\Pi
\begin{bmatrix}
   G(\jmath\omega)\\
I  
\end{bmatrix} 
\leq 0, \quad \forall \omega \in (-\infty, +\infty).
\end{equation}

In the 1960s, with the famous KYP lemma, a sufficient and necessary as well as numerically tractable solution in terms of LMI had been established for analyzing input-output
performance indices. 
Let us recall the KYP lemma now.


\begin{lemma}[KYP lemma, \cite{KYP_Rantzer1996on}] \label{lem:KYP}
Consider the LTI system \eqref{LTI-time} with $(A,B)$ controllable, and a Hermitian matrix $\Pi \in  \mathbb R^{(q+p)\times (q+p)}$. For any input signal $u\in\mathcal{L}_2[0,\infty)$, the following statements are equivalent:

\begin{enumerate}[(1)]
  \item
There exists a Hermitian matrix $P$, such that
 \begin{equation} \label{KYP_LMI}
\begin{bmatrix}
				A &  B\\
				I& 0 
 \end{bmatrix} ^*
              (\Theta \otimes P)    
              \begin{bmatrix}
				A &  B \\
				I & 0
              \end{bmatrix}
      +
      \begin{bmatrix}
                C & D  \\
				0 & I
			\end{bmatrix}^* \Pi 
      \begin{bmatrix}
				C    & D   \\
				0 & I 
			\end{bmatrix}   \leq 0, ~{\rm with}~\Theta=\begin{bmatrix}
    0& 1\\
    1 & 0
\end{bmatrix}.
	\end{equation}

  \item
The input-output performances in time- and frequency domain,
\eqref{IO-Performance-TD} and \eqref{IO-Performance-FD}, hold.

\end{enumerate}

\end{lemma}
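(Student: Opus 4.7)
The plan is to establish the two directions of the equivalence separately, exploiting Parseval's identity \eqref{eq:Parseval} to move between the time- and frequency-domain forms.

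For $(1)\Rightarrow(2)$, my approach is to substitute the signals satisfying \eqref{LTI-Fre} into the LMI \eqref{KYP_LMI} and exploit a telescoping, Lyapunov-type structure. Working directly in the frequency domain is cleanest. I would pre- and post-multiply \eqref{KYP_LMI} by the adjoint of $\begin{bmatrix}\mathcal X(\jmath\omega)\\ \mathcal U(\jmath\omega)\end{bmatrix}$ and by this vector itself, and then exploit $\jmath\omega\,\mathcal X = A\mathcal X + B\mathcal U$ together with $\Theta\otimes P=\begin{bmatrix}0 & P\\ P & 0\end{bmatrix}$. The first quadratic form collapses to $(\overline{\jmath\omega}+\jmath\omega)\,\mathcal X^*P\mathcal X=0$, leaving exactly the pointwise FDI $\begin{bmatrix}\mathcal Y\\ \mathcal U\end{bmatrix}^*\Pi\begin{bmatrix}\mathcal Y\\ \mathcal U\end{bmatrix}\leq 0$. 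Integrating in $\omega$ gives \eqref{IO-Performance-FD}, and Parseval's identity \eqref{eq:Parseval} then transports this to the time-domain statement \eqref{IO-Performance-TD}.

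For the converse $(2)\Rightarrow(1)$, I would first upgrade \eqref{IO-Performance-FD} to the pointwise transfer-function inequality $\begin{bmatrix}G(\jmath\omega)\\ I\end{bmatrix}^*\Pi\begin{bmatrix}G(\jmath\omega)\\ I\end{bmatrix}\leq 0$ for all $\omega\in\mathbb R$. This follows by testing \eqref{IO-Performance-FD} against input spectra $\mathcal U$ concentrated near an arbitrary $\omega_0$ and invoking continuity of $G$. The core task is then to synthesize a Hermitian $P$ realizing \eqref{KYP_LMI} from this FDI; for this I would invoke the lossless S-procedure/duality argument of Rantzer \cite{KYP_Rantzer1996on}. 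The FDI asserts the emptiness of a certain quadratic feasibility set in $(\mathcal X,\mathcal U)$ space, controllability of $(A,B)$ supplies the regularity condition that rules out uncontrollable obstructions, and a convex separation argument produces $P$ as the associated Lagrange multiplier.

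The hard part will be this last step. The forward algebraic manipulations are routine, but the existence of $P$ hinges essentially on controllability: uncontrollable modes of $A$ could create degenerate directions that block the Lagrangian construction, and it is precisely the losslessness (tightness) of the S-procedure that is non-trivial to justify. Since the purpose of this section is to revisit the gKYP lemma before embarking on the LPV extension rather than to re-derive the classical result from scratch, I would cite \cite{KYP_Rantzer1996on} for the separation argument rather than reproduce it in full, and instead use the reclaimed space to emphasize the features of the proof---namely, the Lyapunov collapse on the frequency axis and the role of controllability---that will break down in the LPV setting and motivate the upcoming counterexample and enlargement scheme.
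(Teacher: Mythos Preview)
The paper does not actually supply a proof of Lemma~\ref{lem:KYP}; it is stated as a recalled result with a direct citation to \cite{KYP_Rantzer1996on} and no argument is given. Your proposal therefore goes beyond what the paper does: you sketch both directions and, for the nontrivial $(2)\Rightarrow(1)$ implication, you correctly identify the lossless S-procedure/separation argument of Rantzer as the key ingredient and plan to cite it rather than reproduce it. That is entirely in line with the paper's treatment, and your technical outline for $(1)\Rightarrow(2)$ via the frequency-domain collapse $(\overline{\jmath\omega}+\jmath\omega)\mathcal X^*P\mathcal X=0$ is standard and correct.

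One small remark: your final paragraph suggests using the proof to foreshadow what breaks in the LPV setting. The paper does not do this here; the LPV difficulties are introduced only later (Section~\ref{sec:LPV}) through the inter-modulation discussion and the counterexample, not by annotating the KYP proof. If you want to match the paper's structure, simply state the lemma with the citation and move on; if you want to improve on it pedagogically, your idea is sound but would be an addition rather than a reproduction.
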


\subsection{Finite-frequency input signal, gKYP lemma, version 1}

In what follows, we will consider the subspace
of frequency-limited (with respect to $\Omega_f$) functions denoted by
\begin{equation}\label{eq:H2-finite-freq}
  \Htff:=\{\mathcal{U} \in \mathcal H_2: \mathcal{U}(\jmo) = 0, \,\text{for }\omega \notin \Omega_f\}.
\end{equation}
In particular, for $\mathcal{U}\in \Htff$ and $u:=\mathcal F^{-1}(\mathcal{U})$, we have that, $u(t)=0$, for
$t\leq 0$ so that $\mathcal{U}(\jmo) = \mathcal F(u) = \int_0^\infty u(t)e^{-\jmo
t}\inva t$.

Furthermore, one commonly distinguishes distinctive frequency ranges $\Omega_f$ as 
low-frequency range $\Omega_l$, middle-frequency range $\Omega_m$, or
high-frequency range $\Omega_h$ by means of
threshold values $\varpi_l$, $\varpi_1 < \varpi_2$, $\overline
\omega_h$ and the requirements that
\begin{subequations}
\begin{align}
&  \Omega_l:= [-\varpi_l,\varpi_l],\\
&\Omega_m := [-\varpi_2,
-\varpi_1]\cup [\varpi_1, \varpi_2], \\
& \Omega_h :=
(-\infty,-\varpi_h]\cup[\varpi_h,\infty).
\end{align}
\end{subequations}

Moreover, it is helpful to allow for the consistent notion that $\Omega_f=(-\infty, \infty):=\Omega_e$, when the signal has support on the entire-frequency range.

We note that the finiteness of the signals can be similarly characterized by the
inequality that for all $\omega \in \Omega_f$ one has
\begin{equation} \label{eq:f-positive-inband}
  f(\omega,\Omega_f)=\begin{bmatrix}
    \jmath\omega\\
    1
\end{bmatrix}^*  \Psi_f \begin{bmatrix}
    \jmath\omega\\
    1
\end{bmatrix} \geq 0,
\end{equation}
where $\Psi_f$ is a characteristic matrix generated for the frequency range that
can be either a segment of a straight line in the complex plane, to particular
curves in the complex plane described by a polynomial equality or a polynomial
inequality; see \cite{Pipeleers14}. The considered set of curves is known to
include the union of segments of a line as a special case with the choices of
\begin{subequations}\label{eq:psif}
\begin{align}
&\Psi_f=
\left[\begin{smallmatrix}
 -1 & 0\\
 0 & \varpi_l^2
\end{smallmatrix}\right],{\kern 30pt}\Omega_f \subset \Omega_l,\\
& \Psi_f=\left[\begin{smallmatrix}
 -1 & \jmath \varpi_c\\
 -\jmath \varpi_c & -\varpi_1\varpi_2
\end{smallmatrix}\right],~ \Omega_f \subset \Omega_m,\\
& \Psi_f=\left[\begin{smallmatrix}
 1 & 0\\
 0 & -\varpi_h^2
\end{smallmatrix}\right], {\kern 30pt} \Omega_f \subset \Omega_h,
  \end{align}
\end{subequations}
for the thresholds $\varpi_l$, $\varpi_1<\varpi_c=\frac{\varpi_1+\varpi_2}{2}<\varpi_2$, or $\varpi_h\in \mathbb
R$.
It has been noted that the validity of the input-output performance inequality \eqref{IO-Performance-TD}/\eqref{IO-Performance-FD} greatly depends on the
class of input signals considered so that specific versions have been developed
for advanced system analysis and synthesis. 
In real-world systems, the
input signals may only range in a certain frequency band (in the simplest case
they are composed of a sum of sinusoids and, thus, have a
finite and discrete spectrum) so that many relevant results have been found
under a finite-frequency spectrum assumption
\cite{disturbance_zhang2023FDSC,disturbance_marino2016hybrid}. In the case that
the spectrum of the input signals is within a finite-frequency range $\Omega_f$, the inequality \eqref{IO-Performance-FD} can be equivalently rewritten as finite-frequency domain form
\begin{equation}\label{IO-Performance-FD-FF}
\int_{\Omega_f}
{\begin{bmatrix}
\mathcal Y(\jmath\omega)\\
\mathcal U(\jmath\omega) 
\end{bmatrix} }^* \Pi \begin{bmatrix}
\mathcal Y(\jmath\omega)\\
\mathcal U(\jmath\omega) 
\end{bmatrix}  \inva\omega \leq 0, 
\end{equation}
and in finite-frequency assumption as 
\begin{equation}\label{gKYP_equ2}
\begin{bmatrix}
 G(\jmath\omega)\\
I   
\end{bmatrix}^*\Pi
\begin{bmatrix}
 G(\jmath\omega)\\
I   
\end{bmatrix}
 \leq 0, ~\forall \omega \in \Omega_f.
\end{equation}

Indeed, by pre-multiplying
$\mathcal U^*(\jmath\omega)$ and post-multiplying  $\mathcal U(\jmath\omega)$ in
\eqref{gKYP_equ2}, we obtain the constrained input-output performance
\eqref{IO-Performance-FD} with respect to finite-frequency $\Omega_f$,
where in both cases, $\Pi\in \mathbb R^{(q+p)\times (q+p)}$ is the
performance index matrix. 
Under the finite-frequency input signal assumption, the desirable input-output performance can be refined in a finite-frequency input specific form.



\begin{assumption}[Input signal with finite-frequency spectrum]\label{def:ff-signals}
  Throughout this paper, the input signals have a limited spectrum. As for the
  notation, 
  a signal $\mathcal{U}\in \Ltfd$ will be
  referred to as a finite-frequency signal with respect to a frequency range
  $\Omega_{f}\subset (-\infty, \infty)$, if 
  \begin{equation}\label{FD-FFS-strict}
    \mathcal{U}(\jmo)=0, ~ \text{for }\omega \notin \Omega_f.
  \end{equation}
\end{assumption}


Since finite-frequency properties carry significant physical meanings, they are often verifiable, and critical frequency parameters (such as the upper and lower bounds) are typically available.

\begin{lemma}[gKYP lemma, \cite{gKYP_iwasaki2005generalized}] 
\label{lem:gKYP-FD}
Consider the LTI system \eqref{LTI-time} with $(A,B)$ controllable, and a finite-frequency range $\Omega_f$ 
with the corresponding weight matrix $\Psi_f$ as defined in \eqref{eq:psif}. Suppose $\Pi \in  \mathbb R^{(q+p)\times (q+p)}$ is Hermitian, and the frequency spectrum of input signal $u\in\mathcal{L}_2$ is limited to finite range $\Omega_f$. 
Then, the following statements are equivalent:
\begin{enumerate}[(1)]
  \item
There exist Hermitian matrices $P$, $Q\geq0$, such that
 \begin{equation} \label{gKYP_LMI}
\begin{bmatrix}
				A &  B\\
				I& 0 
 \end{bmatrix} ^*
              (\Theta \otimes P +\Psi_f \otimes Q)    
              \begin{bmatrix}
				A &  B \\
				I & 0
              \end{bmatrix}
      +
      \begin{bmatrix}
                C & D  \\
				0 & I
			\end{bmatrix}^* \Pi 
      \begin{bmatrix}
				C    & D   \\
				0 & I 
			\end{bmatrix}   \leq 0,~{\rm with}~\Theta=\begin{bmatrix}
    0&1\\
    1&0
\end{bmatrix}.
	\end{equation}

  \item
The input-output performance \eqref{IO-Performance-FD} holds.

\end{enumerate}

\end{lemma}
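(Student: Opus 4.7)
The plan is to follow the two-direction route pioneered by Iwasaki and Hara. For the sufficiency direction $(1)\Rightarrow(2)$, I would pre- and post-multiply the LMI \eqref{gKYP_LMI} by $[\xi^*(\jmo),\,I]$ and its conjugate, where $\xi(\jmo):=(\jmo I-A)^{-1}B$. A direct computation gives
\[
\begin{bmatrix}A&B\\I&0\end{bmatrix}\begin{bmatrix}\xi(\jmo)\\I\end{bmatrix}=\begin{bmatrix}\jmo\,\xi(\jmo)\\ \xi(\jmo)\end{bmatrix},\qquad
\begin{bmatrix}C&D\\0&I\end{bmatrix}\begin{bmatrix}\xi(\jmo)\\I\end{bmatrix}=\begin{bmatrix}G(\jmo)\\I\end{bmatrix}.
\]
The Kronecker identity $v^*(M\otimes N)v=(w^*Mw)\cdot \xi^*N\xi$, applied to $v=w\otimes \xi$ with $w=[\jmo,1]^*$, immediately makes the $\Theta\otimes P$ contribution vanish (since $w^*\Theta w\equiv 0$), while the $\Psi_f\otimes Q$ contribution reduces to $f(\omega,\Omega_f)\,\xi^*Q\xi$, which is non-negative on $\Omega_f$ by $Q\geq 0$ and \eqref{eq:f-positive-inband}. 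Hence the pointwise FDI \eqref{gKYP_equ2} follows on $\Omega_f$; pre/post-multiplying by $\mathcal U^*(\jmo)$ and $\mathcal U(\jmo)$, integrating, and invoking Assumption~\ref{def:ff-signals} (so that $\mathcal U\equiv 0$ outside $\Omega_f$) then yields \eqref{IO-Performance-FD}.

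Conversely, for $(2)\Rightarrow(1)$, I would first promote the integral inequality \eqref{IO-Performance-FD} to the pointwise FDI \eqref{gKYP_equ2} by a test-input argument: since \eqref{IO-Performance-FD} holds for every $\mathcal U\in\Htff$, choosing approximate-impulse inputs concentrated around an arbitrary $\omega_{0}\in\Omega_f$ and exploiting the continuity in $\omega$ of the integrand forces \eqref{gKYP_equ2} pointwise. The core step is then to convert this $\omega$-parameterized Hermitian matrix inequality into the $\omega$-free LMI \eqref{gKYP_LMI}, for which I would invoke a \emph{lossless generalized S-procedure}. The state-space constraint $\jmo\xi(\jmo)=A\xi(\jmo)+B$ is an equality, and thus gives rise to a Hermitian but sign-indefinite multiplier $P$ (corresponding to $\Theta$); the inequality $f(\omega,\Omega_{f})\geq 0$ encoding $\omega\in\Omega_{f}$ is sign-definite, and thus gives rise to a multiplier $Q\geq 0$ (corresponding to $\Psi_{f}$). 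Convex duality produces multipliers of exactly the structure appearing on the left of \eqref{gKYP_LMI}.

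The hard part will be establishing the \emph{losslessness} of this S-procedure, i.e.\ that the multipliers furnished by duality actually satisfy \eqref{gKYP_LMI} rather than merely a relaxation. Losslessness depends on the convex cone generated by the Hermitian matrices $\begin{bmatrix}\xi(\jmo)\\I\end{bmatrix}\begin{bmatrix}\xi(\jmo)\\I\end{bmatrix}^{*}$ (for $\omega$ ranging over $\Omega_{f}$) having the correct dual structure, and this is precisely where the \emph{controllability} of $(A,B)$ enters: it ensures that $\xi(\jmo)$ sweeps out a rich enough subspace of $\mathbb{R}^{n}$ to identify primal and dual cones. I would therefore invoke (or reproduce) the convex-hull/rank argument of \cite{gKYP_iwasaki2005generalized}, which handles each of the three canonical weighting matrices $\Psi_{f}$ in \eqref{eq:psif} (low, middle, and high frequency). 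Once losslessness is available, the existence of $P$ Hermitian and $Q\geq 0$ satisfying \eqref{gKYP_LMI} is recovered and the equivalence $(1)\Leftrightarrow(2)$ is closed.
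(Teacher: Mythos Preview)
The paper does not supply a proof of this lemma: it is stated as a citation of the original Iwasaki--Hara result \cite{gKYP_iwasaki2005generalized} and no argument is given in the body of the paper. Your proposal is therefore not being compared against anything the authors wrote; rather, you have (correctly) sketched the original Iwasaki--Hara proof that the paper is quoting. The sufficiency direction $(1)\Rightarrow(2)$ via pre/post-multiplication by $[\xi^*(\jmo),I]$ and the Kronecker identity is exactly the standard computation, and your outline of the necessity direction $(2)\Rightarrow(1)$ through the lossless S-procedure, with controllability providing the rank/convex-hull condition, matches the route in \cite{gKYP_iwasaki2005generalized}. Nothing is missing from your sketch relative to what the paper contains, since the paper contains nothing beyond the citation.
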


We note that the KYP lemma does not facilitate {\it a priori} frequency conditions on the
signals, so it was the fundamental work on the gKYP lemma in the 2000s (see, e.g.,
\cite{gKYP_iwasaki2005generalized}) that triggered 
various and promising works on finite-frequency oriented analysis and synthesis problems for linear systems \cite{Cai2024H,Sun2020advanced}. 

Since the gKYP lemma (also in its time domain version \cite{gKYP_iwasaki2005time})
is key to many works in the last two decades as well as to our approach, we give a complete recall.

\subsection{Time domain interpretation of finite-frequency input signal, gKYP lemma, version 2}

Consider the LTI system \eqref{LTI-time} with an initial state $x_0=x(t_0)$, the unique solution of the system has the representation
\begin{equation}
    x(t)=e^{A(t-t_0)}x_0+\int_0^t e^{A (t-\tau)} B u(\tau) \inva\tau,~t\geq t_0.
\end{equation}
Then for the initial state is zero and a finite-frequency input signal, i.e.,
\begin{equation}\label{eq:TD-IP}
\begin{array}{l}
		u(t)= 
        \int_{\Omega_f} \mathcal U(\jmath\omega)e^{\jmath\omega t} \inva\omega,
\end{array}
\end{equation}
the state vector and its derivative vector are given by
\begin{equation}\label{x-dx-LTI}
\begin{array}{l}
	x(t)
   = \int_{\Omega_f} (\mathcal{V}_1(\jmath\omega) +\mathcal{V}_2(\jmath\omega)) \mathcal U(\jmath\omega) \inva\omega,
    \\
\dot x(t)
=\int_{\Omega_f}
    (\jmath\omega \mathcal{V}_1(\jmath\omega) + A \mathcal{V}_2(\jmath\omega)) \mathcal U(\jmath\omega) \inva\omega,
\end{array}
\end{equation}
with
\begin{equation}\label{V-LTI}\begin{array}{l}
   \mathcal{V}_1(\jmath\omega) =   
   e^{\jmath\omega t} (\jmath\omega I-A)^{-1} B,
   \\
   \mathcal{V}_2(\jmath\omega) 
   =-e^{At} (\jmath\omega I-A)^{-1} B.
\end{array}
\end{equation}






In the following, we will argue the non-negativity of $\Omega_f$-IQC rendering signals for general LTI systems. For that, we recall the concept of state transition matrices $e^{At}$ of LTI system as well as its associated uniform boundedness properties as discussed in \cite{gKYP_iwasaki2005time}.

\begin{lemma}[Boundedness properties of LTI
  systems, \cite{gKYP_iwasaki2005time}] 
  \label{UB_LTI}
Consider the LTI system \eqref{LTI-time} and suppose that $A$ is Hurwitz. Then
for any given square matrix $M$ and all $t\geq 0$, we have 
\begin{enumerate}[(1)]
  \item
$\| e^{A t} \| \leq \sqrt{{\rm cond}(P)}$,
\item 		
$\|\int_{0}^{t} e^{A \tau} \inva\tau \|
		\leq  \| A^{-1} \| (1+\sqrt{{\rm cond}(P)})$,
\item
  $\| \int_{0}^{t} e^{A^* \tau} M  e^{A \tau} \inva\tau \|
		\leq \| P \| \frac{\| M+M^*\|+\| M-M^*\|}{2}$,
\end{enumerate}  
where $P:=\int_0^\infty e^{A^* t} e^{At} \inva t$, and ${\rm
cond}(P)=\frac{\lambda_{\max}(\int_{0}^{\infty} e^{A^* t} M  e^{A t} \inva t)
}{\lambda_{\min}(\int_{0}^{\infty} e^{A^* t} M  e^{A t} \inva t) }$, i.e., the
fraction of the largest versus the smallest eigenvalue of the symmetric positive definite Gramian.
\end{lemma}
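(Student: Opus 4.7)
The plan is to leverage the Lyapunov structure induced by the Gramian $P=\int_0^\infty e^{A^*t}e^{At}\inva t$. Since $A$ is Hurwitz, $P$ is well defined and positive definite, and a brief integration by parts (writing the integrand as $\tfrac{d}{dt}(e^{A^*t}e^{At})$ and evaluating the boundary terms) yields the Lyapunov identity $A^*P+PA+I=0$.

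For statement (1), I would work with the quadratic functional $V(x)=x^*Px$ along the unforced trajectory $x(t)=e^{At}x_0$. The Lyapunov identity gives $\tfrac{d}{dt}V(x(t))=-\|x(t)\|^2\le 0$, so $V$ is non-increasing and hence $x_0^* e^{A^*t}Pe^{At}x_0\le x_0^*Px_0$. Sandwiching between $\lambda_{\min}(P)\|e^{At}x_0\|^2$ and $\lambda_{\max}(P)\|x_0\|^2$ and taking the supremum over unit-norm $x_0$ delivers $\|e^{At}\|^2\le\lambda_{\max}(P)/\lambda_{\min}(P)={\rm cond}(P)$. Statement (2) then follows immediately: as $A$ is Hurwitz and thus invertible, the closed-form primitive $\int_0^t e^{A\tau}\inva\tau = A^{-1}(e^{At}-I)$ is available, so the triangle inequality combined with (1) produces the bound $\|A^{-1}\|(1+\sqrt{{\rm cond}(P)})$.

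For statement (3), I would decompose $M=M_s+M_a$ with $M_s=(M+M^*)/2$ Hermitian and $M_a=(M-M^*)/2$ skew-Hermitian, and split the integral correspondingly. Applying the operator inequality $-\|M_s\|I\le M_s\le \|M_s\|I$, conjugating by $e^{A\tau}$, and integrating over $[0,t]$ yields a Hermitian matrix sandwiched by $\pm\|M_s\|\,P_t$ in the Loewner order, where $P_t:=\int_0^t e^{A^*\tau}e^{A\tau}\inva\tau$. Since $P-P_t$ is positive semi-definite for every finite $t$, one has $\|P_t\|\le\|P\|$, and the Hermitian contribution is at most $\|P\|\,\|M_s\|$. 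Writing $M_a=\jmath H$ with $H$ Hermitian of the same norm reproduces the same argument for the skew-Hermitian part, contributing $\|P\|\,\|M_a\|$. Adding the two estimates and using $\|M_s\|=\|M+M^*\|/2$ and $\|M_a\|=\|M-M^*\|/2$ gives the stated inequality.

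The main subtlety I expect lies in (3): the operator-inequality sandwich is immediate for the Hermitian part $M_s$, but one must carefully justify that the triangle inequality across the Hermitian/skew-Hermitian decomposition does not introduce a spurious constant, and that the factor $\jmath$ can be absorbed cleanly via the substitution $M_a=\jmath H$. Parts (1) and (2) are essentially bookkeeping around the Lyapunov identity, so once $P$ is identified as the natural tool the whole argument reduces to standard norm estimates.
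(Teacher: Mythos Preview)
Your argument is correct in all three parts: the Lyapunov identity $A^*P+PA+I=0$ yields (1) via the standard monotonicity of $x^*Px$ along trajectories, (2) is then immediate from the closed-form primitive $A^{-1}(e^{At}-I)$, and the Hermitian/skew-Hermitian split together with the Loewner sandwich $-\|M_s\|P_t\le\int_0^t e^{A^*\tau}M_s e^{A\tau}\inva\tau\le\|M_s\|P_t$ and the substitution $M_a=\jmath H$ handles (3) cleanly.

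There is, however, nothing to compare against: the paper does not supply its own proof of this lemma. It is stated as a quoted result from \cite{gKYP_iwasaki2005time} and used as a black box to justify the uniform-boundedness claims in Proposition~\ref{gramina_LTI}. So your write-up is not a different route from the paper's proof; it is simply \emph{a} proof where the paper offers none. If you want to match the level of detail in the cited source, the argument there is essentially the one you give, so nothing further is required.
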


Inspired by the boundedness properties of the state transition matrix of LTI system, the inner connection between the finite-frequency controllability Gramian matrix and the state vector is presented as follows:


\begin{definition}[Finite-frequency controllability Gramian, LTI system] \label{con-Gramian-LTI}
Consider the LTI system \eqref{LPV-time} with $(A, B)$ controllable and a
finite-frequency spectrum $\Omega_f$. Then we define: 

\begin{enumerate}[(1)]
  \item  the finite-frequency controllability Gramian as:

\begin{equation}
   \begin{array}{l}
\mathbb{W}(\Omega_f)
=\int_{\Omega_f} 
 \underbrace{(\jmath\omega I-A)^{-1}B e^{\jmath\omega t}
 }_{\mathcal{V}_1(\jmath\omega)}
  \underbrace{e^{-\jmath\omega t} 
 B^*(\jmath\omega I-A)^{-*}
 }_{(\mathcal{V}_1(\jmath\omega))^*}\inva\omega,
   \end{array}
\end{equation}

\item and the state transition matrix-weighted finite-frequency controllability Gramian as: 
\begin{equation}
   \begin{array}{l}
\hat{\mathbb{W}}(\Omega_f)
=\int_{\Omega_f} 
 \underbrace{ e^{At} (\jmath\omega I-A)^{-1}B }_{\mathcal{V}_2(\jmath\omega)}
  \underbrace{
 B^*(\jmath\omega I-A)^{-*} e^{A^*t}
 }_{(\mathcal{V}_2(\jmath\omega))^*}\inva\omega.
   \end{array}
\end{equation}

\end{enumerate}

\end{definition}

Note that these formulas express the well-known controllability Gramian in
system and control considered in a finite-frequency settings.


\begin{proposition}[Finite-frequency controllability
  Gramian and state vector, LTI system case] \label{gramina_LTI}
Consider the finite-frequency controllability Gramian defined in Definition
\ref{con-Gramian-LTI}. Then for a sufficiently long time horizon $t$, the following conclusion holds:  
\begin{enumerate}[(1)]
    \item 
    $\int_0^t \Bigl(
 \iint\limits_{\Omega_f\times \Omega_f} 
\He\Bigl(\mathcal V_1(\jmath\omega)  
(\mathcal V_1(\jmath\tilde\omega))^* \Bigl) 
\inva\omega \inva\tilde{\omega} \Bigr) \inva t$
\\
$= \int_0^t \Bigl( {\mathbb{W}(\Omega_f)} \Bigr)\inva t 
    +\int_0^t \Bigl( \iint\limits_{\Omega_f\times\Omega_f}
    {e^{(\jmath(\omega-\tilde\omega) t} }
    (\jmath\omega I-A)^{-1}  B
    B^* (\jmath\tilde\omega I-A)^{-*} 
   \inva\omega\inva\tilde\omega
    \Bigr)\inva t,$
\\

\item 
$ \int_0^t \Bigl(
 \iint\limits_{\Omega_f\times \Omega_f} 
\He\Bigl(\mathcal V_2(\jmath\omega)  
(\mathcal V_2(\jmath\tilde\omega))^* \Bigl) 
\inva\omega \inva\tilde{\omega} \Bigr) \inva t$
\\
$=   \int_0^t \Bigl( \hat{\mathbb{W}}(\Omega_f) \Bigr)\inva t 
    +\int_0^t \Bigl( \iint\limits_{\Omega_f\times\Omega_f}
     e^{At}
    (\jmath\omega I-A)^{-1}  B
    B^* (\jmath\tilde\omega I-A)^{-*}  {e^{A^*t}}
   \inva\omega\inva\tilde\omega
    \Bigr)\inva t$.

\end{enumerate}

In particular, for AS LPV system, we have 
\begin{enumerate}[(1)]
    \item 
$\lim\limits_{t\to \infty} \frac 1t {\int_0^t  \Bigl(
 \iint\limits_{\Omega_f\times \Omega_f} 
\He\Bigl(\mathcal V_1(\jmath\omega)
(\mathcal V_1(\jmath\tilde\omega))^* \Bigl) 
\inva\omega \inva\tilde{\omega} \Bigr) \inva t}
={\mathbb{W}(\Omega_f)},$

\item
  $\lim\limits_{t\to \infty} \frac{1}{t} {\int_0^t  \Bigl(
 \iint\limits_{\Omega_f\times \Omega_f} 
\He\Bigl(\mathcal V_2(\jmath\omega)
(\mathcal V_2(\jmath\tilde\omega))^* \Bigl) 
\inva\omega \inva\tilde{\omega} \Bigr) \inva t}={0},$

\item
$\lim\limits_{t\to \infty} \frac{1}{t} {\int_0^t  \Bigl(
 \iint\limits_{\Omega_f\times \Omega_f} 
\He\Bigl(\mathcal V_1(\jmath\omega)
(\mathcal V_2(\jmath\tilde\omega))^* \Bigl) 
\inva\omega \inva\tilde{\omega} \Bigr) \inva t}={0},$

\end{enumerate}

where $\mathcal V_1(\jmath\omega)$ and $\mathcal V_2(\jmath\omega)$ are defined
as in \eqref{V-LTI}.

\end{proposition}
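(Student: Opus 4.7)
The plan is to establish the finite-horizon equalities (1)--(2) by direct algebraic expansion of the integrands, and then derive the asymptotic limits by combining those decompositions with an $L^2$-boundedness argument (for the $\mathcal V_1\mathcal V_1^*$ term) and the exponential-decay bounds from Lemma \ref{UB_LTI} (for the $\mathcal V_2$-involving terms).

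For the finite-horizon identities, I substitute \eqref{V-LTI} into $\mathcal V_i(\jmath\omega)\mathcal V_j(\jmath\tilde\omega)^*$ and use that $(\jmath\omega I - A)^{-1}$ commutes with $e^{At}$ to obtain the kernels
$\mathcal V_1(\jmath\omega)\mathcal V_1(\jmath\tilde\omega)^* = e^{\jmath(\omega-\tilde\omega)t}\,M(\omega,\tilde\omega)$
and
$\mathcal V_2(\jmath\omega)\mathcal V_2(\jmath\tilde\omega)^* = e^{At}\,M(\omega,\tilde\omega)\,e^{A^*t}$,
with $M(\omega,\tilde\omega):=(\jmath\omega I - A)^{-1}BB^*(\jmath\tilde\omega I - A)^{-*}$. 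Taking the Hermitian part, integrating over $\Omega_f\times\Omega_f$, and separating the diagonal contribution $\omega=\tilde\omega$ (on which the phase factor reduces to unity and the density collapses to the integrand of $\mathbb W(\Omega_f)$, respectively $\hat{\mathbb W}(\Omega_f)$) from the off-diagonal residual yields the two decompositions in (1) and (2). In the $\mathcal V_2$-case the factors $e^{At}$ and $e^{A^*t}$ can be pulled outside the frequency integral before the split.

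For the asymptotic limits, I start from the decompositions just derived and show that the off-diagonal/oscillatory remainder has a $\tfrac{1}{t}$ time average tending to zero, so only the time-independent Gramian term survives. For limit (1) the key is the factorization
$\iint_{\Omega_f\times\Omega_f} e^{\jmath(\omega-\tilde\omega)s}M(\omega,\tilde\omega)\,\inva\omega\inva\tilde\omega = g(s)g(s)^*$,
where $g(s):=\int_{\Omega_f}e^{\jmath\omega s}(\jmath\omega I - A)^{-1}B\,\inva\omega$. Since $(\jmath\omega I - A)^{-1}B$ is bounded on the compact set $\Omega_f$, Plancherel's theorem gives that $g$ is square integrable, hence $\int_0^\infty g(s)g(s)^*\,\inva s$ is finite and $\tfrac{1}{t}\int_0^t g(s) g(s)^*\,\inva s \to 0$; the limit then equals $\mathbb W(\Omega_f)$. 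For limits (2) and (3), the integrand carries at least one factor of $e^{At}$ or $e^{A^*t}$; by Lemma \ref{UB_LTI}, $\|e^{At}\|$ is exponentially decaying, so the integrand is uniformly $O(e^{-\alpha t})$ in $(\omega,\tilde\omega)$, its time integral is uniformly bounded in $t$, and the $\tfrac{1}{t}$-average tends to zero.

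The main obstacle is limit (1): the doubly oscillatory kernel $e^{\jmath(\omega-\tilde\omega)s}$ is not absolutely integrable in $s$, so a direct Riemann--Lebesgue or dominated-convergence argument on the kernel itself fails. The factorization into $g(s)g(s)^*$ is what unlocks the proof---it recasts the doubly oscillatory kernel as a pointwise positive-semidefinite matrix-valued function of $s$ alone, whose finite total $L^1$-mass (granted by $g\in L^2$ via Plancherel together with Cauchy--Schwarz) forces the Ces\`aro average to vanish.
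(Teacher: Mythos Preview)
Your handling of limits (2) and (3) via the exponential decay of $e^{At}$ from Lemma~\ref{UB_LTI} is correct and in line with the paper, whose proof simply invokes the uniform boundedness of $e^{\jmath\omega t}$ and $e^{At}$ and defers all details to \cite[Th.~3]{gKYP_iwasaki2005time}.

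There is, however, a genuine gap in your derivation of the finite-horizon identities, and it propagates into your argument for limit~(1). You obtain the decomposition by ``separating the diagonal contribution $\omega=\tilde\omega$ \ldots\ from the off-diagonal residual'' in $\iint_{\Omega_f\times\Omega_f}$. But $\Omega_f\times\Omega_f$ is a two-dimensional set and the diagonal $\{\omega=\tilde\omega\}$ has Lebesgue measure zero, so isolating it contributes nothing to the double integral; there is no mechanism by which the single-integral Gramian $\mathbb W(\Omega_f)=\int_{\Omega_f}M(\omega,\omega)\,\inva\omega$ can emerge from that split. Indeed, your own factorization already gives
$\iint_{\Omega_f\times\Omega_f}\He\bigl(\mathcal V_1(\jmath\omega)\mathcal V_1(\jmath\tilde\omega)^*\bigr)\,\inva\omega\,\inva\tilde\omega = 2\,g(s)g(s)^*$
with no separate $\mathbb W(\Omega_f)$ term, and your Plancherel argument then forces $\tfrac{1}{t}\int_0^t 2\,g(s)g(s)^*\,\inva s\to 0$, not $\mathbb W(\Omega_f)$. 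The paper does not supply a self-contained argument here either---it merely cites Iwasaki--Hara---but your diagonal-separation step, as written, cannot produce the claimed decomposition.
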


\begin{proof}

This conclusion follows directly from the uniform boundedness of
$e^{\jmath\omega t}$ and the \emph{state transition matrix} $e^{At}$.  
For more details, please refer to the proof of \cite[Th. 3]{gKYP_iwasaki2005time}.

\end{proof}

Now, we can explicitly state the one-to-one relation between finite-frequency inputs and an 
IQC condition. This result was used
in the proof of \cite[Th. 3]{gKYP_iwasaki2005time}. Here we state it explicitly
and modified to our purposes as follows:


\begin{proposition}
    [Finite-frequency signals and non-negative $\Omega_f$-IQC, LTI system] 
    \label{prop:IQC-LTI}
Consider the LTI system \eqref{LTI-time} with $A$ Hurwitz and $(A, B)$ controllable. If the input signal $u$ is a finite-frequency signal with respect to $\Omega_f$, 
then the input signal $u\in \mathcal{L}_2$ renders the excited system state $x$ and its derivative $\dot x$ satisfying the following $\Omega_f$-IQC:
 \begin{equation}\label{s_IQC_LTI}
 \int_0^{\infty} 
   \He\Bigl(
   \begin{bmatrix} 
    \dot x^*(t)& x^*(t) 
    \end{bmatrix} 
    \Psi_f 
    \begin{bmatrix} 
    \dot x(t)\\ 
    x(t) 
    \end{bmatrix} 
    \Bigl)
    \inva t \geq 0,
	\end{equation}
where the corresponding weight matrix $\Psi_f$ is defined in \eqref{eq:psif}.

\end{proposition}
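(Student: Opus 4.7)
The plan is to show non-negativity of the $\Omega_f$-IQC by substituting the explicit decomposition \eqref{x-dx-LTI} into the quadratic form under the integral and reducing to a frequency-domain quantity that is manifestly non-negative by \eqref{eq:f-positive-inband}.

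The key structural observation is that, since $A$ is Hurwitz, $(\jmo I - A)$ is invertible for every $\omega \in \mathbb R$, and the $\mathcal V_1$-kernel from \eqref{V-LTI} encodes precisely the bandlimited stationary frequency response $e^{\jmo t}(\jmo I - A)^{-1}B$ whose support is inherited from that of $\mathcal U$. Substituting both $\dot x$ and $x$ from \eqref{x-dx-LTI} into the quadratic form $[\dot x^*,\,x^*]\,\Psi_f\,[\dot x;\,x]$ and using the Kronecker structure $\Psi_f\otimes I_n$, one obtains a double integral over $\Omega_f\times\Omega_f$ whose principal diagonal part carries the scalar factor
\begin{equation*}
\begin{bmatrix}\jmo\\ 1\end{bmatrix}^{\!*}\Psi_f\begin{bmatrix}\jmo\\ 1\end{bmatrix} = f(\omega,\Omega_f) \ge 0 \qquad \text{on } \Omega_f,
\end{equation*}
so that, after the inner frequency integration and the time integration, it collapses via the Parseval identity \eqref{eq:Parseval} to a non-negative quantity in $\mathcal U$.

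The remaining off-diagonal and $\mathcal V_2$-blocks in the expansion each carry either an oscillatory factor $e^{\jmath(\omega-\tilde\omega)t}$ or an exponentially decaying factor $e^{At}$ or $e^{A^*t}$. Their time-integrated contributions are handled via the Hurwitz bounds of Lemma \ref{UB_LTI} together with the three limit identities of Proposition \ref{gramina_LTI}: the $\mathcal V_2\mathcal V_2^*$ and the cross $\mathcal V_1\mathcal V_2^*$ blocks vanish in the time-averaged sense, while the off-diagonal part of the $\mathcal V_1\mathcal V_1^*$ block reduces to the finite-frequency controllability Gramian $\mathbb W(\Omega_f)$ of Definition \ref{con-Gramian-LTI}. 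Combining these estimates with the positive diagonal contribution yields \eqref{s_IQC_LTI}.

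The main technical obstacle I anticipate is the careful bookkeeping of the four blocks in the expansion and reconciling the non-averaged $\int_0^\infty$ integration in \eqref{s_IQC_LTI} with the time-averaged limits supplied by Proposition \ref{gramina_LTI}; this is, however, exactly the template of the proof of \cite[Th. 3]{gKYP_iwasaki2005time} that Proposition \ref{gramina_LTI} is designed to feed into.
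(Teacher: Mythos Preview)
Your proposal is correct and follows essentially the same route as the paper: expand the IQC integrand via the $\mathcal V_1/\mathcal V_2$ decomposition \eqref{x-dx-LTI}, isolate the diagonal $\mathcal V_1\mathcal V_1^*$ contribution carrying the non-negative factor $f(\omega,\Omega_f)$ from \eqref{eq:f-positive-inband}, and dispose of the remaining blocks via the uniform boundedness furnished by Lemma~\ref{UB_LTI} and Proposition~\ref{gramina_LTI}. One small slip: it is the \emph{full} time-averaged $\mathcal V_1\mathcal V_1^*$ double integral that reduces to $\mathbb W(\Omega_f)$ (Proposition~\ref{gramina_LTI}(1)), so the off-diagonal part vanishes rather than ``reduces to $\mathbb W(\Omega_f)$''; and the paper does not invoke Parseval here but rather argues directly that the non-negative diagonal term grows unboundedly in $t$ while the other three terms remain uniformly bounded---this is precisely the reconciliation of the non-averaged integral with the averaged limits that you flagged as the main technical point.
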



\begin{proof}
According to the property of trace, it is easy to see that \eqref{s_IQC_LTI} holds if and only if
\begin{equation}
    \int_0^\infty \tr\bigg(  \He\Bigl(
    \begin{bmatrix} 
    \dot x(t)& x(t) 
    \end{bmatrix} 
    \Psi_f 
    \begin{bmatrix} 
    \dot x^*(t)\\ 
    x^*(t) 
    \end{bmatrix} \Bigl)\bigg) \inva t \geq 0,
\end{equation}
where
\begin{equation}\label{IQC-eqn1}
\begin{array}{l}
\tr\bigg(  \He\Bigl(
\begin{bmatrix} 
    \dot x(t)& x(t) 
    \end{bmatrix} 
    \Psi_f 
    \begin{bmatrix} 
    \dot x^*(t)\\ 
    x^*(t) 
    \end{bmatrix} 
\Bigl)\bigg)
\\
= 
\tr\bigg(  \He\Bigl(
\begin{bmatrix} 
    \dot x(t)& x(t) 
    \end{bmatrix} 
    \begin{bmatrix}
        \Psi_f^{11} & \Psi_f^{12} \\
        \Psi_f^{21} & \Psi_f^{22}
    \end{bmatrix} 
    \begin{bmatrix} 
    \dot x^*(t)\\ 
    x^*(t) 
    \end{bmatrix} 
\Bigl)\bigg)
\\
=
\iint\limits_{\Omega_f\times\Omega_f}  
   \tr\bigg(  \He\Bigl(
   \Psi_f^{11}
   \Bigl(\jmath\omega \mathcal{V}_1(\jmath\omega) + A \mathcal{V}_2(\jmath\omega)\Bigl) \mathcal U(\jmath\omega) \mathcal U^*(\jmath\tilde\omega) 
\Bigl(\jmath\tilde\omega \mathcal{V}_1(\jmath\tilde\omega)
+ A \mathcal{V}_2(\jmath\tilde\omega)\Bigl)^* 
\Bigl)\bigg)
    \inva \omega   \inva \tilde\omega 
\\
+
\iint\limits_{\Omega_f\times\Omega_f}  
   \tr\bigg(  \He\Bigl(
   \Psi_f^{12}
   \Bigl(\jmath\omega \mathcal{V}_1(\jmath\omega)
   + A \mathcal{V}_2(\jmath\omega)\Bigl) \mathcal U(\jmath\omega) \mathcal U^*(\jmath\tilde\omega) 
\Bigl( \mathcal{V}_1(\jmath\tilde\omega)
+ \mathcal{V}_2(\jmath\tilde\omega))\Bigl)^* 
\Bigl)\bigg)
    \inva \omega   \inva \tilde\omega 
\\
+
\iint\limits_{\Omega_f\times\Omega_f}  
   \tr\bigg(  \He\Bigl(
   \Psi_f^{21}
   \Bigl( \mathcal{V}_1(\jmath\omega)
   + \mathcal{V}_2(\jmath\omega)\Bigl) 
   \mathcal U(\jmath\omega)
   \mathcal U^*(\jmath\tilde\omega) 
\Bigl(\jmath\tilde\omega \mathcal{V}_1(\jmath\tilde\omega)
+ A \mathcal{V}_2(\jmath\tilde\omega)\Bigl)^* 
\Bigl)\bigg)
    \inva \omega   \inva \tilde\omega 
\\
+
\iint\limits_{\Omega_f\times\Omega_f}  
   \tr\bigg(  \He\Bigl(
   \Psi_f^{22}
   \Bigl( \mathcal{V}_1(\jmath\omega)
   + \mathcal{V}_2(\jmath\omega)\Bigl) \mathcal U(\jmath\omega)
   \mathcal U^*(\jmath\tilde\omega) 
\Bigl( \mathcal{V}_1(\jmath\tilde\omega)
+ \mathcal{V}_2(\jmath\tilde\omega)\Bigl)^* 
\Bigl)\bigg)
    \inva \omega   \inva \tilde\omega 
    \end{array}
\end{equation}
with $\mathcal V_1$ and $\mathcal V_2$ as defined in \eqref{V-LTI}.

Integrating \eqref{IQC-eqn1} from 0 to $\infty$ and considering Proposition \ref{gramina_LTI}, we obtain 
\begin{equation}\label{IQC-eqn2}
\begin{array}{l}
\int_0^\infty
\tr\bigg(  \He\Bigl(
\begin{bmatrix} 
    \dot x(t)& x(t) 
    \end{bmatrix} 
    \Psi_f 
    \begin{bmatrix} 
    \dot x^*(t)\\ 
    x^*(t) 
    \end{bmatrix} 
\Bigl)\bigg) 
\inva t
\\
=\int_0^\infty 
\tr \Bigl(
\int\limits_{\Omega_f}  
   \underbrace{\begin{bmatrix} 
    \jmath\omega  &     1    
    \end{bmatrix}  \Psi_{f}
    \begin{bmatrix}    
    \jmath\omega &     1
    \end{bmatrix}^*}_{\geq 0}
      \mathcal V_1(\jmath\omega)
      \mathcal U(\jmath\omega)
   \mathcal U^*(\jmath\omega) 
    (\mathcal V_1 (\jmath\omega))^*
    \inva \omega
     \Bigl)
     \inva t
    \\
   +
    \int_0^\infty 
     \tr \Bigl(
   \iint\limits_{\Omega_f\times\Omega_f, \omega\neq \tilde\omega}  
   \He\Bigl(
   \begin{bmatrix} 
    \jmath\omega  &     1    
    \end{bmatrix}  \Psi_{f}
    \begin{bmatrix}    
    \jmath\tilde\omega &     1
    \end{bmatrix}^*
   \mathcal V_1(\jmath\omega) 
     \mathcal U(\jmath\omega)
   \mathcal U^*(\jmath\tilde\omega) 
    (\mathcal V_1 (\jmath\tilde\omega))^*
    \Bigl)
    \inva \omega \inva \tilde \omega
    \Bigl)
    \inva t
    \\
+
\int_0^\infty 
\tr\bigg(
 \iint\limits_{\Omega_f\times\Omega_f}
  \He \Bigl(  
\begin{bmatrix}
    A^*(\mathrm p(t)) & I
\end{bmatrix}

\begin{bmatrix}
   A(\mathrm p(t))\\
   I
\end{bmatrix}
    \mathcal V_2 (\jmath\omega) 
     \mathcal U(\jmath\omega)
   \mathcal U^*(\jmath\tilde\omega) 
    (\mathcal V_2 (\jmath\tilde\omega))^* 
    \Bigl)
    \inva \omega \inva \tilde \omega
    \bigg)
 \inva t
    \\
+ 
\int_0^\infty 
\tr\bigg(
\iint\limits_{\Omega_f\times\Omega_f}
\He \Bigl(
\begin{bmatrix}
    A^*(\mathrm p(t)) & I
\end{bmatrix}
(\Psi_f\otimes I)
\begin{bmatrix}
    \jmath\omega I\\
    I
\end{bmatrix}
    \mathcal V_1 (\jmath\omega) 
     \mathcal U(\jmath\omega)
   \mathcal U^*(\jmath\tilde\omega) 
   (\mathcal V_2 (\jmath\tilde\omega))^* 
    \Bigl)
    \inva \omega \inva \tilde \omega
   \bigg)
 \inva t
    \\
+ 
\int_0^\infty 
\tr\bigg(
\iint\limits_{\Omega_f\times\Omega_f}
\He\Bigl(
\begin{bmatrix}
    -\jmath\tilde\omega I &     I
\end{bmatrix}
(\Psi_f\otimes I)
\begin{bmatrix}
    A(\mathrm p(t)) \\
    I
\end{bmatrix} 
    \mathcal V_2 (\jmath\omega) 
      \mathcal U(\jmath\omega)
   \mathcal U^*(\jmath\tilde\omega) 
   (\mathcal V_1 (\jmath\tilde\omega))^* 
   \Bigl)  
   \inva \omega \inva \tilde \omega
   \bigg)
 \inva t,
    \end{array}
\end{equation}

Combining \eqref{IQC-eqn2} and Proposition \ref{gramina_LTI}, the last three terms are uniformly bounded, then non-negativity of \eqref{s_IQC_LTI} can be verified.

\end{proof}

Proposition \ref{prop:IQC-LTI} is a key result for establishing the gKYP in the
time-domain version of \cite{gKYP_iwasaki2005time}. We highlight these relations
in a diagram below but first recall the second version of the gKYP.


\begin{lemma}[GKYP lemma, Iwasaki and Hara, {\it System $\&$ Control Letter }\cite{gKYP_iwasaki2005generalized}]
\label{lem:gKYP-TD}
Consider the LTI system \eqref{LTI-time} with $A$ Hurwitz and $(A, B)$ controllable, and a finite-frequency range $\Omega_f$ 
with the corresponding weight matrix $\Psi_f$ as defined in \eqref{eq:psif}. Assume $\Pi \in  \mathbb R^{(q+p)\times (q+p)}$ is Hermitian, 
and the finite-frequency input signal $u\in \mathcal{L}_2$
renders the corresponding system's state and its derivative satisfying the $\Omega_f$-IQC in \eqref{s_IQC_LTI} (as in Proposition \ref{prop:IQC-LTI}), 
then the following conclusions are equivalent:

\begin{enumerate}[(1)]
  \item
There exist Hermitian matrices $P$, $Q\geq0$ such that
 \begin{equation} 
\begin{bmatrix}
				A &  B\\
				I& 0 
 \end{bmatrix}^*
              (\Theta \otimes P +\Psi_f \otimes Q)    
              \begin{bmatrix}
				A &  B \\
				I & 0
              \end{bmatrix}
      +
      \begin{bmatrix}
                C & D  \\
				0 & I
			\end{bmatrix}^* \Pi 
      \begin{bmatrix}
				C    & D   \\
				0 & I 
			\end{bmatrix}   \leq 0,~{\rm with}~\Theta=\begin{bmatrix}
    0& 1\\
    1 & 0
\end{bmatrix}.
	\end{equation}

  \item
The input-output performance in time domain \eqref{IO-Performance-TD} holds.

\end{enumerate}

\end{lemma}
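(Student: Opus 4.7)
My plan is to prove the two implications separately, with $(1) \Rightarrow (2)$ handled by a dissipation-inequality argument driven by the IQC hypothesis, and the converse $(2) \Rightarrow (1)$ reduced to the frequency-domain gKYP via Parseval's identity. For $(1) \Rightarrow (2)$, I would congruence-transform the LMI by $[x(t); u(t)]$ on the right and its conjugate transpose on the left, using the substitutions $\begin{bmatrix}A & B \\ I & 0\end{bmatrix}\begin{bmatrix}x \\ u\end{bmatrix} = \begin{bmatrix}\dot{x} \\ x\end{bmatrix}$ and $\begin{bmatrix}C & D \\ 0 & I\end{bmatrix}\begin{bmatrix}x \\ u\end{bmatrix} = \begin{bmatrix}y \\ u\end{bmatrix}$. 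Since $\Theta$ is the off-diagonal swap matrix, the quadratic form involving $\Theta \otimes P$ collapses pointwise to $\tfrac{d}{dt}(x^* P x)$, while the $\Psi_f \otimes Q$ block becomes $[\dot{x}^*, x^*](\Psi_f \otimes Q)[\dot{x}; x]$. Integrating from $0$ to $\infty$, the exact-derivative part contributes the boundary term $[x^* P x]_0^\infty = 0$, using $x(0)=0$ together with $A$ Hurwitz and $u \in \mathcal{L}_2$ to force $x(t)\to 0$. Rewriting the $\Psi_f \otimes Q$ piece via the Kronecker--trace identity $z^*(\Psi_f \otimes Q) z = \tr(Q Z \Psi_f Z^*)$ with $Z := [\dot{x}, x]$, its time integral equals $\tr(Q M_f)$ for $M_f := \int_0^\infty Z(t) \Psi_f Z(t)^* \, dt$. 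The IQC assumption $\He(M_f) \geq 0$ combined with $Q \geq 0$ then yields $\tr(QM_f) \geq 0$, and what remains of the integrated inequality is precisely \eqref{IO-Performance-TD}.

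For $(2) \Rightarrow (1)$, Parseval's identity \eqref{eq:Parseval} lifts the time-domain inequality \eqref{IO-Performance-TD} to its frequency-domain counterpart \eqref{IO-Performance-FD}. By Assumption \ref{def:ff-signals}, $\mathcal{U}(\jmo)$ vanishes outside $\Omega_f$, and since $\mathcal{Y}(\jmo) = G(\jmo)\mathcal{U}(\jmo)$ inherits this support, the full-line integral collapses to the band-limited form \eqref{IO-Performance-FD-FF}. The frequency-domain gKYP, Lemma \ref{lem:gKYP-FD}, then furnishes Hermitian $P$ and $Q \geq 0$ satisfying the LMI.

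The main obstacle is the $\Psi_f \otimes Q$ bookkeeping in the forward direction. Proposition \ref{prop:IQC-LTI} states the IQC as a scalar inequality which a priori only certifies $\tr(\He(M_f)) \geq 0$, whereas absorbing a generic matrix weight $Q \geq 0$ requires the stronger matrix-level $\He(M_f) \geq 0$. To close this gap I would revisit the decomposition appearing in the proof of Proposition \ref{prop:IQC-LTI}: the dominant term after time-averaging is the finite-frequency controllability Gramian $\mathbb{W}(\Omega_f)$, which is positive semidefinite as a matrix (being an integral of outer products $(\jmo I - A)^{-1}BB^*(\jmo I - A)^{-*}$ over $\Omega_f$), while the remaining three terms vanish uniformly in any matrix weighting by the boundedness properties in Lemma \ref{UB_LTI}. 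This promotes the scalar IQC to its matrix form $\He(M_f) \geq 0$, after which $\tr(QM_f) \geq 0$ follows from the standard fact that the trace of the product of two positive-semidefinite Hermitian matrices is non-negative, and the equivalence closes.
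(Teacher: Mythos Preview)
The paper does not supply its own proof of this lemma; it is quoted from Iwasaki and Hara. Your argument for $(1)\Rightarrow(2)$ is exactly the dissipation-inequality template the paper uses to prove its LPV analogue, Theorem~\ref{Extension-LPV-gKYP}: congruence by $[x;u]$, identification of the $\Theta\otimes P$ block with $\tfrac{d}{dt}(x^*Px)$, integration over $[0,\infty)$, and discharge of the $\Psi_f\otimes Q$ contribution via the IQC hypothesis. Your $(2)\Rightarrow(1)$ reduction to the frequency-domain gKYP (Lemma~\ref{lem:gKYP-FD}) through Parseval and the support restriction on $\mathcal U$ is correct and is the natural route; the paper does not spell this direction out anywhere.

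You also isolate a genuine bookkeeping point that the paper's own proof of Theorem~\ref{Extension-LPV-gKYP} glosses over: as stated in~\eqref{s_IQC_LTI} and argued in Proposition~\ref{prop:IQC-LTI}, the IQC is established only at the \emph{trace} level, whereas absorbing $\int_0^\infty [\dot x;x]^*(\Psi_f\otimes Q)[\dot x;x]\,\mathsf{d}t$ for an arbitrary $Q\geq 0$ requires the \emph{matrix} version $\int_0^\infty [\dot x,\,x]\,\Psi_f\,[\dot x,\,x]^*\,\mathsf{d}t\geq 0$ (which is precisely how the hypothesis is phrased in Lemma~\ref{lemma_LPV_FF}, equation~\eqref{eq:L2_LPV_matrix-ineq}). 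Your fix---promoting the scalar IQC to the matrix one by noting that the time-averaged dominant term in the decomposition of Proposition~\ref{gramina_LTI} is the positive-semidefinite Gramian $\mathbb W(\Omega_f)$ while the remaining cross terms vanish uniformly by Lemma~\ref{UB_LTI}---is exactly the step needed to make the $Q$-weighted inequality go through, and it is more careful than what the paper writes.
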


The relationship between the input signals and the input-output performance
involved in KYP lemma \ref{lem:KYP}, and the relationship between the input
signals, time domain interpretation as well as the input-output performance
involved in the frequency domain gKYP lemma \ref{lem:gKYP-FD} and the time
domain gKYP Lemma \ref{lem:gKYP-TD} is schematically illustrated in Fig. \ref{fig:IO-LTI-diagram}.

\begin{figure}[htp]
\centering
    \includegraphics[width=1\linewidth]{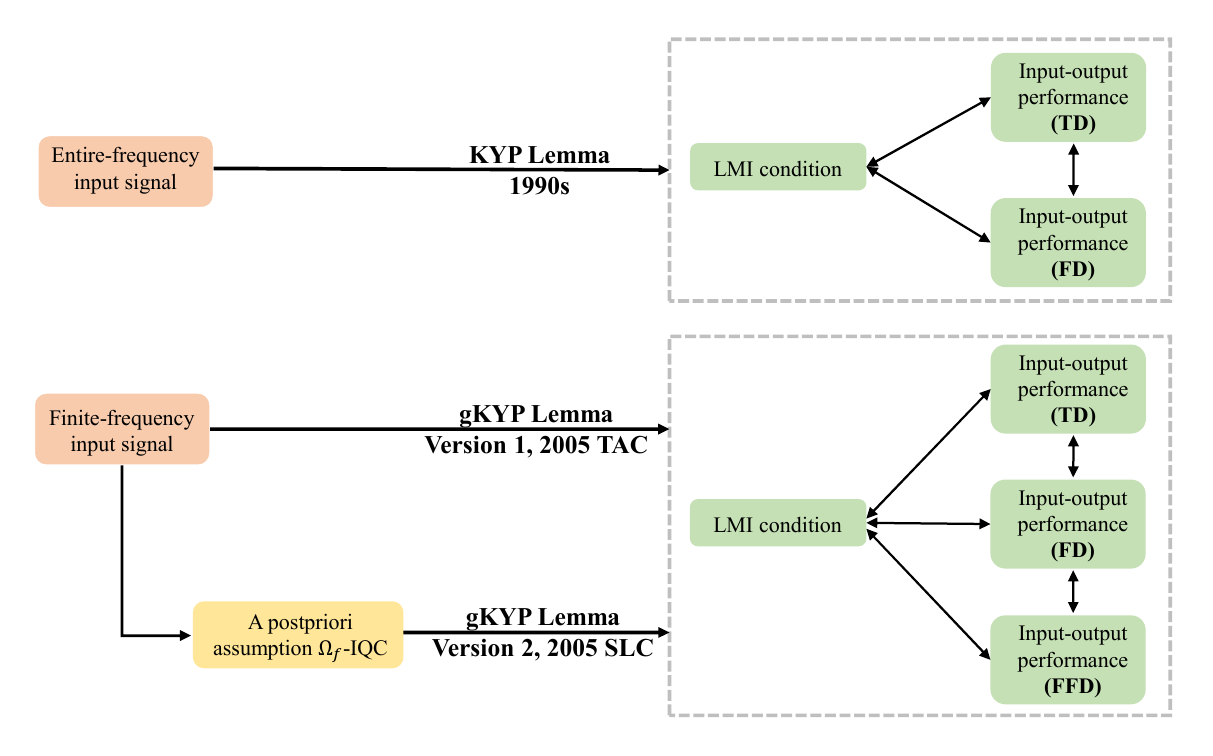}
    \caption{Overview on the two gKYP approaches to finite-frequency analysis of
    general LTI systems in finite-frequency domain (FFD), (entire) frequency domain (FD) and time domain (TD) as well as
  the intermediate \emph{a posteriori} condition of Proposition
\ref{prop:IQC-LTI}. For comparison, we include the entire frequency domain KYP.}
    \label{fig:IO-LTI-diagram}
    \end{figure}

\section{LPV systems, existing attempts, wrong points}
\label{sec:LPV}

\subsection{LPV systems, notions, preliminaries, stability}

We now consider LPV systems of the form
\begin{equation}\label{LPV-time}
	\begin{split}
		\dot{x}(t) &= A (\mathrm p(t))x(t)+B (\mathrm p(t)) u(t),\\
		y(t) &= C (\mathrm p(t))x(t)+D (\mathrm p(t)) u(t),
	\end{split}
\end{equation}
where $\mathrm p$ is a continuously differentiable time-varying parameter curve in $\mathbb R^l$.
We assume that the parameter $\mathrm p:=(\mathrm p_1~ \mathrm p_2 \dotsc \mathrm p_l)\in
\mathbb {R}^l$ and its rate $\dot{\mathrm p}(t):=(\dot{\mathrm p}_1~\dot{\mathrm p}_2 \dotsc
\dot{\mathrm p}_l)\in \mathbb {R}^l$ are bounded with known upper bounds and
lower bounds on the parameter $\mathrm p$ and its rate of change $\dot{\mathrm p}$.

\begin{definition}[Uniform spectral radius]
The uniform spectral radius of a time-varying matrix 
$A(\mathrm p(t))\in\mathbb{C}^{n\times n}$ over a parameter set $\mathcal{P}$ is defined as:
\[
  \rho_{\rm unif}(A(\mathrm p(t))) = \sup\limits_{\rho\in\mathcal{P}} \rho(A(\mathrm p(t)))
  = \sup\limits_{t \geq 0}\max\limits_{i}|\lambda_i(A(\mathrm p(t)))|.
\]

\end{definition}

In many works, some of the assumptions mentioned for the plant are directly related to the \emph{bounded-input-bounded-state} (BIBS)/\emph{bounded-input-bounded-output} (BIBO) stable \cite{UAS_Rugh1996linear}, \emph{input-to-state stability} (ISS) \cite{IS_Sahan2024UAS}, \emph{input-to-output stability} (IOS) \cite{IO_Karafyllis2021lyapunov}, \emph{{uniformly asymptotically stable (UAS)}}\cite{UAS_zhou2015on}, and also system \emph{controllability/observability} \cite{UAS_Rugh1996linear}, which
are at the basis of important stability properties for systems with input
signals.
However, unlike LTI systems, where the notions of stability are equivalent, see,
e.g., \cite{UAS_Rugh1996linear}, 
e.g. asymptotically stable LPV systems may not be uniformly asymptotically
stable (UAS); see, e.g., the
examples in \cite{UAS_importance_1, UAS_importance_2}.
We recall the basic definitions that apply to LPV systems in particular:

\begin{definition}
[\cite{UAS_Rugh1996linear}, Def. 12.1] 
\label{def:BIBO}
The LPV system \eqref{LPV-time} is said to be  
   \emph{uniformly bounded-input, bounded-state (BIBS)} stable if for all
   $t_0\geq 0$, $\delta >0$, there exists a positive constant $\epsilon$, such
   that
   \[\|u(t)\|\leq \delta \Rightarrow  \|x(t)\|\leq \epsilon, ~\forall t\geq t_0.\]

In other words, there exist a finite constant $\eta$ such that the input and output responses satisfies
\[\sup_{t\geq t_0} \|y(t)\| \leq \eta \sup_{t\geq t_0} \|u(t)\|.\]

\end{definition}


\begin{definition}
[
\cite{UAS_zhou2015on}, Def. 1]
\label{def:UAS}
    The autonomous LPV system \eqref{LPV-time} 
    is said to be 
\begin{enumerate}[(1)]
  \item   
\emph{{stable}} if for all $t_0 \geq 0$, $\epsilon >0$, there exists
$\delta(\epsilon,t_0)> 0$, such that
   \[\| x(t_0) \|\triangleq \| x_0 \| < \delta(\epsilon,t_0) \Rightarrow \| x(t) \| < \epsilon,~\forall t\geq t_0;\]
   
\item 
\emph{{uniformly stable  (US)}} if $\delta$ in item (1) is independent of $t_0$;

\item
\emph{{uniformly asymptotically stable (UAS)}} if it is uniformly stable and $\zeta(t_0)$ and $T(t_0,\epsilon)$ in item (3) is independent of $t_0$.
\end{enumerate}
   \end{definition}

We have to confirm that 
the state vector approaches the origin at the speed of the same
level in order to prove that the equilibrium is UAS. Obviously, UAS is much
stricter than the definition of (merely) stable and asymptotically stable. From
a practical perspective, the property of UAS rules out the possibility of having
an arbitrarily slow convergence of solutions to the origin or an arbitrarily
large transient overshoot when initial states are confined to a bounded set. 



\subsection{Existing attempts, inconsistency, 
counterexample}

The insufficiency of the LTI vertex systems-based attempts is due to the inter-modulation between the scheduling time-varying parameter $\mathrm{p}$ and the system state/input/output signals, which is non-negligible in finite-frequency analysis. 
This effect becomes obvious in the following example with $\mathrm{p}(t)=\cos(w_0 t)$ and
exemplary input $u(t) = \sin(w_0t)$, so that in this case the inter-modulation in 
\[\begin{array}{l}
  \dot x(t) = Ax(t) + \mathrm{p}(t)Bu(t) = Ax(t) + B\cos(w_0t)\sin(w_0t) = Ax(t) +
  \frac 12 B\sin(2w_0t),
\end{array}\]
produces a signal of double frequency so that an \emph{out-of-band} response is
straight forward to construct for any limited frequency case. Accordingly, LPV systems come with particular needs unlike LTI systems in which finite
frequency input signals will only produce finite-frequency output signals in the
same domain. 
In fact, many researchers have pointed out that the input-output relation in
frequency domain of LPV system cannot be represented by a simple
transfer function \cite{schoukens2019frequency}. Under mild assumptions on the
scheduling parameter, several relevant concepts including frozen frequency
response function, instantaneous frequency response function and harmonic
frequency response function on the basis of the impulse response or the time-dependent impulse response had been reported
\cite{FRF_toth2010modeling, FRF_Louarroudi2014frequency}. To comprehensively
characterize the frequency response of general LPV systems, one may resort to
the recently established Volterra-series representation based sequence
of generalized transfer function(s); see, e.g., \cite{gosea2021reduced}
and the references therein.

Also in the time domain, the gKYP
lemma in the form of \cite{gKYP_iwasaki2005time} provides a referable and
utilizable entrypoint for finite-frequency analysis of LPV systems. 
The first attempt on this path was presented in \cite{S_ding2010fuzzy} for
discrete-time fuzzy systems, which is a special form of LPV systems with polytopic
parameter dependencies. There, the authors introduced
a discrete-time matrix-valued IQC to represent the
admissible input signals, which achieves positivity of the discrete-time matrix-valued IQC.
Following a similar routine, the works \cite{S_sun2013induced, S_Wang2017H,S_baar2020parameter,S_li2014fault} derived the counterparts of \cite{S_ding2010fuzzy} for
continuous-time LPV systems, wherein sufficient LMI conditions are given for
input-output performance analysis. For comparison, the main results of
\cite{S_sun2013induced} and \cite{S_baar2020parameter} are consolidated with our
notation and included as the following lemma.

\begin{lemma}\label{lemma_LPV_FF}
Assume the LPV system \eqref{LPV-time} is AS and consider inputs with on a
finite frequency range $\Omega_f$. 
Let the following $\Omega_f$-IQC hold:
\begin{equation}
\label{eq:L2_LPV_matrix-ineq}
    \int_0^{\infty}  
    \begin{bmatrix}
     \dot x(t) & x(t)\\ 
    \end{bmatrix} \Psi_f \begin{bmatrix}
        \dot x^*(t) 
        \\ x^*(t)      
    \end{bmatrix} \inva t \geq 0.
\end{equation}
Then, if there exists a Hermitian matrix $Q>0$, $P(\mathrm p(t))$ and a differentiable map $P\colon \mathbb R^{r}\to \mathbb R^{n\times
  n}$ so that
 \begin{equation}\label{L2_FF_LPV}
		\begin{split}
               \begin{bmatrix}
				A(\mathrm p(t)) &  B(\mathrm p(t))\\
				I& 0 
			 \end{bmatrix}^*(\Theta \otimes P(\mathrm p(t))  + \Theta_d \otimes \dot P(\mathrm p(t))+\Psi_f \otimes Q)
              \begin{bmatrix}
				A(\mathrm p(t)) &  B(\mathrm p(t))\\
				I & 0
			\end{bmatrix}  &
            \\ 
            +\begin{bmatrix}
				C (\mathrm p(t))   & D (\mathrm p(t))  \\
				0 & I 
			\end{bmatrix}^*\Pi \begin{bmatrix}
				C (\mathrm p(t))   & D (\mathrm p(t))  \\
				0 & I 
			\end{bmatrix}   &\leq 0,
		\end{split}
	\end{equation}
for some performance index matrix $\Pi \in  \mathbb R^{(q+p)\times (q+p)}$, $\Psi_f$ as defined in \eqref{eq:psif}, $\Theta=\begin{bmatrix}
    0& 1\\
    1 & 0
\end{bmatrix}$ and $\Theta_d=\begin{bmatrix}
    0& 0\\
    0 & 1
\end{bmatrix}$, then the given LPV system satisfies input-output performance inequalities \eqref{IO-Performance-TD} and \eqref{IO-Performance-FD}.

\end{lemma}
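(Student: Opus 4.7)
The plan is to adapt the Lyapunov--dissipation argument behind the time-domain gKYP Lemma \ref{lem:gKYP-TD} to the parameter-dependent storage function $V(t):=x^{*}(t)P(\mathrm p(t))x(t)$. First I would pre-multiply \eqref{L2_FF_LPV} by the row $\begin{bmatrix}x^{*}(t) & u^{*}(t)\end{bmatrix}$ and post-multiply by its conjugate transpose. Using the state-space identities
\begin{equation*}
  \begin{bmatrix}A(\mathrm p(t))&B(\mathrm p(t))\\ I&0\end{bmatrix}\begin{bmatrix}x(t)\\u(t)\end{bmatrix}
  =\begin{bmatrix}\dot x(t)\\x(t)\end{bmatrix},\qquad
  \begin{bmatrix}C(\mathrm p(t))&D(\mathrm p(t))\\ 0&I\end{bmatrix}\begin{bmatrix}x(t)\\u(t)\end{bmatrix}
  =\begin{bmatrix}y(t)\\u(t)\end{bmatrix},
\end{equation*}
the LMI collapses pointwise in $t$ to the scalar inequality
\begin{equation*}
  \dot V(t)+\sigma_Q(t)+\begin{bmatrix}y(t)\\u(t)\end{bmatrix}^{*}\Pi\begin{bmatrix}y(t)\\u(t)\end{bmatrix}\leq 0,
\end{equation*}
with $\sigma_Q(t):=\begin{bmatrix}\dot x^{*}(t) & x^{*}(t)\end{bmatrix}(\Psi_f\otimes Q)\begin{bmatrix}\dot x(t)\\ x(t)\end{bmatrix}$. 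The key recombination is that $\Theta\otimes P(\mathrm p)$ contributes $\dot x^{*}Px+x^{*}P\dot x$, $\Theta_d\otimes\dot P(\mathrm p)$ contributes $x^{*}\dot P x$, and the sum is exactly $\tfrac{d}{dt}\bigl(x^{*}(t)P(\mathrm p(t))x(t)\bigr)$ by the chain rule along the parameter trajectory $\mathrm p(t)$; this is the sole place where the $\Theta_d$-block in \eqref{L2_FF_LPV} plays a role.

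Integrating the pointwise inequality over $[0,\infty)$, under the zero initial condition $x(0)=0$ and the AS assumption together with boundedness of $P(\mathrm p)$ on the reachable parameter set, the boundary piece $[V(t)]_{0}^{\infty}$ vanishes. It then remains to prove $\int_{0}^{\infty}\sigma_Q(t)\,\inva t\geq 0$. Using $Q>0$, the Hermitian structure of $\Psi_f$, cyclicity of the trace, and real-valuedness of the signals (whence $x^{T}Q\dot x\in\mathbb R$), a direct computation yields the identity $\sigma_Q(t)=\tr\bigl(Q\begin{bmatrix}\dot x & x\end{bmatrix}\Psi_f\begin{bmatrix}\dot x^{*}\\ x^{*}\end{bmatrix}\bigr)$, and therefore
\begin{equation*}
  \int_{0}^{\infty}\sigma_Q(t)\,\inva t
  =\tr\!\Bigl(Q\!\int_{0}^{\infty}\begin{bmatrix}\dot x & x\end{bmatrix}\Psi_f\begin{bmatrix}\dot x^{*}\\ x^{*}\end{bmatrix}\inva t\Bigr)\geq 0,
\end{equation*}
by the assumed matrix $\Omega_f$-IQC \eqref{eq:L2_LPV_matrix-ineq} together with the elementary fact that $\tr(QM)\geq 0$ for any Hermitian $Q>0$ and $M\geq 0$. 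This delivers the time-domain performance \eqref{IO-Performance-TD}, and the equivalent frequency-domain statement \eqref{IO-Performance-FD} follows by applying the Parseval identity \eqref{eq:Parseval} componentwise to $y$ and $u$, with no transfer-function representation of the LPV dynamics required at this final step.

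The main obstacle is justifying the vanishing of the boundary term $\lim_{t\to\infty}V(t)=0$. As emphasized in Section \ref{sec:LPV}, AS of an LPV system is strictly weaker than UAS or BIBS, so it is not automatic that an $\mathcal L_2$ finite-frequency input drives $x(t)\to 0$; a rigorous argument would need either a strengthening of the stability hypothesis (e.g.\ UAS together with BIBS, plus uniform boundedness of $P$ over the reachable parameter set) or a separate square-integrability/Barbalat argument for $x$ and $\dot x$. A further conceptual subtlety, which is precisely the gap the remainder of the paper sets out to close, is that the matrix $\Omega_f$-IQC \eqref{eq:L2_LPV_matrix-ineq} is \emph{not} itself implied by the finite-frequency character of the input in the LPV setting, so Lemma \ref{lemma_LPV_FF} is only a conditional implication whose IQC premise must be independently secured for each admissible input.
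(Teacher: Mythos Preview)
The paper does not supply its own proof of Lemma~\ref{lemma_LPV_FF} (it is quoted from \cite{S_sun2013induced,S_baar2020parameter}), but your argument is correct and coincides with the proof the paper gives for the structurally identical Theorem~\ref{Extension-LPV-gKYP}: pre-/post-multiply the LMI by $[x;u]$, recognise $\dot V(t)+\sigma_Q(t)+[y;u]^{*}\Pi[y;u]\le 0$, integrate over $[0,\infty)$, and discharge $\int_0^\infty\sigma_Q\,\inva t\ge 0$ via the assumed IQC. You are in fact more explicit than the paper on two points---the trace identity tying the $Q$-weighted scalar $\sigma_Q$ to the matrix IQC \eqref{eq:L2_LPV_matrix-ineq}, and the boundary-term caveat $\lim_{t\to\infty}V(t)=0$ under mere AS---both of which the paper's proof of Theorem~\ref{Extension-LPV-gKYP} passes over silently.
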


If one drops the finite-frequency restriction, the input-output performance analysis of LPV systems can readily be based on the following lemma.

  \begin{lemma}[\cite{con_gahinet1996affine} or Eq. (14) with $Q=0$ in \cite{S_sun2013induced}] \label{lemma_LPV_EF}
    Assume the LPV system
    \eqref{LPV-time} is AS. If there exists a differentiable function $P\colon \mathbb R^{r}\to \mathbb R^{n\times
  n}$, so that $P(\mathrm p(t))$ is Hermitian and positive definite, and  
 \begin{equation}\label{L2_LPV_EF}
		\begin{split}
               \begin{bmatrix}
				A(\mathrm p(t)) & B(\mathrm p(t))\\
				I& 0 
			 \end{bmatrix}^*
             (\Theta \otimes P(\mathrm p(t)) + \Theta_d \otimes \dot P(\mathrm p(t)))
              \begin{bmatrix}
			A(\mathrm p(t)) & B(\mathrm p(t))\\
				I &0
			\end{bmatrix}  &
            \\ 
            +\begin{bmatrix}
				C (\mathrm p(t))   & D (\mathrm p(t))  \\
				0 & I 
			\end{bmatrix}^*\Pi \begin{bmatrix}
				C (\mathrm p(t))   & D (\mathrm p(t))  \\
				0 & I 
			\end{bmatrix}   &\leq 0,
		\end{split}
	\end{equation}
 for some performance index matrix $\Pi \in  \mathbb R^{(q+p)\times (q+p)}$, $\Theta$ and $\Theta_d$ as given in Lemma \ref{lemma_LPV_FF}, then the given LPV system satisfies input-output performance inequalities \eqref{IO-Performance-TD} and \eqref{IO-Performance-FD}.
\end{lemma}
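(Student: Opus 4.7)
The plan is to employ the standard parameter-dependent Lyapunov/dissipation argument with the storage function $V(x(t),\mathrm p(t)) := x^{*}(t) P(\mathrm p(t)) x(t)$. First, I would observe that $\Theta \otimes P(\mathrm p) + \Theta_d \otimes \dot P(\mathrm p)$ equals $\left[\begin{smallmatrix} 0 & P(\mathrm p) \\ P(\mathrm p) & \dot P(\mathrm p) \end{smallmatrix}\right]$, so that after sandwiching by $\left[\begin{smallmatrix} A(\mathrm p) & B(\mathrm p) \\ I & 0 \end{smallmatrix}\right]$ and pre/post-multiplying the whole LMI \eqref{L2_LPV_EF} by $[x(t)^{*}\ u(t)^{*}]$ and its adjoint, the first quadratic form reproduces exactly $\dot V(x(t),\mathrm p(t))$ along trajectories of \eqref{LPV-time}, while the second reproduces the supply rate $[y(t);\,u(t)]^{*} \Pi [y(t);\,u(t)]$ via the output equation. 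The LMI \eqref{L2_LPV_EF} thus translates into the pointwise dissipation inequality
\begin{equation*}
\dot V(x(t),\mathrm p(t)) + \begin{bmatrix} y(t) \\ u(t) \end{bmatrix}^{*} \Pi \begin{bmatrix} y(t) \\ u(t) \end{bmatrix} \leq 0.
\end{equation*}

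Next, I would integrate this inequality from $0$ to $T$ under the zero initial condition $x(0) = 0$ (which is in force throughout the paper, cf. the discussion preceding \eqref{LTI-Fre}), and exploit $P(\mathrm p) > 0$ to bound $V(x(T),\mathrm p(T)) \geq 0$. This yields
\[ \int_0^T \begin{bmatrix} y(t) \\ u(t) \end{bmatrix}^{*} \Pi \begin{bmatrix} y(t) \\ u(t) \end{bmatrix}\inva t \leq V(x(0),\mathrm p(0)) - V(x(T),\mathrm p(T)) \leq 0. \]
Letting $T \to \infty$, with asymptotic stability guaranteeing that $x(T)$ stays bounded and $V(x(T),\mathrm p(T))$ admits a non-negative limit, delivers the time-domain inequality \eqref{IO-Performance-TD}. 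Applying the Parseval identity \eqref{eq:Parseval} then transfers the bound verbatim to the frequency-domain form \eqref{IO-Performance-FD}.

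The main obstacle, such as it is, is not algebraic but analytic: one must justify the limit $T \to \infty$, which requires $V(x(T),\mathrm p(T))$ to remain bounded and non-negative. Asymptotic stability together with $u \in \mathcal L_2$ supplies the former, and the positive definiteness of $P(\mathrm p)$ supplies the latter. Beyond this routine limiting argument, the proof is essentially the parameter-dependent counterpart of the LTI KYP dissipation argument underlying Lemma~\ref{lem:KYP}, with the crucial new ingredient being the $\Theta_d \otimes \dot P$ term in \eqref{L2_LPV_EF} that precisely compensates for the time-variation of the Lyapunov matrix along the scheduling trajectory $\mathrm p(t)$ and would vanish in the LTI specialization $\dot P \equiv 0$.
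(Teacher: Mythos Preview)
Your proposal is correct and follows the standard parameter-dependent Lyapunov/dissipation argument. Note that the paper does not actually supply its own proof of this lemma: it is quoted from \cite{con_gahinet1996affine} and \cite{S_sun2013induced} without proof. However, your argument is precisely the template the paper later uses to prove Theorem~\ref{Extension-LPV-gKYP} (multiply \eqref{eqn_UASLPV-1} by $[x(t);u(t)]$ and its conjugate transpose, recognise $\frac{d}{dt}(x^*P(\mathrm p)x)$, integrate, and use $x(0)=0$ together with non-negativity of $V$ at the upper limit), specialised to $Q=0$ so that the IQC term drops out. So your approach is both correct and fully aligned with the paper's methodology.
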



The LMI condition \eqref{L2_FF_LPV} have a similar form as the LMI condition \eqref{gKYP_LMI} of the gKYP lemma. 
As emphasised earlier, however, the IQC non-negativity property 
is not practical, as 
it can only be tested \emph{a posteriori}, i.e., only when the corresponding state to the input signals are known.
Therefore, for LTI systems, we had shown (cp. Proposition \ref{prop:IQC-LTI}) that finite-frequency input signal always renders the $\Omega_f$-IQC be non-negative. 
And this additional condition has become a practical tool for finite-frequency analysis.
Unfortunately, the same \emph{free travel pass} is not available for LPV system as illustrated in the following example.

\begin{example}[Counterexample]\label{exa:counterexample}
Consider the LPV system \eqref{LPV-time} in affine form with the following parameter matrices:   
\begin{equation}\label{eq:counterexample-LPV-2}
    \begin{split}
\{{A_0,A_1}\}&=\left\{\begin{bmatrix}
		 -8.6329  & -6.5229 \\
   -1.2735  & -9.4779
	\end{bmatrix}, \begin{bmatrix}
		-2.5827  &  7.1275 \\
    7.8186  & -1.9513
	\end{bmatrix} \right\},\\
\{{B_0,B_1}\} &=\left\{\begin{bmatrix}
	-19.6836 \\
   16.7629
	\end{bmatrix},\begin{bmatrix}
	-3.7921 \\
    8.0760
	\end{bmatrix} \right\},\\
\{{C_0,C_1}\}&=\left\{\begin{bmatrix}
		-1.5715  &  1.5934
	\end{bmatrix}, \begin{bmatrix}
		4.2725  & -4.3798
	\end{bmatrix} \right\},\\
\{{D_0,D_1}\}& =\left\{ -4.6104, 1.8747 \right\},
     \end{split}
\end{equation}
where the upper/lower bound of the time-varying scheduling parameter $\mathrm{p}$ is set as $[\underline{\mathrm{p}}, \overline{\mathrm{p}}]$ with $\underline{\mathrm{p}}=0.1, \overline{\mathrm{p}}=0.2$, the upper/lower bound of its variation rate is set as $[\underline{\dot{\mathrm{p}}}, \overline{\dot{\mathrm{p}}}]$ with $\underline{\dot{\mathrm{p}}}=0.4,\overline{\dot{\mathrm{p}}}=0.6$, and with input signals in the
	low-frequency range interval $\Omega_f:=[-1,1]$. 
Again, we use the $H_\infty$ performance index matrix $\Pi=\diag\{I,-\gamma^2 I\}$. 
Following the numerical solving approach presented in the next section, we
compute the optimal $\mathcal{L}_2$-norm induced gain so that
\[
\begin{array}{l}
\sqrt{\frac{\int_{0}^{\infty} y^*(\tau) y(\tau) \inva \tau}  {\int_{0}^{\infty}
u^*(\tau) u(\tau) \inva \tau} }<\gamma_f^{\star}\approx 3.7767.
\end{array}
\]
Then for the considered LPV system with the low-frequency inputs so that the parameter-dependent LMI 
\eqref{L2_FF_LPV} constraint feasibility problem is fulfilled with 
$\Pi = \diag\{I,-(\gamma_f^{\star})^2 I\}= 
\diag\{I, -(3.7767)^2 I \}.$

Also, the minimal performance upper bound $\gamma_e^{\star}$ for entire-frequency interval $\Omega_e:=(-\infty,\infty)$ can be obtained by solving the minimization LMI \eqref{L2_FF_LPV} with $Q=0$, and let $\gamma_e$ be the decision variable to be minimized, i.e.,
\[\begin{array}{l}
\sqrt{\frac{\int_{0}^{\infty} y^*(\tau) y(\tau) \inva \tau}  {\int_{0}^{\infty} u^*(\tau) u(\tau) \inva \tau} }<\gamma_e^{\star} \approx 5.2445.
\end{array}\]

Now, for the inbound frequency signal 
\[\begin{array}{l}
u(t)=\cos(t+8)+\cos(t+10)+\cos(t+20),
\end{array}\]
we can test the validity of the performance inequality (see Fig.
\ref{fig:E1-gamma}), but also observe the failure of the feasibility constraint
\eqref{eq:L2_LPV_matrix-ineq}; see Fig. \ref{fig:E1-S}.

\begin{figure}[htbp]
		\centering
		\includegraphics[width=0.7\textwidth]{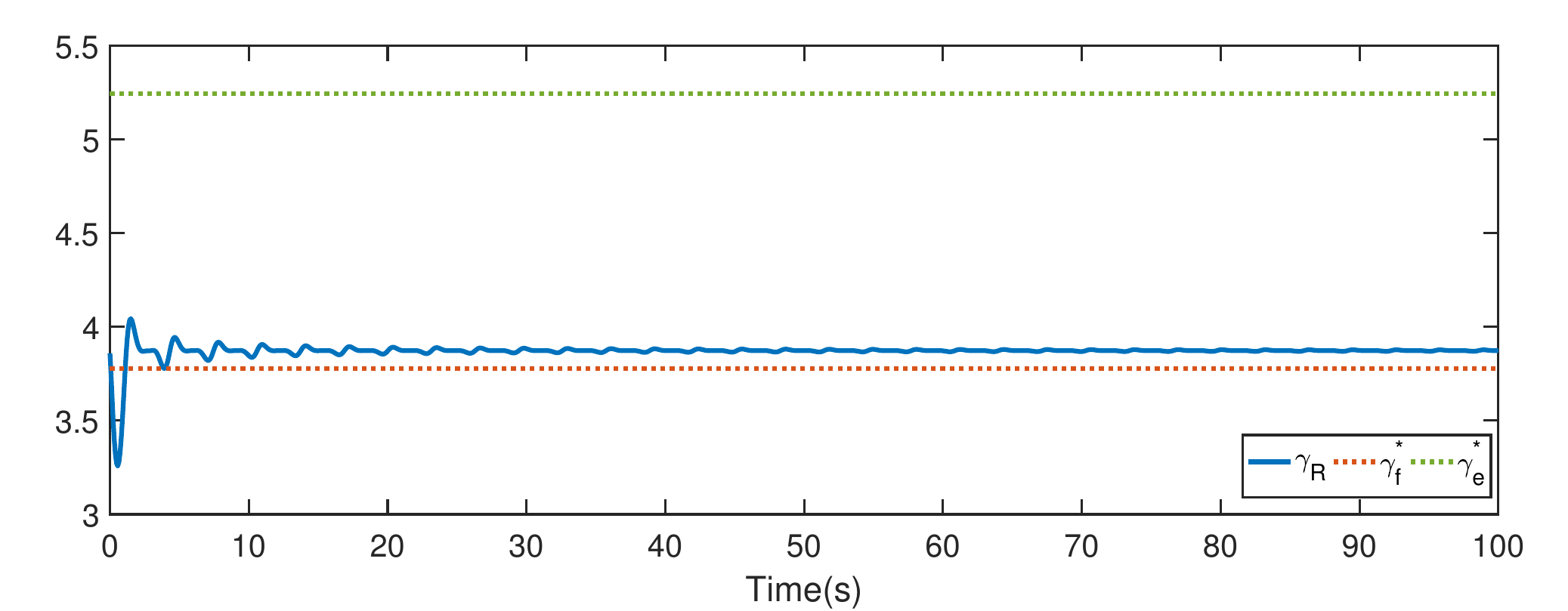}
		\caption{Actual input-output dynamical behavior 
  ($\gamma_R$) 
  and the optimal performance bounds $\gamma_f^{\star}$ and $\gamma_e^{\star}$.}\label{fig:E1-gamma}
	\end{figure}

\begin{figure}[htbp]	
		\centering
		\includegraphics[width=0.7\textwidth]{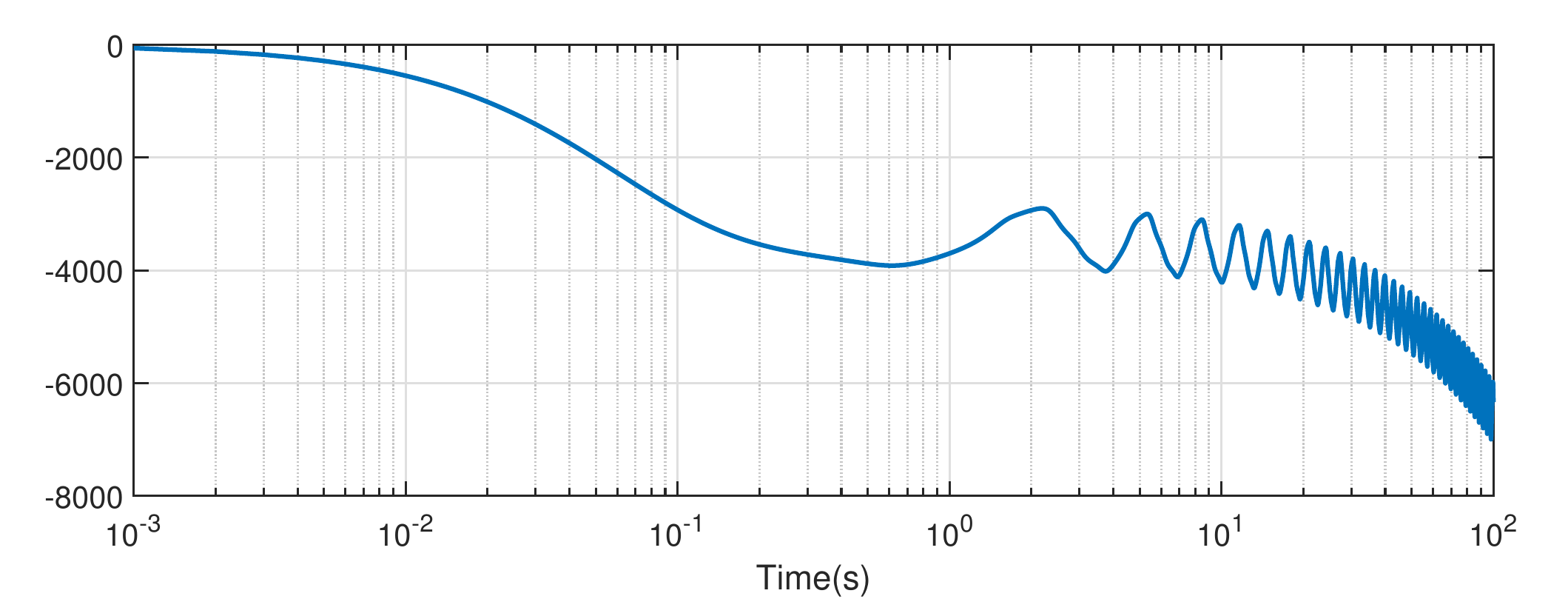}
		\caption{The scalar valued $\Omega_f$-IQC with finite-frequency range.}
		\label{fig:E1-S}
	\end{figure}

 \end{example}

\begin{remark}
  We note that in our setup the signals are truncated for $t<0$ so that, despite
  their appearance as a sinusoid with a single frequency, they do not strictly
  obey the frequency limited assumption. 
  Nonetheless, it is common practice (because of infinite horizons can not be
  simulated and because of the decay of the frequency components in the
  one-sided Fourier transform) to also consider finite-frequency signals on
  finite time ranges.
  Same holds for the $\mathcal{L}_2$ integrability that is a theoretical issue since,
  e.g., the energy of a sine wave is not finite over an unbounded time domain.
  Also here, it is practical to consider the signals as $\mathcal{L}_2$ integrable
  because of the finite simulation domain. 
  Furthermore, on a time interval like $[0,\infty)$, it is always possible to
  introduce a \emph{discount} factor $e^{-\lambda t}$ with $0 < \lambda\ll 1$
  that will render the signal square integrable with only a small effect on its
  frequency spectrum.
\end{remark}

This counterexample shows that there exists intrinsic difference between LPV
systems and LTI systems, with respect to the relationship between
finite-frequency input signals and the non-negative IQC rendering signals.
This difference makes the practical applicability of Lemma \ref{lemma_LPV_FF}
for solving finite-frequency analysis and synthesis problems of LPV systems be
lost, at least, be unreliable. 
With the goal of deriving a similarly valid result for finite-frequency analysis, 
an enlarged frequency interval, based on the interaction/location gap between the system poles and the frequency range, as well as a group of controllability Gramians, is introduced for LPV systems.

\section{Boundedness theorem, extension of gKYP lemma}

\subsection{Boundedness of LPV systems, from general stable LPV systems to BIBS/UAS LPV systems}

Consider LPV systems \eqref{LPV-time} with an initial state $x_0=x(t_0)$, the unique solution of the system has the representation
\begin{equation}\label{x-LPV}
\begin{array}{l}
x(t)=\Phi(t,t_0) x(t_0)+\int_{t_0}^{t}\Phi(t,\tau)B(\mathrm p(\tau))u(\tau)
\inva\tau,~ \forall t\geq t_0,
\end{array}
\end{equation}
where $\Phi(t,t_0)$ is the state transition matrix of the LPV system and satisfies the following differential equations
\begin{equation}\label{Phi_dot}
\begin{array}{l}
    \dot{\Phi}(t,t_0)=A(\mathrm p(t))\Phi(t,t_0),\\
    \dot{\Phi}(t_0,t)=-\Phi(t_0,t)A(\mathrm p(t)),~\Phi(t_0,t_0)=I.
\end{array}
\end{equation}

The boundedness of state transition matrix is given in the following Lemmas.

\begin{lemma}[Boundedness properties of BIBS stable LPV system, \cite{BIBS_2017bounded}]\label{boundness:BIBS}
Consider an $\mathcal{L}_2$ bounded Hermitian matrix $M(\tau)$, and the state transition matrix $\Phi(t,t_0)$ of LPV system. 
If the LPV system is (uniformly) BIBS stable, then there exist positive scalars $n_1$, $n_2$ and $n_3$ such that the following hold: 
		\begin{enumerate}[(1)]
			\item  $
				\|\Phi(t,t_0)\|< n_1, ~t\geq t_0\geq 0$;
                
			\item			
            $\int_0^t \|\Phi(t,\tau) B(\tau)\| \inva\tau < n_2,~t\geq\tau\geq 0$;

            \item	
            $\int_0^t \|\Phi(t,\tau) M(\tau)\Phi^*(t,\tau)\| \inva\tau < n_3,~t\geq\tau\geq 0$.
		\end{enumerate}
	
\end{lemma}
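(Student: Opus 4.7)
The plan is to exploit linearity of the LPV system \eqref{LPV-time} together with the uniform BIBS hypothesis (Definition \ref{def:BIBO}) to derive each of the three bounds essentially separately. For part (1), I would consider the zero-input response $x(t)=\Phi(t,t_0)x_0$ obtained by setting $u\equiv 0$ with a chosen initial state $x_0$. Although Definition \ref{def:BIBO} is phrased with a nontrivial $u$ and implicitly a zero initial state, superposition of the homogeneous and particular parts of \eqref{x-LPV} shows that BIBS is equivalent to joint boundedness of the zero-input response and of the input-to-state operator in the linear case. Either by this equivalence, or by approximating a Dirac-like excitation at $t_0$ by a sequence of bounded inputs supported on a shrinking window and passing to the limit, we obtain $\|\Phi(t,t_0)x_0\|\le n_1\|x_0\|$ uniformly in $t\ge t_0\ge 0$, which yields the first bound.

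For part (2), I would fix $t$ and view $T_t:u\mapsto \int_{t_0}^{t}\Phi(t,\tau)B(\mathrm p(\tau))u(\tau)\inva\tau$ as a bounded linear map from $\mathcal L_\infty([0,t],\mathbb R^p)$ into $\mathbb R^n$, under zero initial state. Testing against extreme-point inputs $u(\tau)=\mathrm{sign}\bigl(v^{*}\Phi(t,\tau)B(\mathrm p(\tau))\bigr)$ for unit vectors $v\in\mathbb R^n$ shows that the operator norm of $T_t$ is equivalent (up to matrix-norm constants) to $\int_{0}^{t}\|\Phi(t,\tau)B(\mathrm p(\tau))\|\inva\tau$. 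Uniform BIBS bounds this operator norm independently of $t$, giving the constant $n_2$.

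For part (3), the strategy is to combine part (1) with a decay estimate. Submultiplicativity yields $\|\Phi(t,\tau)M(\tau)\Phi^{*}(t,\tau)\|\le \|\Phi(t,\tau)\|^{2}\|M(\tau)\|$, but the naive dominated bound $n_1^{2}\int_0^{t}\|M(\tau)\|\inva\tau$ is insufficient because $M$ is only assumed $\mathcal L_2$-bounded, not $\mathcal L_1$. I would therefore upgrade uniform BIBS to uniform exponential stability of the state transition matrix, giving $\|\Phi(t,\tau)\|\le c\,e^{-\lambda(t-\tau)}$ for some $c,\lambda>0$. Under this decay, Cauchy--Schwarz produces
\[
\int_{0}^{t} e^{-2\lambda(t-\tau)}\|M(\tau)\|\inva\tau
\le \Bigl(\int_{0}^{t} e^{-4\lambda(t-\tau)}\inva\tau\Bigr)^{1/2}\Bigl(\int_{0}^{t}\|M(\tau)\|^{2}\inva\tau\Bigr)^{1/2}
\le \frac{1}{2\sqrt{\lambda}}\,\|M\|_{\mathcal L_2},
\]
uniformly in $t$, from which the constant $n_3=c^{2}/(2\sqrt{\lambda})\cdot\|M\|_{\mathcal L_2}$ follows.

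The principal obstacle will be justifying the upgrade from uniform BIBS to uniform exponential stability within the LPV framework, since in the general LTV (hence LPV) setting the two notions are not unconditionally equivalent and typically require an additional ingredient such as uniform complete controllability/observability of $(A(\mathrm p(\cdot)),B(\mathrm p(\cdot)))$, or the stronger UAS property discussed in Definition \ref{def:UAS}. A pragmatic fallback, should this upgrade be unavailable, is to either strengthen the hypothesis of the lemma to UAS (which matches the usage elsewhere in the paper) or to relax the assumption on $M$ to $M\in\mathcal L_1\cap \mathcal L_2$; otherwise the full uniform statement in (3) must be supported by the reference \cite{BIBS_2017bounded}, whose arguments along these lines I would essentially transcribe.
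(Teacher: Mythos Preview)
The paper does not supply its own proof of this lemma: it is stated with the citation \cite{BIBS_2017bounded} and followed only by the remark that the bounds $n_1,n_2,n_3$ are not readily available, with a pointer to \cite{Adrianova1995} for approximation. There is therefore no in-paper argument to compare against; the lemma is imported wholesale from the cited reference.

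On the substance of your proposal: your treatment of parts (1) and (2) is the standard route and is fine. For part (3) you have correctly put your finger on a real issue, but note a possible misreading of the hypothesis. The phrase ``$\mathcal L_2$ bounded Hermitian matrix $M(\tau)$'' is used identically in the companion Lemma~\ref{boundness:UAS}, where item~(3) is stated ``with $\|M(\tau)\|\le m$''; this strongly suggests the intended meaning is pointwise (spectral-norm) boundedness of $M$, not $M\in\mathcal L_2$. Under that reading your Cauchy--Schwarz detour is unnecessary, but the core obstruction you raise remains: mere uniform BIBS gives only $\|\Phi(t,\tau)\|\le n_1$, so the naive estimate yields $\int_0^t\|\Phi(t,\tau)M(\tau)\Phi^*(t,\tau)\|\inva\tau\le n_1^2 m\, t$, which is not uniform in $t$. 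Your proposed upgrade to exponential decay (equivalently UAS) is exactly what the paper invokes in Lemma~\ref{boundness:UAS}, and your observation that BIBS alone does not in general imply UAS for LTV/LPV systems without extra structure (e.g., uniform complete controllability) is accurate. In short, your fallback---either strengthen to UAS or rely on the cited source---matches what the paper effectively does by deferring the BIBS case to \cite{BIBS_2017bounded} and proving only the UAS case itself.
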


Generally speaking, the upper bound $n_1$, $n_2$ and $n_3$ are not readily
available just as state transition matrix $\Phi(t,\tau)$. However, there exist
algorithms for approximxating the bounds; refer to, e.g., \cite{Adrianova1995}. 

\begin{lemma}[Boundedness properties of UAS LPV system] \label{boundness:UAS}
Consider an $\mathcal{L}_2$ bounded Hermitian matrix $M(\tau)$, and the state transition matrix $\Phi(t,t_0)$ of LPV system. If LPV system \eqref{LPV-time} is \emph{UAS}. 
Then there exist positive scalars $\alpha$, $\beta$ and $m$, such that the following are uniformly bounded: 
\begin{enumerate}[(1)]
  \item
  $\|\Phi(t,t_0)\| 
    \leq \alpha e^{-\beta (t-t_0)},~t\geq t_0\geq 0$;
\item 
$\|\int_0^t \Phi(t,\tau) \inva\tau\|
\leq \frac{\alpha}{\beta},~t\geq \tau\geq 0$;
\item 
$\|\int_0^t \Phi(t,\tau) M(\tau) \Phi^*(t,\tau) \inva\tau \|
\leq \frac{\alpha^2}{2\beta} m$, with $\|M(\tau)\|\leq m$.
\end{enumerate}

\end{lemma}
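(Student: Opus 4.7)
The plan is to prove item (1) by invoking the classical equivalence between uniform asymptotic stability and uniform exponential stability for linear time-varying systems, and then to obtain items (2) and (3) as direct consequences of the exponential bound in (1) through elementary estimates on integrals of matrix-valued functions.

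For item (1), I would appeal to the well-known theorem in the theory of linear time-varying (and thus LPV) systems, see, e.g., Rugh \cite{UAS_Rugh1996linear}, which states that UAS of the autonomous part $\dot x = A(\mathrm p(t))x$ of \eqref{LPV-time} in the sense of Definition \ref{def:UAS} is equivalent to the existence of positive constants $\alpha$ and $\beta$ such that the state transition matrix decays uniformly exponentially, i.e.
\[
  \|\Phi(t,t_0)\| \leq \alpha e^{-\beta (t-t_0)}, \quad t\geq t_0 \geq 0.
\]
The standard argument is a converse Lyapunov construction producing a quadratic form $V(t,x) = x^*P(t)x$ with uniformly bounded and uniformly positive-definite $P$ whose derivative along trajectories is uniformly negative; this immediately yields the exponential bound with a uniform rate. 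This gives item (1) directly.

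Once (1) is in hand, items (2) and (3) follow from elementary integral estimates. For (2), sub-multiplicativity and the triangle inequality for vector-valued integrals yield
\[
  \Bigl\|\int_0^t \Phi(t,\tau)\, \inva\tau \Bigr\| \leq \int_0^t \|\Phi(t,\tau)\|\, \inva\tau \leq \alpha \int_0^t e^{-\beta(t-\tau)}\, \inva\tau = \frac{\alpha}{\beta}\bigl(1 - e^{-\beta t}\bigr) \leq \frac{\alpha}{\beta}.
\]
For (3), using $\|M(\tau)\|\leq m$ together with the bound in (1) applied twice,
\[
  \Bigl\|\int_0^t \Phi(t,\tau) M(\tau) \Phi^*(t,\tau)\, \inva\tau \Bigr\| \leq m \int_0^t \|\Phi(t,\tau)\|^2\, \inva\tau \leq m\alpha^2 \int_0^t e^{-2\beta(t-\tau)}\, \inva\tau \leq \frac{m\alpha^2}{2\beta}.
\]

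The only genuinely nontrivial step is the appeal to the converse Lyapunov theorem in item (1) that upgrades UAS to uniform exponential stability of $\Phi$; items (2) and (3) are then purely routine, amounting to bounding matrix norms inside the integrals and integrating a decaying exponential. I would therefore keep the main text of the proof focused on recording the exponential bound from (1) (with an explicit citation) and then stating the two elementary estimates above.
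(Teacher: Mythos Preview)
Your proof is correct and follows essentially the same approach as the paper: invoke the classical equivalence between UAS and uniform exponential stability to get the bound in (1) (the paper cites \cite{lemma_agulhari2018A} rather than \cite{UAS_Rugh1996linear}, but the content is the same), then obtain (2) and (3) by passing the norm inside the integral and integrating the decaying exponential. The paper names the integral estimate as Minkowski's inequality, but your triangle-inequality phrasing is equivalent.
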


\begin{proof}
With the estimate of [\cite{lemma_agulhari2018A}, Lem. 1], relation $\|\Phi(t,t_0)\| \leq \alpha e^{-\beta (t-t_0)}$ follows directly. 
The other inequalities follow by the same arguments and that the norm of an integral is less than the integral of the norm as it follows, e.g., from Minkowski's inequality; see \cite[Prop. 1.3]{Min_bahouri2011fourier}.

\end{proof}

\begin{remark}
    
In particular, the scalars $\alpha$, $\beta$ can be obtained by using any feasible solution of the following parameter-dependent Lypapunov matrix inequalities
[\cite{UAS_Rugh1996linear}, Th. 7.2]. i.e.,  
\begin{equation}\label{UAS-1}
		c_1 I\leq P_s(\mathrm p(t))\leq c_2 I,~c_1,c_2\in\mathbb{R}^+,
	\end{equation}
	\begin{equation}\label{UAS-2}
		A^T(\mathrm p(t)) P_s(\mathrm p(t)) +P_s(\mathrm p(t)) A(\mathrm p(t)) +\dot{P}_s(\mathrm p(t)) \leq -c_3 I, ~c_3\in\mathbb{R}^+,
	\end{equation}
which is explicitly characterized by the parameter mappings $\alpha=\frac{c_2}{c_1}$ and $\beta=\frac{c_3}{2c_2}$.

\end{remark}

Then for the initial state is zero and a finite-frequency input signal, i.e., 
\begin{equation}\label{u}
\begin{array}{l}
u(t)=\int_{\Omega_f} \mathcal U(\jmath\omega) e^{\jmath \omega t} \inva{\omega},~\omega\in\Omega_f,
\end{array}
\end{equation}
the state vector \eqref{x-LPV} and its derivative can be described as 
\begin{equation}\label{x-Dx}
    \begin{array}{l}
        x(t)       
        =\int_{\Omega_f}
        (\mathcal V_{\mathrm p}^1(\jmath\omega) +\mathcal V_{\mathrm p}^2(\jmath\omega)
        +\mathcal V_{\dot {\mathrm p}}(\jmath\omega))
        \mathcal U(\jmath\omega) \inva\omega,
        \\
        \dot x(t)  =\int_{\Omega_f}
        (\jmath\omega\mathcal V_{\mathrm p}^1(\jmath\omega)
        + A(\mathrm p(t) )\mathcal V_{\mathrm p}^2(\jmath\omega) +A(\mathrm p(t))\mathcal V_{\dot {\mathrm p}}(\jmath\omega)) \mathcal U(\jmath\omega) \inva\omega,
       \end{array}
\end{equation}
where $\mathcal V_{\mathrm p}^1 (\jmath\omega)$, $\mathcal V_{\mathrm p}^2 (\jmath\omega)$ and $\mathcal V_{\dot {\mathrm p}}(\jmath\omega)=\mathcal V_{\dot {\mathrm p}}^1(\jmath\omega)+\mathcal V_{\dot {\mathrm p}}^2(\jmath\omega)$ are defined as
   \begin{equation}\label{V-LPV}
   \begin{array}{l}
\mathcal V_{\mathrm p}^1(\jmath\omega)
=e^{\jmath\omega t} (\jmath\omega I-A(\mathrm p(t)))^{-1} B(\mathrm p(t)),
\\
\mathcal V_{\mathrm p}^2(\jmath\omega)
= -\Phi(t,0) (\jmath\omega I-A(\mathrm p(t)))^{-1} B(\mathrm{p}(0)),
   \\
\mathcal V_{\dot {\mathrm p}}^1(\jmath\omega) = 
-\int_{0}^{t}
\Phi(t,\tau) (A(\mathrm p(t))- A(\mathrm p(\tau)))  e^{\jmath \omega \tau} (\jmath \omega I -A(\mathrm p(t)))^{-1} B(\mathrm p(\tau))   \inva\tau,
\\
\mathcal V_{\dot {\mathrm p}}^2(\jmath\omega)= 
-\int_{0}^{t}
{\Phi(t,\tau)}  e^{\jmath \omega \tau} (\jmath \omega I -A(\mathrm p(t)))^{-1} \dot{B}(\mathrm p(\tau)) \inva\tau.
   \end{array}
\end{equation}

\begin{definition}[Finite-frequency controllability Gramian, LPV system] 
\label{con-Gramian-LPV}
Consider the LPV system \eqref{LPV-time} with $(A(\mathrm p(t)), B(\mathrm p(t)))$ controllable, and finite-frequency spectrum $\Omega_f$. Then the following terms are referred to as finite-frequency controllability Gramian, which can be viewed as a generalization of the well-known controllability Gramian in system and control, from entire-frequency setting to finite-frequency setting: 

\begin{enumerate}
    \item finite-frequency controllability Gramian as: 
  \begin{equation}
  \begin{array}{l}
{\mathbb{W}_{\mathrm{p}}(\Omega_f)}
=\int\limits_{\Omega_f} 
 \underbrace{e^{\jmath\omega t}(\jmath\omega I-A(\mathrm p(t)))^{-1}
  B(\mathrm p(t))}_{\mathcal V_p^1(\jmath\omega)}  \underbrace{B^*(\mathrm p(t)) 
  (\jmath\omega I-A(\mathrm p(t)))^{-*} e^{-\jmath\omega t}}_{(\mathcal V_p^1(\jmath\omega))^*} 
   \inva\omega;
\end{array}
\end{equation}

\item  state-transition-matrix-weighted finite-frequency controllability Gramian as:

  \begin{equation}\label{W_p_dp}
  \begin{array}{l}
{\hat{\mathbb{W}}_{\mathrm{p}}(\Omega_f)}
= \int\limits_{\Omega_f} \underbrace{\Phi(t,0)(\jmath\omega I-A(\mathrm p(t)))^{-1}
  B(\mathrm p(0))}_{\mathcal V_p^2(\jmath\omega)} 
 \underbrace{B^*(\mathrm p(0))(\jmath\omega I-A(\mathrm p(t))^*\Phi^*(t,0)}_{(\mathcal V_p^2(\jmath\omega))^*} 
\inva\omega;
\end{array}
\end{equation}

\item  shifted finite-frequency controllability Gramian as: 
\begin{equation}
\begin{array}{l}
{\mathbb{W}_{\dot{\mathrm{p}}}(\Omega_f)}
={\mathbb{W}_{\dot{\mathrm{p}}}^1(\Omega_f)}
+{\mathbb{W}_{\dot{\mathrm{p}}}^2(\Omega_f)},
\end{array}
\end{equation}
with
\begin{equation*}
\begin{array}{l}
{\mathbb{W}_{\dot{\mathrm{p}}}^1(\Omega_f)}
=\int\limits_{\Omega_f} 
\underbrace{\begin{array}{l}
\int_{0}^{t}
\Phi(t,\tau) (A(\mathrm p(t))- A(\mathrm p(\tau)))  e^{\jmath \omega \tau} (\jmath \omega I -A(\mathrm p(t)))^{-1} B(\mathrm p(\tau))   \inva\tau
\end{array}}_{\mathcal{V}_{\dot{\mathrm{p}}}^1(\jmath\omega)}
\\
{\kern 45pt}\times
\underbrace{\begin{array}{l}
\int_{0}^{t}
B^*(\mathrm p(\tau)) (\jmath \omega I -A(\mathrm p(t)))^{-*} e^{-\jmath \omega \tau}
(A(\mathrm p(t))- A(\mathrm p(\tau)))^*   
\Phi^*(t,\tau)   \inva\tau
\end{array}}_{(\mathcal{V}_{\dot{\mathrm{p}}}^1(\jmath\omega))^*}
\inva\omega,
\end{array}
\end{equation*}
and  
\begin{equation*}
\begin{array}{l}
{\mathbb{W}_{\dot{\mathrm{p}}}^2(\Omega_f)}
=
\int\limits_{\Omega_f} 
\underbrace{\begin{array}{l}
\int_{0}^{t}
{\Phi(t,\tau)}  e^{\jmath \omega \tau} (\jmath \omega I -A(\mathrm p(t)))^{-1} \dot{B}(\mathrm p(\tau)) \inva\tau
\end{array}}_{\mathcal{V}_{\dot{\mathrm{p}}}^2(\jmath\omega)}
\\
{\kern 45pt}\times
\underbrace{\begin{array}{l}
\int_{0}^{t}
\dot{B}^*(\mathrm p(\tau))(\jmath \omega I -A(\mathrm p(t)))^{-*} e^{-\jmath \omega \tau} 
{\Phi^*(t,\tau)}   \inva\tau
\end{array}}_{(\mathcal{V}_{\dot{\mathrm{p}}}^2(\jmath\omega))^*}
\inva\omega.
\end{array}
\end{equation*}

  \end{enumerate}
  
\end{definition}

\begin{remark}
Due to the bounded parameter-varying matrices $A(\mathrm p(t))$, $B(\mathrm p(t))$, $C(\mathrm p(t))$, $D(\mathrm p(t))$ and the finite-frequency spectrum $\Omega_f$, the controllability Gramians $\mathbb{W}_{\mathrm p}(\Omega_f)$ and $\hat{\mathbb{W}}_{\mathrm p}(\Omega_f)$ are bounded. 
Moreover, the controllability of LPV system ensures that the lower bounds of $\mathbb{W}_{\mathrm p}^1(\Omega_f)$ is away from zero. 

\end{remark}

\begin{remark}
$\mathbb{W}_{\dot{\mathrm{p}}}^1(\Omega_f)$ and $\mathbb{W}_{\dot{\mathrm{p}}}^2(\Omega_f)$ are 
the solutions of the following two Lyapunov inequalities:
\begin{equation}\label{Lya_W_dp}
\begin{array}{l}
-A(\mathrm p(t)) \mathbb{W}_{\dot{\mathrm{p}}}^1(\Omega_f) -\mathbb{W}_{\dot{\mathrm{p}}}^1(\Omega_f) A^T(\mathrm p(t)) 
+\dot{\mathbb{W}}_{\dot{\mathrm{p}}}^1(\Omega_f)
-\mathbb{W}_{\dot{\mathrm{p}}}^1(\Omega_f) 
-\overline{M}_1 \leq 0,
\\
-A(\mathrm p(t)) \mathbb{W}_{\dot{\mathrm{p}}}^2(\Omega_f) -\mathbb{W}_{\dot{\mathrm{p}}}^2(\Omega_f) A^T(\mathrm p(t)) 
+\dot{\mathbb{W}}_{\dot{\mathrm{p}}}^2(\Omega_f)
-\mathbb{W}_{\dot{\mathrm{p}}}^2(\Omega_f) 
-\overline{M}_2 \leq 0,
\end{array}
\end{equation}
where $\overline{M}_1$ and $\overline{M}_2$ are symmetric matrices.
\end{remark}

\begin{proof}
$\mathbb{W}_{\dot{\mathrm{p}}}^1(\Omega_f)$ and $\mathbb{W}_{\dot{\mathrm{p}}}^2(\Omega_f)$ can be written as 

\begin{equation}\label{W_dp}
\begin{array}{l}
\mathbb{W}_{\dot{\mathrm{p}}}^1(\Omega_f)
=(\int_0^t  \Phi(t,\tau) M_1(\mathrm p(\tau))\inva \tau )
(\int_0^t  M_1^T(\mathrm p(\tau)) \Phi^*(t,\tau) \inva \tau),
\\
\mathbb{W}_{\dot{\mathrm{p}}}^2(\Omega_f)
=(\int_0^t \Phi(t,\tau) M_2(\mathrm p(\tau)) \inva \tau )
(\int_0^t  M_2^T(\mathrm p(\tau)) \Phi^*(t,\tau) \inva \tau),
\end{array}
\end{equation}
where 
\[\begin{array}{l}
M_1(\mathrm p(\tau))=
(A(\mathrm p(t))- A(\mathrm p(\tau)))  e^{\jmath \omega \tau} (\jmath \omega I -A(\mathrm p(t)))^{-1} B(\mathrm p(\tau)),
\\
M_2(\mathrm p(\tau))=
e^{\jmath \omega \tau} (\jmath \omega I -A(\mathrm p(t)))^{-1} \dot{B}(\mathrm p(\tau)).
\end{array}\]

Then, its derivative satisfies
\begin{equation}
    \begin{array}{l}
\dot{\mathbb{W}}_{\dot{\mathrm{p}}}^1(\Omega_f) 
=(\int_0^t  
\frac{\partial \Phi(t,\tau)}{\partial t} M_1(\mathrm p(\tau)) \inva \tau + M(\mathrm p(t)))
(\int_0^t  M_1^T(\mathrm p(\tau)) \Phi^*(t,\tau) \inva \tau)
\\
{\kern 40pt}+(\int_0^t  \Phi(t,\tau) M_1(\mathrm p(\tau)) \inva \tau )
(M_1^T(\mathrm p(t)) 
+\int_0^t  M_1^T(\mathrm p(\tau)) \frac{\partial \Phi^*(t,\tau)}{\partial t}  \inva \tau)
\\
{\kern 40pt}=A(\mathrm p(t)) \mathbb{W}_{\dot{\mathrm{p}}}^1(\Omega_f) +\mathbb{W}_{\dot{\mathrm{p}}}^1(\Omega_f) A^T(\mathrm p(t)) 
\\
{\kern 40pt}+M(\mathrm p(t)) (\int_0^t  M_1^T(\mathrm p(\tau)) \Phi^*(t,\tau) \inva \tau)
+(\int_0^t  \Phi(t,\tau) M_1(\mathrm p(\tau)) \inva \tau )
M_1^T(\mathrm p(t))
\\
{\kern 40pt}\leq 
A(\mathrm p(t))  \mathbb{W}_{\dot{\mathrm{p}}}^1(\Omega_f)
+\mathbb{W}_{\dot{\mathrm{p}}}^1(\Omega_f) A^T(\mathrm p(t)) 
+\overline{M}_1 
+ \mathbb{W}_{\dot{\mathrm{p}}}^1(\Omega_f),
\end{array}
\end{equation}
and
\begin{equation}
\dot{\mathbb{W}}_{\dot{\mathrm{p}}}^2(\Omega_f) 
\leq 
A(\mathrm p(t))  \mathbb{W}_{\dot{\mathrm{p}}}^2(\Omega_f)
+\mathbb{W}_{\dot{\mathrm{p}}}^2(\Omega_f) A^T(\mathrm p(t)) 
+\overline{M}_2 
+ \mathbb{W}_{\dot{\mathrm{p}}}^2(\Omega_f),
\end{equation}
where $\overline{M}_1$ and $\overline{M}_2$ are the upper bounds of $M_1(\mathrm{p}(t)) M_1^T(\mathrm{p}(t))$ and $M_2(\mathrm{p}(t)) M_2^T(\mathrm{p}(t))$.

\end{proof}

Based on aforementioned, the boundedness of controllability Gramian for LPV systems is established as follows:


\begin{proposition}[Inner-connections between the controllability Gramian and state vector of LPV system] \label{gramina_LPV}
Consider the finite-frequency controllability Gramian defined in Definition \ref{con-Gramian-LPV}. Then we have   

\begin{enumerate}
    \item 
$\int_0^t \Bigl(
 \iint\limits_{\Omega_f\times \Omega_f} 
\He\Bigl(\mathcal V_{\mathrm p}^1(\jmath\omega)  
(\mathcal V_{\mathrm p}^1(\jmath\tilde\omega))^* \Bigl) 
\inva\omega \inva\tilde{\omega} \Bigr) \inva t
$
\\
$= \int_0^t \Bigl( {\mathbb{W}_{\mathrm{p}}(\Omega_f)} \Bigr)\inva t $
\\
$ +\int_0^t \Bigl( \iint\limits_{\Omega_f\times\Omega_f}
    {e^{(\jmath(\omega-\tilde\omega) t} }
    (\jmath\omega I-A(\mathrm p(t)))^{-1}  B(\mathrm p(t))
    B^*(\mathrm p(t))   (\jmath\tilde\omega I-A(\mathrm p(t)))^{-*} 
   \inva\omega\inva\tilde\omega
    \Bigr)\inva t$,

\item 
$ \int_0^t \Bigl(
 \iint\limits_{\Omega_f\times \Omega_f} 
\He\Bigl(\mathcal V_{\mathrm p}^2(\jmath\omega)  
(\mathcal V_{\mathrm p}^2(\jmath\tilde\omega))^* \Bigl) 
\inva\omega \inva\tilde{\omega} \Bigr) \inva t$
\\
$=
    \int_0^t \Bigl( {\hat{\mathbb{W}}_{\mathrm{p}}(\Omega_f)} \Bigr)\inva t $
    \\
$+\int_0^t \Bigl( \iint\limits_{\Omega_f\times\Omega_f}
    {\phi(t,0)}
    (\jmath\omega I-A(\mathrm p(t)))^{-1}  B(\mathrm p(0))
    B^*(\mathrm p(0))   (\jmath\tilde\omega I-A(\mathrm p(t)))^{-*}  {\phi^*(t,0)}
   \inva\omega\inva\tilde\omega
    \Bigr)\inva t$,

\item 
$ \int_0^t \Bigl(
 \iint\limits_{\Omega_f\times \Omega_f} 
\He\Bigl(\mathcal V_{\dot{\mathrm p}}^1(\jmath\omega)  
(\mathcal V_{\dot{\mathrm p}}^1(\jmath\tilde\omega))^* \Bigl) 
\inva\omega \inva\tilde{\omega} \Bigr) \inva t$
\\
$=
    \int_0^t \Bigl( {\mathbb{W}_{\dot{\mathrm p}}^1 (\Omega_f)} \Bigr)\inva t 
    \\
    +\int_0^t \Bigl( \iint\limits_{\Omega_f\times\Omega_f}
    \int_{0}^{t}
\Phi(t,\tau) (A(\mathrm p(t))- A(\mathrm p(\tau)))  {e^{\jmath \omega \tau}}
(\jmath \omega I -A(\mathrm p(t)))^{-1} B(\mathrm p(\tau))   \inva\tau$
\\
$\;\;\times
\int_{0}^{t}
B^*(\mathrm p(\tau))
(\jmath \tilde\omega I -A(\mathrm p(t)))^{-*}
{e^{-\jmath \tilde\omega \tau}}
(A(\mathrm p(t))- A(\mathrm p(\tau)))^*    
\Phi^*(t,\tau)  
\inva\tau
   \inva\omega\inva\tilde\omega
    \Bigr)\inva t$,

\item
$ \int_0^t \Bigl(
 \iint\limits_{\Omega_f\times \Omega_f} 
\He\Bigl(\mathcal V_{\dot{\mathrm p}}^2(\jmath\omega)  
(\mathcal V_{\dot{\mathrm p}}^2(\jmath\tilde\omega))^* \Bigl) 
\inva\omega \inva\tilde{\omega} \Bigr) \inva t$
\\
$=
\int_0^t \Bigl( {\mathbb{W}_{\dot{\mathrm p}}^2 (\Omega_f)} \Bigr)\inva t 
$
\\
$+\int_0^t \Bigl( \iint\limits_{\Omega_f\times\Omega_f}
   \int_{0}^{t}
{\Phi(t,\tau)}  {e^{\jmath \omega \tau}} (\jmath \omega I -A(\mathrm p(t)))^{-1} \dot{B}(\mathrm p(\tau)) \inva\tau$
\\
$\times
\int_{0}^{t}
\dot{B}^*(\mathrm p(\tau))
(\jmath \tilde\omega I -A(\mathrm p(t)))^{-*}
{e^{-\jmath \tilde\omega \tau}}
{\Phi^*(t,\tau)}   
\inva\tau
\inva\omega\inva\tilde\omega
    \Bigr)\inva t.$

\end{enumerate}

In particular, for BIBS/UAS LPV system, we have 
\begin{enumerate}
    \item 
$ \lim\limits_{t\to \infty} \frac 1t {\int_0^t  \Bigl(
 \iint\limits_{\Omega_f\times \Omega_f} 
\He\Bigl(\mathcal V_{\mathrm p}^1(\jmath\omega)
(\mathcal V_{\mathrm p}^1(\jmath\tilde\omega))^* \Bigl) 
\inva\omega \inva\tilde{\omega} \Bigr) \inva t}
={\mathbb{W}_{\mathrm{p}}(\Omega_f)},$

\item
$\lim\limits_{t\to \infty} \frac 1t {\int_0^t  \Bigl(
 \iint\limits_{\Omega_f\times \Omega_f} 
\He\Bigl(\mathcal V_{\mathrm p}^2(\jmath\omega)
(\mathcal V_{\mathrm p}^2(\jmath\tilde\omega))^* \Bigl) 
\inva\omega \inva\tilde{\omega} \Bigr) \inva t}
$
\\
$    =\left\{\begin{array}{l}
     \begin{smallmatrix}
     \hat{\mathbb{W}}_{\mathrm{p}}(\Omega_f)
     +\iint\limits_{\Omega_f\times\Omega_f}
    \phi(t,0)
    (\jmath\omega I-A(\mathrm p(t)))^{-1}  B(\mathrm p(0))
    B^*(\mathrm p(0))   (\jmath\tilde\omega I-A(\mathrm p(t)))^{-*}  \phi^*(t,0)
   \inva\omega\inva\tilde\omega
    \Bigr)\inva t,
    \end{smallmatrix}~{\rm BIBS},
\\
0,~{\rm UAS},
    \end{array}\right.
$
    
\item 
$\lim\limits_{t\to \infty} \frac 1t {\int_0^t  \Bigl(
 \iint\limits_{\Omega_f\times \Omega_f} 
\He\Bigl(\mathcal V_{\dot{\mathrm{p}}}^1(\jmath\omega)
(\mathcal V_{\dot{\mathrm{p}}}^1(\jmath\tilde\omega))^* \Bigl) 
\inva\omega \inva\tilde{\omega} \Bigr) \inva t}=
{ 
\mathbb{W}_{\dot{\mathrm{p}}}^1(\Omega_f)
},$

\item 
$ \lim\limits_{t\to \infty} \frac 1t {\int_0^t  \Bigl(
 \iint\limits_{\Omega_f\times \Omega_f} 
\He\Bigl(\mathcal V_{\dot{\mathrm{p}}}^2(\jmath\omega)
(\mathcal V_{\dot{\mathrm{p}}}^2(\jmath\tilde\omega))^* \Bigl) 
\inva\omega \inva\tilde{\omega} \Bigr) \inva t}
={ 
\mathbb{W}_{\dot{\mathrm{p}}}^2(\Omega_f)},$

\item 
$ \lim\limits_{t\to \infty} \frac 1t {\int_0^t  \Bigl(
 \iint\limits_{\Omega_f\times \Omega_f} 
\He\Bigl(\mathcal V_{{\mathrm{p}}}^1(\jmath\omega)
(\mathcal V_{{\mathrm{p}}}^2(\jmath\tilde\omega))^* \Bigl) 
\inva\omega \inva\tilde{\omega} \Bigr) \inva t}
=0,$

\item 
$ \lim\limits_{t\to \infty} \frac 1t {\int_0^t  \Bigl(
 \iint\limits_{\Omega_f\times \Omega_f} 
\He\Bigl(\mathcal V_{{\mathrm{p}}}^1(\jmath\omega)
(\mathcal V_{\dot{\mathrm{p}}}^1(\jmath\tilde\omega))^* \Bigl) 
\inva\omega \inva\tilde{\omega} \Bigr) \inva t}
=0,$

\item 
$ \lim\limits_{t\to \infty} \frac 1t {\int_0^t  \Bigl(
 \iint\limits_{\Omega_f\times \Omega_f} 
\He\Bigl(\mathcal V_{{\mathrm{p}}}^1(\jmath\omega)
(\mathcal V_{\dot{\mathrm{p}}}^2(\jmath\tilde\omega))^* \Bigl) 
\inva\omega \inva\tilde{\omega} \Bigr) \inva t}
=0,$

\item 
$ \lim\limits_{t\to \infty} \frac 1t {\int_0^t  \Bigl(
 \iint\limits_{\Omega_f\times \Omega_f} 
\He\Bigl(\mathcal V_{{\mathrm{p}}}^2(\jmath\omega)
(\mathcal V_{\dot{\mathrm{p}}}^1(\jmath\tilde\omega))^* \Bigl) 
\inva\omega \inva\tilde{\omega} \Bigr) \inva t}
=0,$

\item 
$ \lim\limits_{t\to \infty} \frac 1t {\int_0^t  \Bigl(
 \iint\limits_{\Omega_f\times \Omega_f} 
\He\Bigl(\mathcal V_{{\mathrm{p}}}^2(\jmath\omega)
(\mathcal V_{\dot{\mathrm{p}}}^2(\jmath\tilde\omega))^* \Bigl) 
\inva\omega \inva\tilde{\omega} \Bigr) \inva t}
=0,$

\item 
$ \lim\limits_{t\to \infty} \frac 1t {\int_0^t  \Bigl(
 \iint\limits_{\Omega_f\times \Omega_f} 
\He\Bigl(\mathcal V_{\dot{\mathrm{p}}}^1(\jmath\omega)
(\mathcal V_{\dot{\mathrm{p}}}^2(\jmath\tilde\omega))^* \Bigl) 
\inva\omega \inva\tilde{\omega} \Bigr) \inva t}
=0,$

\end{enumerate}

where $\mathcal V_{\mathrm p}^1(\jmath\omega)$, $\mathcal V_{\mathrm p}^2(\jmath\omega)$, $\mathcal V_{\dot{\mathrm p}}^1(\jmath\omega)$ and $\mathcal V_{\dot{\mathrm p}}^2(\jmath\omega)$ are defined as \eqref{V-LPV}.

\end{proposition}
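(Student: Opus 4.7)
The plan is to adapt the proof of Proposition \ref{gramina_LTI} to the LPV setting, replacing the matrix exponential $e^{At}$ by the state transition matrix $\Phi(t,\tau)$, and accounting for the two additional contributions $\mathcal{V}_{\dot{\mathrm p}}^1,\mathcal{V}_{\dot{\mathrm p}}^2$ induced by the parameter variation. Throughout I would keep the two blocks of claims separate: the exact finite-time identities (items 1--4 of the first block) are algebraic bookkeeping, while the long-horizon averages (items 1--10 of the second block) rely on the boundedness machinery of Lemmas \ref{boundness:BIBS} and \ref{boundness:UAS}.

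For the four finite-time identities, I would substitute the closed-form expressions \eqref{V-LPV} into $\He(\mathcal{V}_i(\jmath\omega)(\mathcal{V}_i(\jmath\tilde\omega))^*)$ and decompose the integrand into two pieces. The first piece is the part that, when added to its Hermitian conjugate and integrated, collapses $e^{\jmath\omega t}e^{-\jmath\tilde\omega t}$ (or the $\Phi(t,\tau)$/$\Phi^*(t,\tau)$ pairing) into the explicit Gramian structure of Definition \ref{con-Gramian-LPV}; the second piece is the off-diagonal residual that still carries the oscillatory factor $e^{\jmath(\omega-\tilde\omega)t}$ (or the nested $\tau$-integral for the $\dot{\mathrm p}$-terms). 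This reproduces the RHS of items 1--4 as the sum of a Gramian time-integral and an explicit cross-frequency residual.

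For the asymptotic limits, the decisive tool is the boundedness of the state transition matrix. For UAS systems, Lemma \ref{boundness:UAS} gives $\|\Phi(t,0)\|\le \alpha e^{-\beta t}$, so every residual carrying $\Phi(t,0)$ (or $\Phi(t,\tau)\Phi^*(t,\tau)$) in a sandwich structure is exponentially small, hence its Cesàro average vanishes; this delivers items 2 (UAS branch) and the vanishing of the cross-terms 5--10. For BIBS systems, Lemma \ref{boundness:BIBS} only supplies uniform bounds on $\|\Phi(t,\tau)\|$ and its weighted integrals, so the $\hat{\mathbb{W}}_{\mathrm p}$ contribution survives the averaging and yields the non-zero BIBS branch of item 2. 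The remaining cross-terms, which pair one "pure-frequency" factor $\mathcal{V}_{\mathrm p}^1$ with a $\Phi$-weighted factor, vanish either by the exponential decay of $\Phi$ (UAS) or by a Riemann--Lebesgue-type cancellation of the carrier $e^{\jmath(\omega-\tilde\omega)t}$ after the outer $\frac{1}{t}\int_0^t$ is taken (BIBS), exactly as in the proof of \cite[Th. 3]{gKYP_iwasaki2005time}.

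The hard part will be items 3, 4 and the cross-term in item 9, because the state transition matrix is nested inside the inner $\tau$-integral defining $\mathcal{V}_{\dot{\mathrm p}}^1,\mathcal{V}_{\dot{\mathrm p}}^2$, so the decomposition does not simply mimic the LTI argument. Here I would first uniformly bound $\bigl\|\int_0^t \Phi(t,\tau) M_i(\mathrm p(\tau)) \inva\tau\bigr\|$ using $M_iM_i^*\le\overline M_i$ together with Lemma \ref{boundness:UAS}(3) (respectively Lemma \ref{boundness:BIBS}(3) in the BIBS case), and then invoke the Lyapunov inequalities \eqref{Lya_W_dp} already derived above to control the growth of $\mathbb{W}_{\dot{\mathrm p}}^i(\Omega_f)$. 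Once those estimates are available, the surviving Gramian contribution in items 3 and 4 is identified, and the vanishing of item 9 follows from combining the two independent $\Phi$-decays against the double-frequency integration over a bounded set $\Omega_f$.
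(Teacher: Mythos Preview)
Your proposal is correct and follows essentially the same approach as the paper, which gives only a one-line proof stating that the conclusion follows from the boundedness properties of BIBS/UAS LPV systems in Lemmas \ref{boundness:BIBS} and \ref{boundness:UAS}. Your write-up is in fact a considerably more detailed and careful elaboration of that sketch, correctly separating the algebraic finite-time identities from the asymptotic averages and identifying the relevant boundedness inputs for each term.
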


\begin{proof}
This conclusion can be proven by leveraging the boundedness properties of BIBS stable/UAS LPV system in Lemma \ref{boundness:BIBS} and Lemma \ref{boundness:UAS}.
\end{proof}

\begin{remark}
    The controllability of LPV systems plays a crucial role in ensuring the boundedness of the state vector components. 
    Moreover, controllability is fundamental for generalizing gKYP lemma form the LTI system framework to the LPV systems setting in the next subsection.
\end{remark}

\subsection{Time domain interpretation of finite-frequency input signal}

Firstly, let us define the gap between a matrix and a finite-frequency range.


\begin{definition}
[Gap between the system matrix and finite-frequency range]
Given a parameter dependent matrix $A(\mathrm{p}(t))$ and a finite-frequency range $\Omega_f$, for all $t> 0$,
\begin{equation}\label{gap}
\begin{array}{l}
    \bigl(d(A(\mathrm{p}(t)),\Omega_f)\bigl)^2
   =\left\{\begin{array}{l}
         0, {\kern 189pt} {\rm if}~ 
\left[\begin{smallmatrix}
     A^*(\mathrm{p}(t)) & I
       \end{smallmatrix}\right]
       (\Psi_{f} \otimes I)
       \left[\begin{smallmatrix}
     A(\mathrm{p}(t)) \\
     I
         \end{smallmatrix}\right] \geq 0,
         \\
         \lambda_{\rm max}\Bigl(-\left[\begin{smallmatrix}
     A^*(\mathrm{p}(t)) & I
       \end{smallmatrix}\right]
       (\Psi_{f} \otimes I)
       \left[\begin{smallmatrix}
     A(\mathrm{p}(t)) \\
     I
         \end{smallmatrix}\right] \Bigl),~
         {\rm if}~
\left[\begin{smallmatrix}
     A^*(\mathrm{p}(t)) & I
       \end{smallmatrix}\right]
       (\Psi_{f} \otimes I)
       \left[\begin{smallmatrix}
     A(\mathrm{p}(t)) \\
     I
         \end{smallmatrix}\right] < 0,
    \end{array}\right.
    \end{array}
\end{equation}
will be referred as to the gap between the matrix $A(\mathrm{p}(t))$ and finite-frequency range $\Omega_f$.    
\end{definition}


Then, incorporating an enlarged finite-frequency interval, the non-negativity of IQC for LPV systems can be guaranteed as follows:


\begin{theorem}
[Time domain interpretation under the excitation of finite-frequency input signal, LPV system]
\label{Th_LPV_BIBS/UAS} 
Consider LPV system \eqref{LPV-time} with $(A(\mathrm p(t)), B(\mathrm p(t)))$ controllable and assume LPV system \eqref{LPV-time} is BIBS stable. If the input signal $u(t)$ is a finite-frequency signal with respect to $\Omega_f$. Then there exist an enlarged frequency range ${\Omega}_{f+\delta}$, such that system state and its derivatives satisfy the following $({\Omega}_{f+\delta})$-IQC: 
 \begin{equation}\label{s_IQC_LPV}
 \int_0^{\infty} 
    \He\Bigl(
    \begin{bmatrix} \dot x^*(t) & x^*(t) \end{bmatrix} 
    \Psi_{f+\delta} 
    \begin{bmatrix} \dot x(t)\\ x(t) \end{bmatrix}
    \Bigl)
    \inva t \geq 0,
	\end{equation}
where ${\Psi}_{f+\delta}$ is the companion matrix with an enlarged frequency range ${\Omega}_{f+\delta}$ can be constructed as: 
\begin{equation}\label{new_range}
\begin{array}{l}
\Omega_l \subset   \Omega_{l+\delta}
:=\Omega_l\oplus\Omega_\delta
=[-\sqrt{\varpi_l^2+\delta^2}, \sqrt{\varpi_l^2+\delta^2}],
\\
\Omega_m \subset \Omega_{m+\delta}
:=\Omega_l\oplus\Omega_\delta
=[-\varpi_1-\frac{\sqrt{(\varpi_2-\varpi_1)^2 +4 \delta^2}}{2},-\varpi_2+\frac{\sqrt{(\varpi_2-\varpi_1)^2 +4 \delta^2}}{2}]
\\
{\kern 130pt}\cup[\varpi_1-\frac{\sqrt{(\varpi_2-\varpi_1)^2 +4 \delta^2}}{2},\varpi_2+\frac{\sqrt{(\varpi_2-\varpi_1)^2 +4 \delta^2}}{2}],
\\
\Omega_h \subset \Omega_{h+\delta}
:=\Omega_h\oplus\Omega_\delta
=(-\infty, -\sqrt{\varpi_h^2-\delta^2}] \cup [\sqrt{\varpi_h^2-\delta^2},+\infty),
\end{array}
\end{equation}
with $\delta$ satisfying
    \begin{equation}\label{delta}
    \begin{array}{l}
\delta^2 
\geq 
\bigl(d(A(\mathrm p(t)),\Omega_f)\bigl)^2  
    \Bigl(
    -\tr\bigl(
    \hat{\mathbb W}_{\mathrm p}(\Omega_f)\bigl) 
+
\tr\bigl(\mathbb W_{\dot {\mathrm p}}(\Omega_f)\bigl)
    \Bigl)
    \times 
    \Bigl(
    \tr(\mathbb W_{\mathrm p}(\Omega_f)
    \Bigr)^{-1}.
\end{array}
\end{equation}

In particular, for UAS LPV systems, a more elegant  estimate of the admissible values of $\delta$ can be given as: 
\begin{equation}\label{delta-UAS}
    \begin{array}{l}
    \delta^2 \geq   
    \bigl(d(A(\mathrm p(t)),\Omega_f)\bigl)^2  
    \tr\bigl(\mathbb W_{\dot {\mathrm p}}(\Omega_f)\bigl) 
    \times
    \Bigl( 
    \tr(\mathbb W_{\mathrm p}(\Omega_f))
    \Bigr)^{-1}.
    \end{array}
\end{equation}

\end{theorem}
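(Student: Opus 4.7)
The plan is to mirror the LTI argument of Proposition \ref{prop:IQC-LTI}, substituting the LPV decomposition \eqref{x-Dx} into the IQC integrand. First I would take the trace of the Hermitian part in \eqref{s_IQC_LPV} and plug in $x(t) = \int_{\Omega_f}(\mathcal V_{\mathrm p}^1 + \mathcal V_{\mathrm p}^2 + \mathcal V_{\dot{\mathrm p}}^1 + \mathcal V_{\dot{\mathrm p}}^2)\mathcal U\,\inva\omega$ together with the corresponding expansion of $\dot x$, producing a $4\times 4$ array of blocks indexed by pairs of the $\mathcal V$-kernels and sandwiching $\Psi_{f+\delta}$ (in exact analogy to \eqref{IQC-eqn1}--\eqref{IQC-eqn2}). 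I would then separate the diagonal $(\mathcal V_{\mathrm p}^1,\mathcal V_{\mathrm p}^1)$ block from the remaining fifteen blocks.

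For the $(\mathcal V_{\mathrm p}^1,\mathcal V_{\mathrm p}^1)$ block the inner quadratic form collapses to $f(\omega,\Omega_{f+\delta})$ as in \eqref{eq:f-positive-inband}, which by the constructions \eqref{new_range} and \eqref{eq:psif} satisfies $f(\omega,\Omega_{f+\delta}) - f(\omega,\Omega_f) = \delta^2$ pointwise on $\Omega_f$ (direct check using the $\varpi_l^2 + \delta^2$, $\varpi_h^2 - \delta^2$, and midpoint-shift identities of \eqref{new_range}). Invoking item (1) of Proposition \ref{gramina_LPV}, the Cesàro limit $\tfrac{1}{t}\int_0^t$ of this block is bounded below by $\delta^2\,\tr(\mathbb W_{\mathrm p}(\Omega_f))$, which is the positive reserve provided by the frequency enlargement.

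For the remaining blocks I would bundle the non-$\mathcal V_{\mathrm p}^1$ contributions. Whenever the inner $2\times 2$ form reduces to $\begin{bmatrix}A^*(\mathrm p(t)) & I\end{bmatrix}(\Psi_f\otimes I)\begin{bmatrix}A(\mathrm p(t))\\ I\end{bmatrix}$ (as it does for every $(\mathcal V_{\mathrm p}^2,\mathcal V_{\mathrm p}^2)$, $(\mathcal V_{\dot{\mathrm p}}^j,\mathcal V_{\dot{\mathrm p}}^k)$ interaction), definition \eqref{gap} yields the scalar norm bound $(d(A(\mathrm p(t)),\Omega_f))^2$. Plugging in the Cesàro limits (items (2)--(4) of Proposition \ref{gramina_LPV}) bounds the diagonal $\mathcal V_{\mathrm p}^2$ and $\mathcal V_{\dot{\mathrm p}}$ blocks in absolute value by $(d(A,\Omega_f))^2\,\tr(\hat{\mathbb W}_{\mathrm p}(\Omega_f))$ and $(d(A,\Omega_f))^2\,\tr(\mathbb W_{\dot{\mathrm p}}(\Omega_f))$ respectively, and the cross blocks vanish in the Cesàro limit by items (5)--(10) of Proposition \ref{gramina_LPV}. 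Adding up the signed contributions, the non-negativity of the IQC reduces to $\delta^2\,\tr(\mathbb W_{\mathrm p}(\Omega_f))\geq (d(A,\Omega_f))^2\bigl(-\tr(\hat{\mathbb W}_{\mathrm p}(\Omega_f)) + \tr(\mathbb W_{\dot{\mathrm p}}(\Omega_f))\bigr)$, which rearranges to \eqref{delta}. For the UAS case, Lemma \ref{boundness:UAS} forces $\Phi(t,0)\to 0$ exponentially so $\hat{\mathbb W}_{\mathrm p}(\Omega_f)\to 0$ in the Cesàro sense (item (2) of Proposition \ref{gramina_LPV} with the UAS branch), sharpening the bound to \eqref{delta-UAS}.

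The main obstacle is the bookkeeping of the off-diagonal blocks involving $\mathcal V_{\dot{\mathrm p}}^1$ and $\mathcal V_{\dot{\mathrm p}}^2$, which carry nested time integrals over the state transition $\Phi(t,\tau)$ and the parameter-drift term $A(\mathrm p(t))-A(\mathrm p(\tau))$; controlling these uniformly in $(\omega,\tilde\omega)\in\Omega_f\times\Omega_f$ requires simultaneously applying the uniform boundedness of Lemma \ref{boundness:BIBS}/\ref{boundness:UAS} and the double-Gramian identities of \eqref{W_p_dp}--\eqref{W_dp}. A secondary difficulty is ensuring that the gap bound \eqref{gap}, which is a matrix spectral bound, can be transferred to a scalar trace inequality, which is done by a Cauchy--Schwarz/cyclic trace manipulation after factoring out $(d(A,\Omega_f))^2 I$ from the indefinite inner form.
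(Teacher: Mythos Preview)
Your proposal is correct and follows essentially the same route as the paper's proof: both substitute the LPV state decomposition \eqref{x-Dx} into the trace form of the IQC, exploit the identity $\Psi_{f+\delta}=\Psi_f+\delta^2\Theta_d$ (which is precisely your pointwise observation $f(\omega,\Omega_{f+\delta})-f(\omega,\Omega_f)=\delta^2$), apply Proposition~\ref{gramina_LPV} to replace the Ces\`aro limits of the diagonal $\mathcal V$-blocks by the Gramians $\mathbb W_{\mathrm p}$, $\hat{\mathbb W}_{\mathrm p}$, $\mathbb W_{\dot{\mathrm p}}$ and to kill the cross blocks, and finally bound the indefinite inner form $[A^*~I](\Psi_f\otimes I)[A;~I]$ by the gap $(d(A(\mathrm p(t)),\Omega_f))^2$ to obtain \eqref{delta} (with the $\hat{\mathbb W}_{\mathrm p}$ term dropping out under UAS to give \eqref{delta-UAS}). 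The obstacles you flag---the nested-$\Phi$ bookkeeping for the $\mathcal V_{\dot{\mathrm p}}$ blocks and the passage from the spectral gap bound to a trace inequality---are exactly the places where the paper leans on Lemmas~\ref{boundness:BIBS}/\ref{boundness:UAS} and the cyclicity of the trace, so your diagnosis of where the work lies is accurate.
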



\begin{proof}

As well known that inequality \eqref{s_IQC_LPV} holds if and only if the following formula holds:
\begin{equation}
    \tr \Bigl( 
    \int_0^{\infty} 
    \He\Bigl(
    \begin{bmatrix} \dot x(t) & x(t) \end{bmatrix} 
    \Psi_{f+\delta} 
    \begin{bmatrix} \dot x^*(t)\\ x^*(t) \end{bmatrix}
    \Bigl)
    \inva t
    \Bigl) \geq 0,
\end{equation}
with
\begin{equation}\label{Th1-eqn3}
\begin{array}{l}
\tr\bigg( \He \Bigl(
    \begin{bmatrix} \dot x(t)& x(t) \end{bmatrix} 
    \Psi_{f+\delta}
    \begin{bmatrix} \dot x^*(t)\\ x^*(t) \end{bmatrix} 
     \Bigl) \bigg)
\\
= 
\tr\bigg(  \He\Bigl(
\begin{bmatrix} 
    \dot x(t)& x(t) 
    \end{bmatrix} 
    \begin{bmatrix}
        \Psi_f^{11} & \Psi_f^{12} \\
        \Psi_f^{21} & \Psi_f^{22}+\delta^2
    \end{bmatrix} 
    \begin{bmatrix} 
    \dot x^*(t)\\ 
    x^*(t) 
    \end{bmatrix} 
\Bigl)\bigg)
\\
=
\iint\limits_{\Omega_f\times\Omega_f}  
   \tr\bigg(  \He\Bigl(
   \Psi_f^{11}
   \Bigl(\jmath\omega \mathcal{V}_{\mathrm{p}}^1(\jmath\omega)
   + A \mathcal{V}_{\mathrm{p}}^2(\jmath\omega)
   + A \mathcal{V}_{\dot{\mathrm{p}}}^1(\jmath\omega)
   + A \mathcal{V}_{\dot{\mathrm{p}}}^2(\jmath\omega)
   \Bigl) \mathcal U(\jmath\omega) 
   \\
   \times
   \mathcal U^*(\jmath\tilde\omega) 
\Bigl(\jmath\tilde\omega \mathcal{V}_{\mathrm{p}}^1(\jmath\tilde\omega)
+ A \mathcal{V}_{\mathrm{p}}^2(\jmath\tilde\omega)
+ A \mathcal{V}_{\dot{\mathrm{p}}}^1(\jmath\tilde\omega)
+ A \mathcal{V}_{\dot{\mathrm{p}}}^2(\jmath\tilde\omega)
\Bigl)^* 
\Bigl)\bigg)
    \inva \omega   \inva \tilde\omega 
\\
+
\iint\limits_{\Omega_f\times\Omega_f}  
   \tr\bigg(  \He\Bigl(
   \Psi_f^{12}
   \Bigl(\jmath\omega \mathcal{V}_{\mathrm{p}}^1(\jmath\omega)
   + A \mathcal{V}_{\mathrm{p}}^2(\jmath\omega)
   + A \mathcal{V}_{\dot{\mathrm{p}}}^1(\jmath\omega)
    + A \mathcal{V}_{\dot{\mathrm{p}}}^2(\jmath\omega)
   \Bigl) \mathcal U(\jmath\omega) 
   \\
   \times
   \mathcal U^*(\jmath\tilde\omega) 
\Bigl(\mathcal{V}_{\mathrm{p}}^1(\jmath\tilde\omega)
+ \mathcal{V}_{\mathrm{p}}^2(\jmath\tilde\omega))
+ \mathcal{V}_{\dot{\mathrm{p}}}^1(\jmath\tilde\omega)
+ \mathcal{V}_{\dot{\mathrm{p}}}^2(\jmath\tilde\omega)
\Bigl)^* 
\Bigl)\bigg)
    \inva \omega   \inva \tilde\omega 
\\
+
\iint\limits_{\Omega_f\times\Omega_f}  
   \tr\bigg(  \He\Bigl(
   \Psi_f^{21}
   \Bigl( \mathcal{V}_{\mathrm{p}}^1(\jmath\omega)
   + \mathcal{V}_{\mathrm{p}}^2(\jmath\omega)
   + \mathcal{V}_{\dot{\mathrm{p}}}^1(\jmath\omega)
   + \mathcal{V}_{\dot{\mathrm{p}}}^2(\jmath\omega)
   \Bigl) 
   \mathcal U(\jmath\omega)
      \\
   \times
   \mathcal U^*(\jmath\tilde\omega) 
\Bigl(\jmath\tilde\omega \mathcal{V}_{\mathrm{p}}^1(\jmath\tilde\omega)
+ A \mathcal{V}_{\mathrm{p}}^2(\jmath\tilde\omega)
+ A \mathcal{V}_{\dot{\mathrm{p}}}^1(\jmath\tilde\omega)
+ A \mathcal{V}_{\dot{\mathrm{p}}}^2(\jmath\tilde\omega)
\Bigl)^* 
\Bigl)\bigg)
    \inva \omega   \inva \tilde\omega 
\\
+
\iint\limits_{\Omega_f\times\Omega_f}  
   \tr\bigg(  \He\Bigl(
   (\Psi_f^{22}+\delta^2)
   \Bigl(\mathcal{V}_{\mathrm{p}}^1(\jmath\omega)
   + \mathcal{V}_{\mathrm{p}}^2(\jmath\omega)
   + \mathcal{V}_{\dot{\mathrm{p}}}^1(\jmath\omega)
   + \mathcal{V}_{\dot{\mathrm{p}}}^2(\jmath\omega)
   \Bigl) \mathcal U(\jmath\omega)
      \\
   \times
   \mathcal U^*(\jmath\tilde\omega) 
\Bigl(\mathcal{V}_{\mathrm{p}}^1(\jmath\tilde\omega)
+ \mathcal{V}_{\mathrm{p}}^2(\jmath\tilde\omega)
+ \mathcal{V}_{\dot{\mathrm{p}}}^1(\jmath\tilde\omega)
+ \mathcal{V}_{\dot{\mathrm{p}}}^2(\jmath\tilde\omega)
\Bigl)^* 
\Bigl)\bigg)
    \inva \omega   \inva \tilde\omega.
\end{array}
\end{equation}

Integrating equation \eqref{Th1-eqn3} from 0 to $\infty$ and combining Proposition \ref{gramina_LPV}, we obtain
\begin{equation}\label{proof_int_IQC}
\begin{array}{l}
\int_0^\infty
\tr\bigg(  \He\Bigl(
\begin{bmatrix} 
    \dot x(t)& x(t) 
    \end{bmatrix} 
    \Psi_{f+\delta} 
    \begin{bmatrix} 
    \dot x^*(t)\\ 
    x^*(t) 
    \end{bmatrix} 
\Bigl)\bigg) 
\inva t
\\
=\int_0^\infty 
\tr \bigg(
\int\limits_{\Omega_f}  
   (\begin{bmatrix} 
    \jmath\omega  &     1    
    \end{bmatrix}  \Psi_{f}
    \begin{bmatrix}    
    \jmath\omega &     1
    \end{bmatrix}^*
    +\delta^2)
      \mathcal V_{\mathrm{p}}^1(\jmath\omega) 
     \mathcal U(\jmath\omega)
   \mathcal U^*(\jmath\omega) 
   (\mathcal V_{\mathrm{p}}^1 (\jmath\omega))^*
    \inva \omega
     \bigg)
     \inva t
    \\
+
\int_0^\infty 
\tr\bigg(
\begin{bmatrix}
    A^*(\mathrm p(t)) & I
\end{bmatrix}
(\Psi_f \otimes I)
\begin{bmatrix}
   A(\mathrm p(t))\\
   I
\end{bmatrix}
\int\limits_{\Omega_f}
    \mathcal V_{\dot{\mathrm{p}}}^1 (\jmath\omega) 
    \mathcal U(\jmath\omega)
   \mathcal U^*(\jmath\omega) 
    (\mathcal V_{\dot{\mathrm{p}}}^1 (\jmath\omega))^*
    \inva \omega 
    \bigg)
 \inva t
 \\
 +
\int_0^\infty 
\tr\bigg(
\begin{bmatrix}
    A^*(\mathrm p(t)) & I
\end{bmatrix}
(\Psi_f \otimes I)
\begin{bmatrix}
   A(\mathrm p(t))\\
   I
\end{bmatrix}
\int\limits_{\Omega_f}
\mathcal V_{\dot{\mathrm{p}}}^2 (\jmath\omega) 
    \mathcal U(\jmath\omega)
   \mathcal U^*(\jmath\omega) 
    (\mathcal V_{\dot{\mathrm{p}}}^2 (\jmath\omega))^*
    \inva \omega 
    \bigg)
 \inva t
\\
    +
\int_0^\infty 
\tr\bigg(
\begin{bmatrix}
    A^*(\mathrm p(t)) & I
\end{bmatrix}
(\Psi_f \otimes I)
\begin{bmatrix}
   A(\mathrm p(t))\\
   I
\end{bmatrix}
\iint\limits_{\Omega_f\times\Omega_f}
\He\Bigl(
    \mathcal V_{\mathrm{p}}^2 (\jmath\omega) 
    \mathcal U(\jmath\omega)
   \mathcal U^*(\jmath\tilde\omega) 
   (\mathcal V_{\mathrm{p}}^2 (\jmath\tilde\omega))^*
    \Bigl)
    \inva \omega \inva \tilde \omega
    \bigg)
 \inva t
 \\
+
    \int_0^\infty 
     \tr \bigg(
   \iint\limits_{\Omega_f\times\Omega_f, \omega\neq \tilde\omega}  
   \He\Bigl(
   \bigl(\begin{bmatrix} 
    \jmath\omega  &     1    
    \end{bmatrix}  \Psi_{f}
    \begin{bmatrix}    
    \jmath\tilde\omega &     1
    \end{bmatrix}^*+\delta^2
    \bigl)
   \mathcal V_{\mathrm{p}}^1(\jmath\omega) 
     \mathcal U(\jmath\omega)
   \mathcal U^*(\jmath\tilde\omega) 
   (\mathcal V_{\mathrm{p}}^1 (\jmath\tilde\omega))^*
    \Bigl)
    \inva \omega \inva \tilde \omega
    \bigg)
    \inva t
    \\
    +
\int_0^\infty 
\tr\bigg(
\begin{bmatrix}
    A^*(\mathrm p(t)) & I
\end{bmatrix}
(\Psi_f \otimes I)
\begin{bmatrix}
   A(\mathrm p(t))\\
   I
\end{bmatrix}
\\
\times
\iint\limits_{\Omega_f\times\Omega_f,\omega\neq\tilde\omega}
\He \Bigl(   
    \mathcal V_{\dot{\mathrm{p}}}^1 (\jmath\omega) 
    \mathcal U(\jmath\omega)
   \mathcal U^*(\jmath\tilde\omega) 
    (\mathcal V_{\dot{\mathrm{p}}}^1 (\jmath\tilde\omega))^*
    +\mathcal V_{\dot{\mathrm{p}}}^2 (\jmath\omega) 
    \mathcal U(\jmath\omega)
   \mathcal U^*(\jmath\tilde\omega) 
   (\mathcal V_{\dot{\mathrm{p}}}^2 (\jmath\tilde\omega))^*
    \Bigl)
    \inva \omega \inva \tilde \omega
    \bigg)
 \inva t
    \\
+ 
\int_0^\infty 
\tr\bigg(
\bigl(
\begin{bmatrix}
    A^*(\mathrm p(t)) & I
\end{bmatrix}
(\Psi_f \otimes I)
\begin{bmatrix}
   A(\mathrm p(t))\\
   I
\end{bmatrix}+\delta ^2 I
\bigl)
\\
\times
\iint\limits_{\Omega_f\times\Omega_f}
 \He\Bigl(
    \mathcal V_{\mathrm{p}}^2 (\jmath\omega) 
    \mathcal U(\jmath\omega)
   \mathcal U^*(\jmath\tilde\omega) 
    \bigl(\mathcal V_{\dot{\mathrm{p}}}^1 (\jmath\tilde\omega)
    +\mathcal V_{\dot{\mathrm{p}}}^2 (\jmath\tilde\omega) \bigl)^* 
    +
    \mathcal V_{\dot{\mathrm{p}}}^1(\jmath\omega)
    \mathcal U(\jmath\omega)
   \mathcal U^*(\jmath\tilde\omega) 
    \bigl(\mathcal V_{\mathrm{p}}^2 (\jmath\omega) 
    \mathcal V_{\dot{\mathrm{p}}}^2 (\jmath\omega)
    \bigl)^*
\\
+
\mathcal V_{\dot{\mathrm{p}}}^2(\jmath\omega)
\mathcal U(\jmath\omega)
   \mathcal U^*(\jmath\tilde\omega) 
   \bigl(\mathcal V_{\mathrm{p}}^2 (\jmath\omega) 
    \mathcal V_{\dot{\mathrm{p}}}^1 (\jmath\omega)
    \bigl)^*
    \Bigl)
    \inva \omega \inva \tilde \omega
    \bigg)
 \inva t
    \\
     +
\int_0^\infty 
\tr\bigg( 
\He \Bigl(
\bigl(
\begin{bmatrix}
    A^*(\mathrm p(t)) & I
\end{bmatrix}
(\Psi_f\otimes I)
\begin{bmatrix}
    \jmath\omega I\\
    I
\end{bmatrix}+\delta ^2 I \bigl)
 \\
\times
\iint\limits_{\Omega_f\times\Omega_f}
    \mathcal V_{\mathrm{p}}^1 (\jmath\omega) 
     \mathcal U(\jmath\omega)
   \mathcal U^*(\jmath\tilde\omega) 
    (\mathcal V_{\mathrm{p}}^2 (\jmath\tilde\omega)
    +\mathcal V_{\dot{\mathrm{p}}}^1 (\jmath\tilde\omega)
    +\mathcal V_{\dot{\mathrm{p}}}^2 (\jmath\tilde\omega)
    )^* 
     \Bigl)
     \inva \omega \inva \tilde \omega
    \bigg)
 \inva t
 \\
+ 
\int_0^\infty 
\tr\bigg(
\He\Bigl(
\bigl( \begin{bmatrix}
    -\jmath\tilde\omega I &     I
\end{bmatrix}
(\Psi_f\otimes I)
\begin{bmatrix}
    A(\mathrm p(t)) \\
    I
\end{bmatrix} 
+\delta ^2 I \bigl)
\\
\times
\iint\limits_{\Omega_f\times\Omega_f}
    \bigl(\mathcal V_{\mathrm{p}}^2 (\jmath\omega) 
    +\mathcal V_{\dot{\mathrm{p}}}^1 (\jmath\omega) 
    +\mathcal V_{\dot{\mathrm{p}}}^2 (\jmath\omega)\bigl) 
      \mathcal U(\jmath\omega)
   \mathcal U^*(\jmath\tilde\omega) 
    (\mathcal V_{\mathrm{p}}^1 (\jmath\tilde\omega))^*
    \Bigl) 
    \inva \omega \inva \tilde \omega
  \bigg)
 \inva t.
    \end{array}
\end{equation}


According to Proposition \ref{gramina_LPV}, the last five terms of \eqref{proof_int_IQC} are uniformly bounded, then \eqref{proof_int_IQC} can be written as 
\begin{equation}\label{proof_int_IQC_2}
\begin{array}{l}
\int_0^\infty
\tr\bigg(  \He\Bigl(
\begin{bmatrix} 
    \dot x(t)& x(t) 
    \end{bmatrix} 
    \Psi_{f+\delta} 
    \begin{bmatrix} 
    \dot x^*(t)\\ 
    x^*(t) 
    \end{bmatrix} 
\Bigl)\bigg) 
\inva t
\\
\geq
\int_0^\infty 
\tr \bigg(
\int\limits_{\Omega_f}  
   \delta^2
      \mathcal V_{\mathrm{p}}^1(\jmath\omega) 
     \mathcal U(\jmath\omega)
   \mathcal U^*(\jmath\omega) 
   (\mathcal V_{\mathrm{p}}^1 (\jmath\omega))^*
    \inva \omega
     \bigg)
     \inva t
    \\
-
\int_0^\infty 
\tr\bigg(
\begin{bmatrix}
    A^*(\mathrm p(t)) & I
\end{bmatrix}
(\Psi_f \otimes I)
\begin{bmatrix}
   A(\mathrm p(t))\\
   I
\end{bmatrix}
\int\limits_{\Omega_f}
     \mathcal V_{\mathrm{p}}^2 (\jmath\omega) 
    \mathcal U(\jmath\omega)
   \mathcal U^*(\jmath\omega) 
   (\mathcal V_{\mathrm{p}}^2 (\jmath\omega))^*
   \inva\omega
     
    \bigg)
 \inva t
     \\
+
\int_0^\infty 
\tr\bigg(
\begin{bmatrix}
    A^*(\mathrm p(t)) & I
\end{bmatrix}
(\Psi_f \otimes I)
\begin{bmatrix}
   A(\mathrm p(t))\\
   I
\end{bmatrix}
\int\limits_{\Omega_f}
    \mathcal V_{\dot{\mathrm{p}}}^1 (\jmath\omega) 
     \mathcal U(\jmath\omega)
   \mathcal U^*(\jmath\omega) (\mathcal V_{\dot{\mathrm{p}}}^1 (\jmath\omega))^*
    \inva \omega 
    \bigg)
 \inva t
\\
+
\int_0^\infty 
\tr\bigg(
\begin{bmatrix}
    A^*(\mathrm p(t)) & I
\end{bmatrix}
(\Psi_f \otimes I)
\begin{bmatrix}
   A(\mathrm p(t))\\
   I
\end{bmatrix}
\int\limits_{\Omega_f}
   \mathcal V_{\dot{\mathrm{p}}}^2 (\jmath\omega) 
    \mathcal U(\jmath\omega)
   \mathcal U^*(\jmath\omega) (\mathcal V_{\dot{\mathrm{p}}}^2 (\jmath\omega))^*
    \inva \omega 
    \bigg)
 \inva t,
    \end{array}
\end{equation}
hence, for $\delta$ satisfies inequality \eqref{delta}, then \eqref{proof_int_IQC_2} is non-negative, and the $\Omega_{f+\delta}$-IQC \eqref{s_IQC_LPV} holds.

In particular, for UAS LPV system, the second and fifth terms in \eqref{proof_int_IQC_2} tend to zero, so \eqref{delta-UAS} holds.

\end{proof}

\begin{remark}

For LPV systems whose poles location are constrained in a finite-frequency range $\Omega_f$, it also means that the following matrix inequality holds for all admissible parameter trajectories $\mathrm p(t)$
\[\begin{bmatrix}
   A(\mathrm p(t)) \\ I 
\end{bmatrix}^* (\Psi_{f} \otimes I)
     \begin{bmatrix}
     A(\mathrm p(t)) \\ I
        \end{bmatrix} 
        \geq 0,
        \]
the $\Omega_f$-IQC is guaranteed to be non-negative.

\end{remark}

\begin{remark}
The time domain interpretation presented in the Theorem \ref{Th_LPV_BIBS/UAS} provide us a further insight for characterizing the dynamic behavior of LPV system under the excitation of finite-frequency input signal. Although there are out-of-band frequency components due to the inter-modulation between the input signal and the time-varying parameters, it is still feasible to generate a clear picture of the dynamic behavior, with regard to the integral quadratic function of system state and its derivative.

\end{remark}

\subsection{New extension of gKYP lemma}

Based on the non-negative ${\Omega}_{f+\delta}$-IQC indicated in Theorem \ref{Th_LPV_BIBS/UAS}, we now in position to give the analysis condition for LPV systems, which is an extension of gKYP lemma.

\begin{theorem}\label{Extension-LPV-gKYP} 
Consider a LPV system \eqref{LPV-time} with zero initial condition and let the result of Theorem \ref{Th_LPV_BIBS/UAS} holds. 
If there exist parameter-dependent
Hermitian matrices $P(\mathrm p(t))$, $Q(\mathrm p(t))>0$, with $P(\mathrm p(t))$ being
differentiable, and the following inequality holds
 \begin{equation}
 \label{eqn_UASLPV-1}
\begin{split}
               \begin{bmatrix}
				A(\mathrm p(t)) &  B(\mathrm p(t))\\
				I& 0 \\
			 \end{bmatrix}^*
             \bigl(\Theta \otimes P(\mathrm p(t)) + \Theta_d \otimes \dot P(\mathrm p(t))+{\Psi_{f+\delta}} \otimes Q(\mathrm p(t))\bigl)
              \begin{bmatrix}
				A(\mathrm p(t)) &  B(\mathrm p(t))\\
				I & 0\\
			\end{bmatrix} &
            \\ 
        +\begin{bmatrix}
				C (\mathrm p(t))   & D (\mathrm p(t))  \\
				0 & I \\
			\end{bmatrix}^*\Pi \begin{bmatrix}
				C (\mathrm p(t))   & D (\mathrm p(t))  \\
				0 & I \\
			\end{bmatrix}   &\leq 0,
            \end{split}
	\end{equation}
for some performance index matrix $\Pi \in  \mathbb R^{(q+p)\times (q+p)}$, $\Theta,\Theta_d$ as defined in Lemma \ref{lemma_LPV_FF}, and {$\Psi_{f+\delta}$ as in \eqref{new_range}}, 
then the LPV system \eqref{LPV-time} satisfies the input-output performance \eqref{IO-Performance-TD}/\eqref{IO-Performance-FD}.

\end{theorem}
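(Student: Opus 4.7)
The plan is to execute the classical dissipation/supply-rate argument that underlies every KYP-type implication: pre- and post-multiply the LMI \eqref{eqn_UASLPV-1} by $\bigl[\begin{smallmatrix} x(t) \\ u(t) \end{smallmatrix}\bigr]^*$ and its conjugate transpose, recognize the result as the sum of (i) a Lyapunov-function time derivative, (ii) an $\Omega_{f+\delta}$-IQC integrand, and (iii) the performance supply rate, and then integrate in time so that (i) telescopes to zero, (ii) has a known non-negative sign by Theorem \ref{Th_LPV_BIBS/UAS}, and (iii) is precisely the quantity that we want to bound.

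Concretely, I would first substitute $\dot x = A(\mathrm p)x + B(\mathrm p)u$ and $y = C(\mathrm p)x + D(\mathrm p)u$ into the congruence of \eqref{eqn_UASLPV-1}, producing the pointwise inequality
\[
\begin{bmatrix}\dot x\\x\end{bmatrix}^{*}\!\bigl(\Theta\otimes P(\mathrm p)+\Theta_d\otimes\dot P(\mathrm p)\bigr)\!\begin{bmatrix}\dot x\\x\end{bmatrix}
+\begin{bmatrix}\dot x\\x\end{bmatrix}^{*}\!\bigl(\Psi_{f+\delta}\otimes Q(\mathrm p)\bigr)\!\begin{bmatrix}\dot x\\x\end{bmatrix}
+\begin{bmatrix}y\\u\end{bmatrix}^{*}\!\Pi\!\begin{bmatrix}y\\u\end{bmatrix}\le 0.
\]
Using $\Theta=\bigl[\begin{smallmatrix}0&1\\1&0\end{smallmatrix}\bigr]$ and $\Theta_d=\bigl[\begin{smallmatrix}0&0\\0&1\end{smallmatrix}\bigr]$, the first block expands to $\dot x^*P(\mathrm p)x+x^*P(\mathrm p)\dot x+x^*\dot P(\mathrm p)x=\tfrac{d}{dt}\bigl(x^{*}(t)P(\mathrm p(t))x(t)\bigr)$. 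Integrating over $[0,\infty)$, this Lyapunov piece telescopes to $\bigl[x^{*}P(\mathrm p)x\bigr]_{0}^{\infty}$, which vanishes since $x(0)=0$ by hypothesis and, under the BIBS/UAS premise of Theorem \ref{Th_LPV_BIBS/UAS} together with $u\in\mathcal L_2$, $x(t)\to 0$ (in particular, on $[0,T]$ with $T\to\infty$ the boundary contribution is controlled by the finite Gramians of Proposition \ref{gramina_LPV}). By Theorem \ref{Th_LPV_BIBS/UAS}, the $\Omega_{f+\delta}$-IQC \eqref{s_IQC_LPV} is non-negative, so the second (IQC) integral is non-negative once weighted by $Q(\mathrm p(t))>0$. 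What remains is exactly $\int_0^\infty [y^{*},u^{*}]\,\Pi\,[y;u]^{\!T}\,\inva t\le 0$, i.e., \eqref{IO-Performance-TD}, and the equivalent \eqref{IO-Performance-FD} follows from the Parseval identity \eqref{eq:Parseval}.

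The main obstacle I anticipate is upgrading the \emph{scalar} IQC \eqref{s_IQC_LPV} of Theorem \ref{Th_LPV_BIBS/UAS} to the \emph{matrix-weighted} statement $\int_0^\infty[\dot x;x]^{*}(\Psi_{f+\delta}\otimes Q(\mathrm p(t)))[\dot x;x]\,\inva t\ge 0$ that the dissipation step actually needs. For constant $Q\ge 0$ in the LTI setting (Proposition \ref{prop:IQC-LTI}), this upgrade is immediate because $Q$ commutes with every time/frequency manipulation. For the LPV case with a genuinely parameter-dependent $Q(\mathrm p(t))$, one has to carry a factor $Q^{1/2}(\mathrm p(t))$ through the decomposition \eqref{x-Dx} and re-examine the Gramian estimates of Proposition \ref{gramina_LPV} and the sizing of $\delta$ in \eqref{delta}--\eqref{delta-UAS}; essentially the same boundedness arguments (Lemmas \ref{boundness:BIBS}, \ref{boundness:UAS}) apply, but verifying that the weighted traces $\tr\bigl(Q(\mathrm p)\mathbb W_{\cdot}(\Omega_f)\bigr)$ inherit the same signs is the only place where the proof departs from a pure textbook dissipation template.
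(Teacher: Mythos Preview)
Your argument is exactly the paper's: congruence of \eqref{eqn_UASLPV-1} with $[x(t);u(t)]$, identification of the $\Theta$/$\Theta_d$ block as $\tfrac{d}{dt}\bigl(x^{*}P(\mathrm p)x\bigr)$, integration on $[0,\infty)$, and an appeal to $x(0)=0$, $x(\infty)=0$, and Theorem~\ref{Th_LPV_BIBS/UAS} to discard the boundary and IQC terms. The paper's proof occupies five displayed lines and is indistinguishable from your outline.

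The obstacle you flag, however, is real and is \emph{not} resolved in the paper either. Theorem~\ref{Th_LPV_BIBS/UAS} delivers only the scalar IQC \eqref{s_IQC_LPV}, whereas the dissipation step requires $\int_0^\infty[\dot x;x]^{*}(\Psi_{f+\delta}\otimes Q(\mathrm p(t)))[\dot x;x]\,\inva t\ge 0$ with a genuinely time-varying weight $Q(\mathrm p(t))>0$. The paper's proof simply writes ``according to Theorem~\ref{Th_LPV_BIBS/UAS}'' and moves on; it neither restricts $Q$ to be constant (which, as you note, would make the upgrade immediate by commuting $Q^{1/2}$ through the trace) nor indicates how the Gramian bounds in \eqref{delta}--\eqref{delta-UAS} should be modified to absorb a parameter-dependent $Q$. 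So your diagnosis is sharper than the paper's own treatment: the template matches, and the one nontrivial step you isolate is precisely the step the paper leaves implicit.
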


\begin{proof}	
Multiplying the inequality \eqref{eqn_UASLPV-1} by 
$\begin{bmatrix}
	x(t)\\
	u(t)\\
\end{bmatrix}$ 
from the right and by its conjugate transpose from the left, we obtain
\begin{equation*}
    \begin{split}
\begin{bmatrix}
		x(t)\\
		u(t)
\end{bmatrix}^* \begin{bmatrix}
	C(\mathrm p(t)) & D(\mathrm p(t))\\
					0 & I
\end{bmatrix}^*\Pi\begin{bmatrix}
		 C(\mathrm p(t)) &  D(\mathrm p(t))\\
			0 & I
		\end{bmatrix}\begin{bmatrix}
					x(t)\\
					u(t)
				\end{bmatrix} &\\
+ \frac {d}{dt}(x^*(t) P(\mathrm p(t)) x(t)) 
+\begin{bmatrix}
                   \dot x(t)\\
                          x(t)
    \end{bmatrix}^* {\Psi_{f+\delta}} \otimes Q (\mathrm p(t)) 
    \begin{bmatrix}
                       \dot x(t)\\
                          x(t)    
                       \end{bmatrix}  &\leq 0.
    \end{split}
\end{equation*}
Integrating from $t=0$ to $\infty$ and according to Theorem \ref{Th_LPV_BIBS/UAS}, we obtain    
\begin{equation*}
\begin{split}
        &\int_0^{\infty}  \begin{bmatrix}
					y(t)\\
					u(t)
				\end{bmatrix}^* \Pi
				\begin{bmatrix}
					y(t)\\
					u(t)
				\end{bmatrix}\inva t  
                \\
                &\leq 
                x(0)P(\mathrm{p}(0))x(0)-x(\infty)P(\mathrm{p}(\infty))x(\infty) 
    - \int_0^{\infty}      
    \begin{bmatrix}
    \dot x(t)\\
     x(t)
    \end{bmatrix}^* 
    {\Psi_{f+\delta}} \otimes Q (\mathrm p(t)) \begin{bmatrix}
                       \dot x(t)\\
                          x(t)    
                       \end{bmatrix} \inva t
\\
&\leq 0.
\end{split}
\end{equation*}

	\end{proof}

\begin{remark}
The underlying difference between the performance-revealing inequality \eqref{eqn_UASLPV-1} in Theorem \ref{Extension-LPV-gKYP} and the performance-revealing inequality \eqref{L2_FF_LPV} in Lemma \ref{lemma_LPV_FF} lies in the frequency selective weight matrix. The major contribution of Theorem \ref{Extension-LPV-gKYP} is that an enlarged frequency interval and 
new frequency selective weight matrix imply direct validity of finite-frequency constrained input-output performance criteria for finite-frequency signals and, thus, meets the demand of finite-frequency analysis problems.
\end{remark}

\begin{remark}

The inter-relationship between the input signals and the input-output performance involved in LPV-KYP lemma \ref{lemma_LPV_EF}, and the inter-relationship between the input signals, time domain interpretation as well as the input-output performance involved in LPV-gKYP lemma \ref{lemma_LPV_FF} and Theorem \ref{Extension-LPV-gKYP} by the Fig. \ref{fig:IO-LPV-diagram}.

\begin{figure}[htp]
    \centering
    \includegraphics[width=1\linewidth]{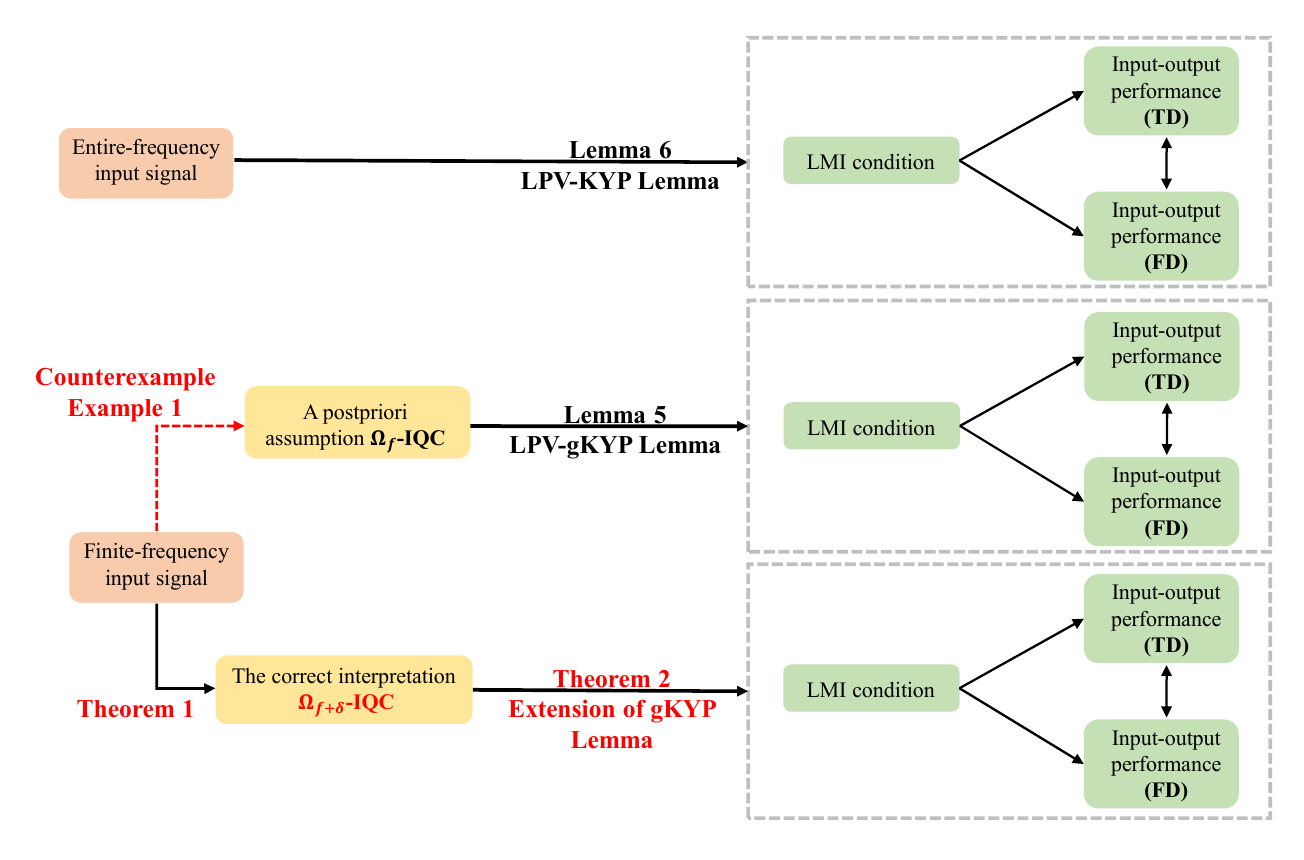}
    \caption{On finite-frequency analysis of general LPV systems.}
    \label{fig:IO-LPV-diagram}
\end{figure}

\end{remark}

\begin{remark}
The concrete procedures on how to converting infinite number LMIs to finite LMIs is omitted here, for more details, please refer to maximum at the vertices/partial convexity technique 
(\cite{cox2018affine}, Lem. 1).
Finally, the partial-convexity relaxation technical \cite{con_gahinet1996affine, con_chilali1999robust, con_mahmoud2002discrete} is adopted to generate numerical tractable algorithms, which allow one to check the time-varying parameter caused infinite-dimensional LMIs by a finite number of LMIs.

\end{remark}

\section{Numerical Experiments}

\begin{example}\label{example2}

Let us reconsider the LPV system given by Example \ref{exa:counterexample}. 
At first stage, 
let us check the gap between the matrix $A(\mathrm{p}(t))$ and finite-frequency range $\Omega_f:=[-1,1]$. 
Obviously, the uniform spectral radius of $A(\mathrm p(t))$ is greater than the spectrum of finite-frequency range $\Omega_f$, which means  
\[\begin{bmatrix}
     A(\mathrm p(t)) \\ I
       \end{bmatrix}^* (\Psi_{f}\otimes I) \begin{bmatrix}
     A(\mathrm p(t)) \\ I
         \end{bmatrix} < 0,\]             
so 
\[\bigl(d(A(\mathrm{p}(t)),\Omega_f)\bigl)^2=\lambda_{\rm max}\Bigl(-\left[\begin{smallmatrix}
     A^*(\mathrm{p}(t)) & I
       \end{smallmatrix}\right]
       (\Psi_{f} \otimes I)
       \left[\begin{smallmatrix}
     A(\mathrm{p}(t)) \\
     I
         \end{smallmatrix}\right] \Bigl)
         =164.62.\]

On the basis of this fact, we further compute the enlarged frequency interval ${\Omega}_{f+\delta}$ as follows:

Firstly, by simple computing in Matlab, 
the trace of finite-frequency controllability Gramian is
\begin{equation}
\tr\bigl(\mathbb W_{\mathrm p}(\Omega_f)\bigl)
= 0.4858.
\end{equation}

Then, by setting $c_1=0.5$, $c_2=0.6$, $c_3=7.4$ and solving two Lyapunov inequalities in \eqref{Lya_W_dp}, we obtain
\[
\tr\bigl(\mathbb{W}_{\dot{\mathrm{p}}}(\Omega_f)\bigl)
\leq 0.075351+ 0.026327   \approx0.1017.
\]

So, the minimum value of new frequency range satisfies
\begin{equation}
    \begin{array}{l}
    \delta^2= 
    \bigl(d(A(\mathrm p(t)),\Omega_f)\bigl)^2
    \tr\bigl(\mathbb W_{\dot {\mathrm p}}(\Omega_f)\bigl)
   \times
    \Bigl(  
    \tr(\mathbb W_{\mathrm p}(\Omega_f))
    \Bigr)^{-1}
    =34.4624.
    \end{array}
\end{equation}

Over the new frequency interval {${\Omega}_{f+\delta}:=[-5.955,5.955]$}, 
the updated minimal performance upper bound ${\gamma}_{f-u}^{\star}$ is given by:
\[\begin{array}{l}
\sqrt{\frac{\int_{0}^{\infty} y^*(\tau) y(\tau) \inva \tau}  {\int_{0}^{\infty} u^*(\tau) u(\tau) \inva \tau} }<{\gamma}_{f-{u}}^{\star} \approx 5.0313,
\end{array}\]
which, although larger than $\gamma_f^{\star}$, the actual input-output performance is no more than the bound $\gamma_{f-u}^{\star}$ (see Fig. \ref{fig:E3-gamma}). 
Moreover, compared with the bound $\gamma_e^{\star}\approx 5.2445$ obtained for the entire-frequency range, our given bound $\gamma_{f-u}^{\star}\approx 4.9161$ achieve an approximate 4.07\% improvement in performance.

\begin{figure}[htbp]
	\centering
	\includegraphics[width=0.7\textwidth]{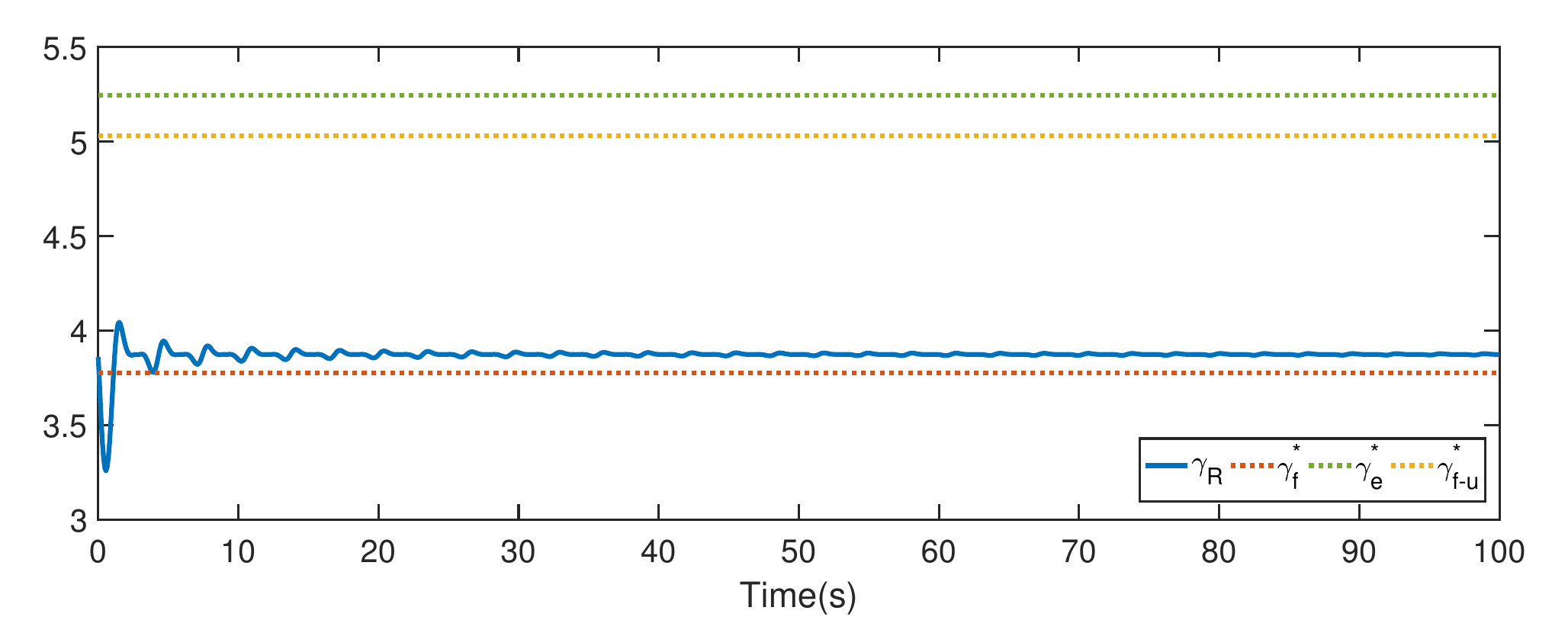}
	\caption{Actual input-output performance index $\gamma_R$, performance bounds $\gamma_f^{\star}$, $\gamma_{f-u}^{\star}$ and $\gamma_e^{\star}$. }
	\label{fig:E3-gamma}
\end{figure}

Additionally, the updated scalar-valued ${\Omega}_{f+\delta}$-IQC becomes non-negative as illustrated in Fig. \ref{fig:E3-S}.

\begin{figure}[htbp]
\centering
\includegraphics[width=0.7\linewidth]{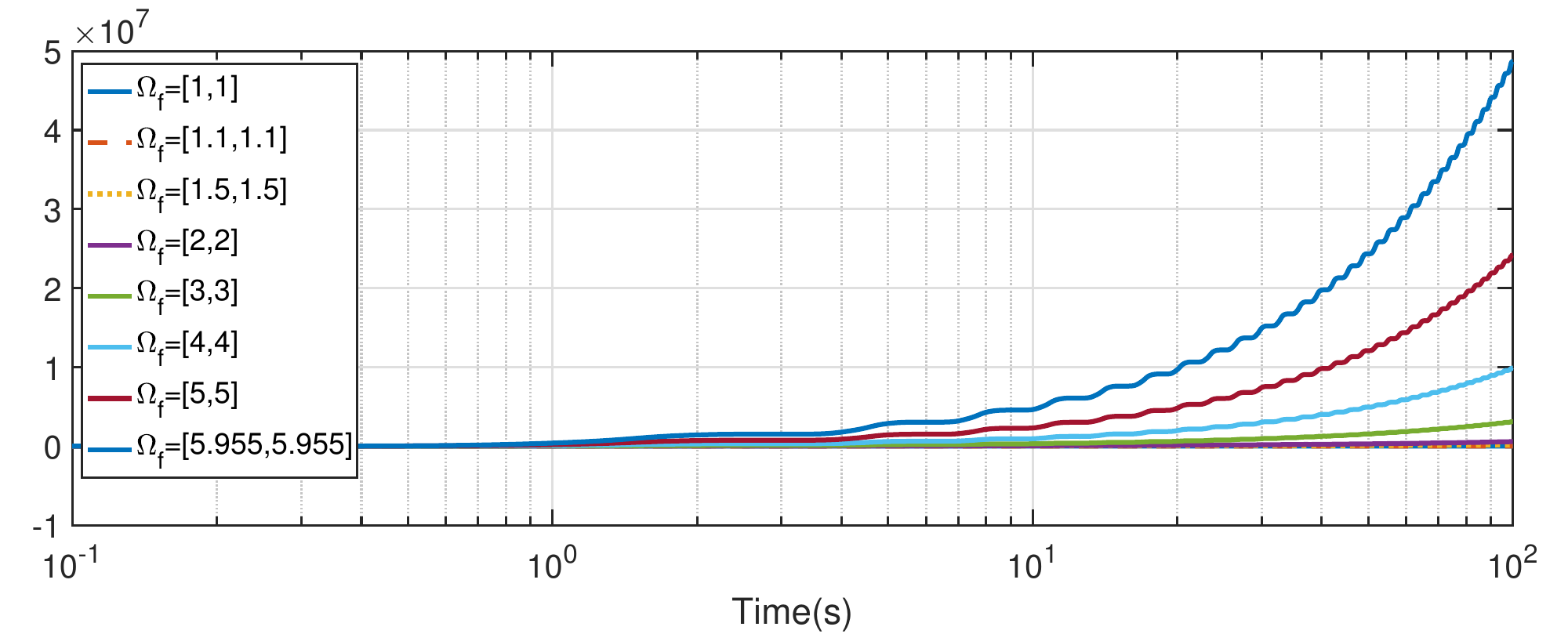}
\caption{The scalar-valued $\Omega_f$-IQCs with distinct finite-frequency ranges.}
\label{fig:E3-S}
\end{figure}

Naturally, if the chosen frequency range exceeds the uniform spectral radius of $A(\mathrm{p}(t))$, i.e., 
\begin{equation*}
    \varpi_l\geq \rho_{\rm unif}(A(\mathrm p(t)))=12.8306,
\end{equation*}
the gap between the matrix spectrum and the finite-frequency range vanishes. Consequently, no further enlargement of the frequency range is necessary.

Let us choose $\Omega_f:=[-12.8306,12.8306]$, then 
the updated minimal performance upper bound ${\gamma}_{f-new}^{\star}$ is given by:
\[\begin{array}{l}
\sqrt{\frac{\int_{0}^{\infty} y^*(\tau) y(\tau) \inva \tau}  {\int_{0}^{\infty} u^*(\tau) u(\tau) \inva \tau} }<{\gamma}_{f-new}^{\star} \approx 5.0313,
\end{array}\]
then the actual input-output performance does not exceed the updated minimal performance upper bound ${\gamma}_{f-new}^{\star}$, and the scalar valued $\Omega_f$-IQC is always non-negative, shown in Fig. \ref{fig:E4-gamma} and Fig. \ref{fig:E4-S}.

\begin{figure}[htbp]
\centering
\includegraphics[width=0.7\linewidth]{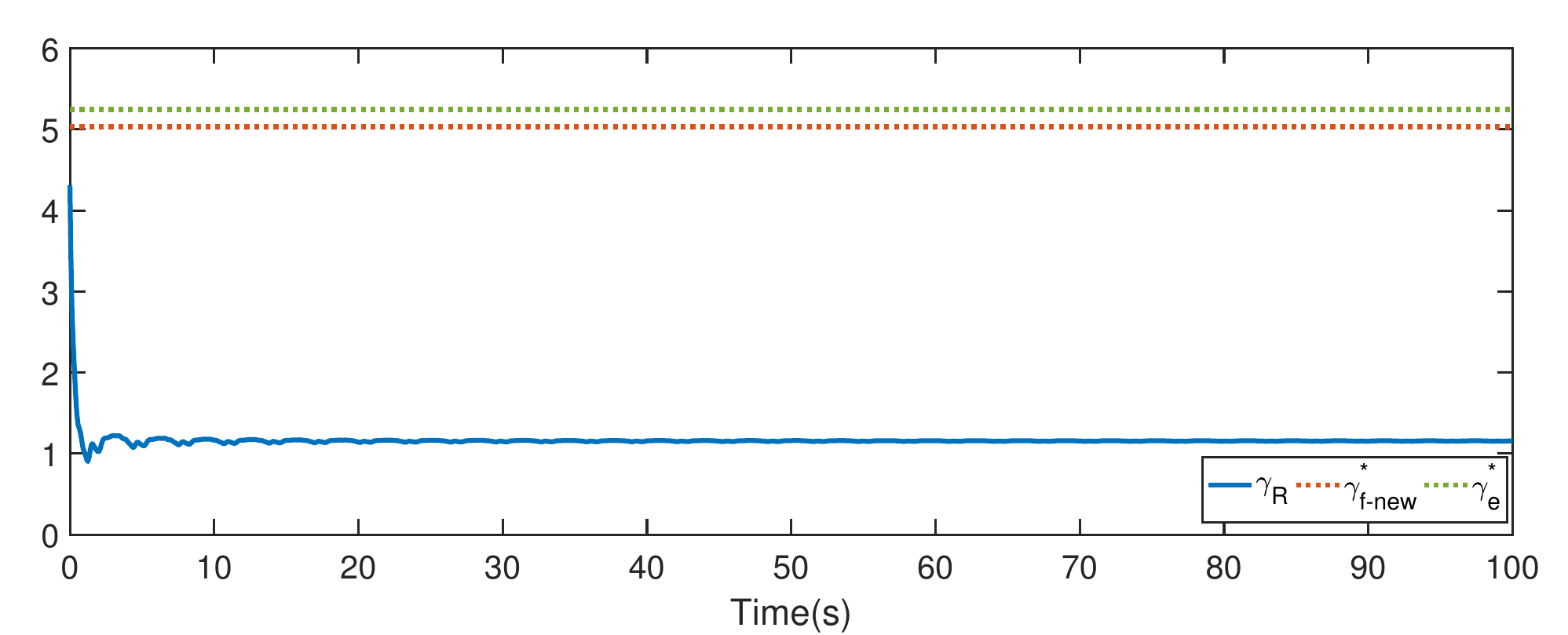}
\caption{Actual input-output performance index $\gamma_R$, performance bounds $\gamma_{f-new}^{\star}$ and $\gamma_e^{\star}$.}
\label{fig:E4-gamma}
\end{figure}

\begin{figure}[htbp]
\centering
\includegraphics[width=0.7\linewidth]{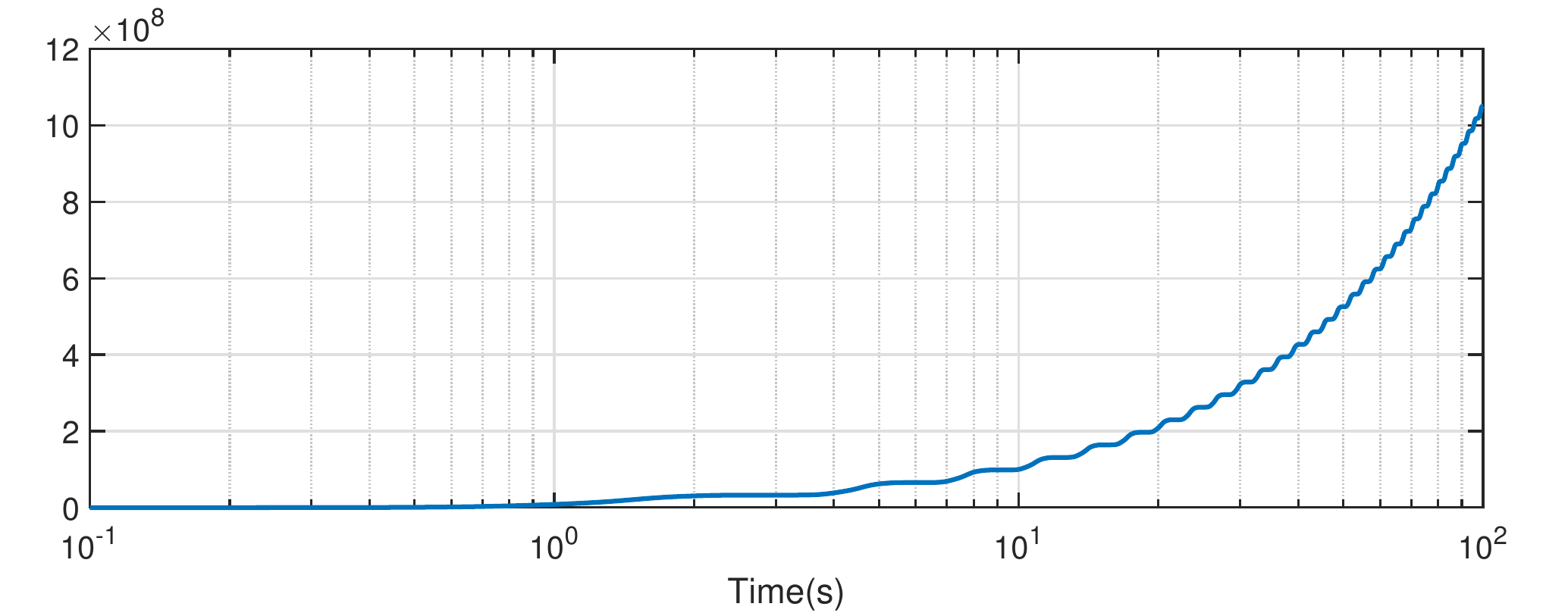}
\caption{The scalar-valued $\Omega_f$-IQC with finite-frequency range.}
\label{fig:E4-S}
\end{figure}

In the following Fig. \ref{fig:E4-Im-Re}, the connections among the system matrix's eigenvalues $\lambda(A(\mathrm p))$, finite frequency interval $\Omega_f:=[-\omega_l,\omega_l]$, enlarged finite frequency interval $\Omega_{f+\delta}:=[-\omega_{l+\delta},\omega_{l+\delta}]$, and the matrix uniform spectral radius $\rho_{\rm unif}(A(\mathrm p))$ is shown, and the blue solid dots represent the eigenvalues of the system matrix.

\begin{figure}[htbp]
\centering
\includegraphics[width=0.5\linewidth]{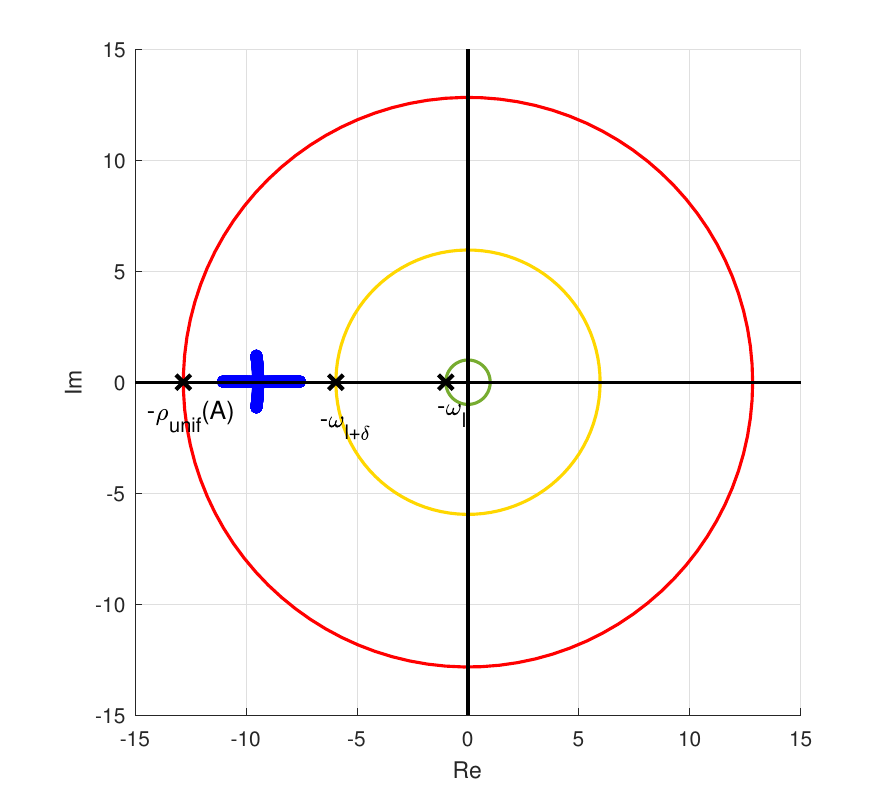}
\caption{The connections among the system matrix's eigenvalues $\lambda(A(\mathrm p))$, finite frequency interval $\Omega_f:=[-\omega_l,\omega_l]$, enlarged finite frequency interval $\Omega_{f+\delta}:=[-\omega_{l+\delta},\omega_{l+\delta}]$, and the matrix uniform spectral radius $\rho_{\rm unif}(A(\mathrm p))$.}
\label{fig:E4-Im-Re}
\end{figure}

The Fig. \ref{fig:E4-Im-Re} offers an intuitive understanding of the frequency range enlargement scheme. Firstly, it is shown that the eigenvalues of the system matrix $A(\mathrm p)$ are distributed outside the circle (green colored, radius$=\omega_l$), hence a frequency range enlargement in this case becomes a necessary step for performance analysis.  Also, it should be noted that the enlarged frequency range could be smaller than the uniform spectral radius of system matrix $A(\mathrm p)$, as the value of frequency enlargement is not only depend on the gap between the system matrix and finite-frequency range, but also depend on the trace of Gramian matrices (see formulas \eqref{delta}, \eqref{delta-UAS}).

\end{example}

\section{Conclusions and future work} \label{sec:conclusion}

A theoretically solid and practically applicable generalization of the celebrated gKYP lemma has been obtained for LPV systems by introducing an frequency range enlargement scheme in the construction of frequency-dependent IQC function. It is revealed that the effect caused by the intermodulation between input signals and the scheduling parameters can be captured by the geometrical gap of system poles and the given frequency range, as well as a group of controllability Gramians. Based on the discovery, a rigorously established gKYP-like condition is developed for LPV systems, which condition captures the input-output relationship of LPV systems under finite-frequency input signal assumption. The proposed extended gKYP lemma leads us to a fully reliable route for finite-frequency analysis of LPV systems. 

It is our belief that the methodology in our development can also be utilized for dealing with finite-frequency analysis and synthesis of various systems including time-varying terms or nonlinear terms, to name a few, linear time-delayed systems with varying delay.

\section*{Acknowledgments}

The corresponding author Xin Du would like to thank Prof. Tong Zhou from Tsinghua University and Prof. Guang-Hong Yang from Northeastern University for their thought-provoking questions about finite-frequency analysis of LPV systems on the occassion of Xin Du's Ph.D dissertation defense, which turned out to be the starting point for this 15 years lasting work.




\bibliographystyle{IEEEtran}
\bibliography{analysis_condition}

\begin{IEEEbiography}[{\includegraphics[width=1in,height=1.25in,clip,keepaspectratio]{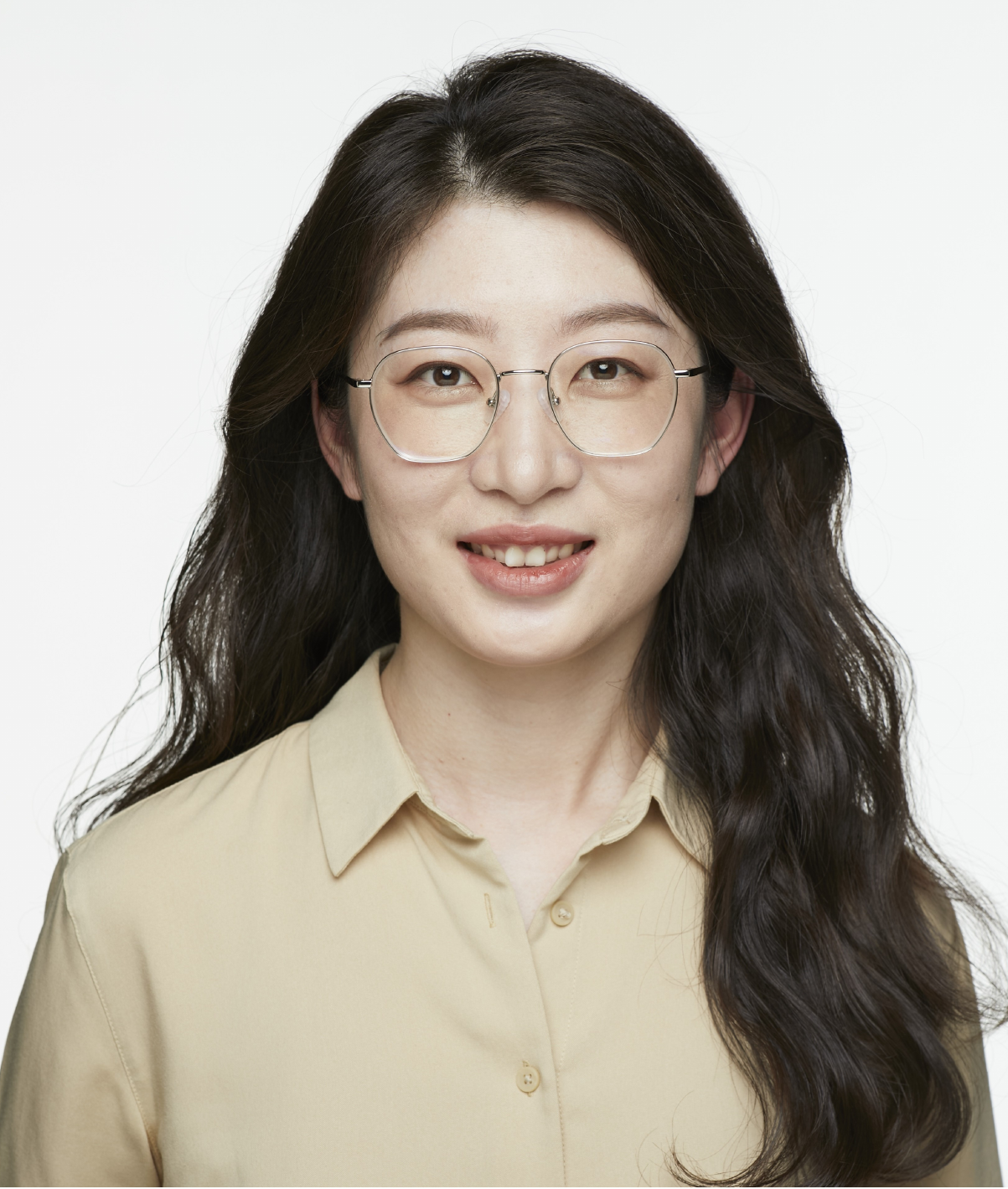}}]{Jingjing Zhang} received the B.S. degree in mathematics and applied mathematics from Hebei Normal University, Hebei, China, in 2017 and the M.S. degree in basic mathematics from Liaoning University, Liaoning, China, in 2020. She is currently pursuing the Ph.D. degree in control theory and control engineering at Shanghai University, Shanghai, China. 
	From 2021 to 2023, she was a joint Ph.D student in department computational methods in systems and control theory (CSC) at Max Planck Institute for Dynamics of Complex Technical Systems, Magdeburg, Germany. 
	From 2023 to now, she was a joint Ph.D student in State Key Laboratory of Mathematical Sciences, Academy of Mathematics and Systems Science (AMSS), Chinese Academy of Sciences (CAS), Beijing, China. Her research interest includes robust control, finite frequency switching control and adaptive control.
\end{IEEEbiography}
{\vspace{-4em}}

\begin{IEEEbiography}[{\includegraphics[width=1in,height=1.25in,clip,keepaspectratio]{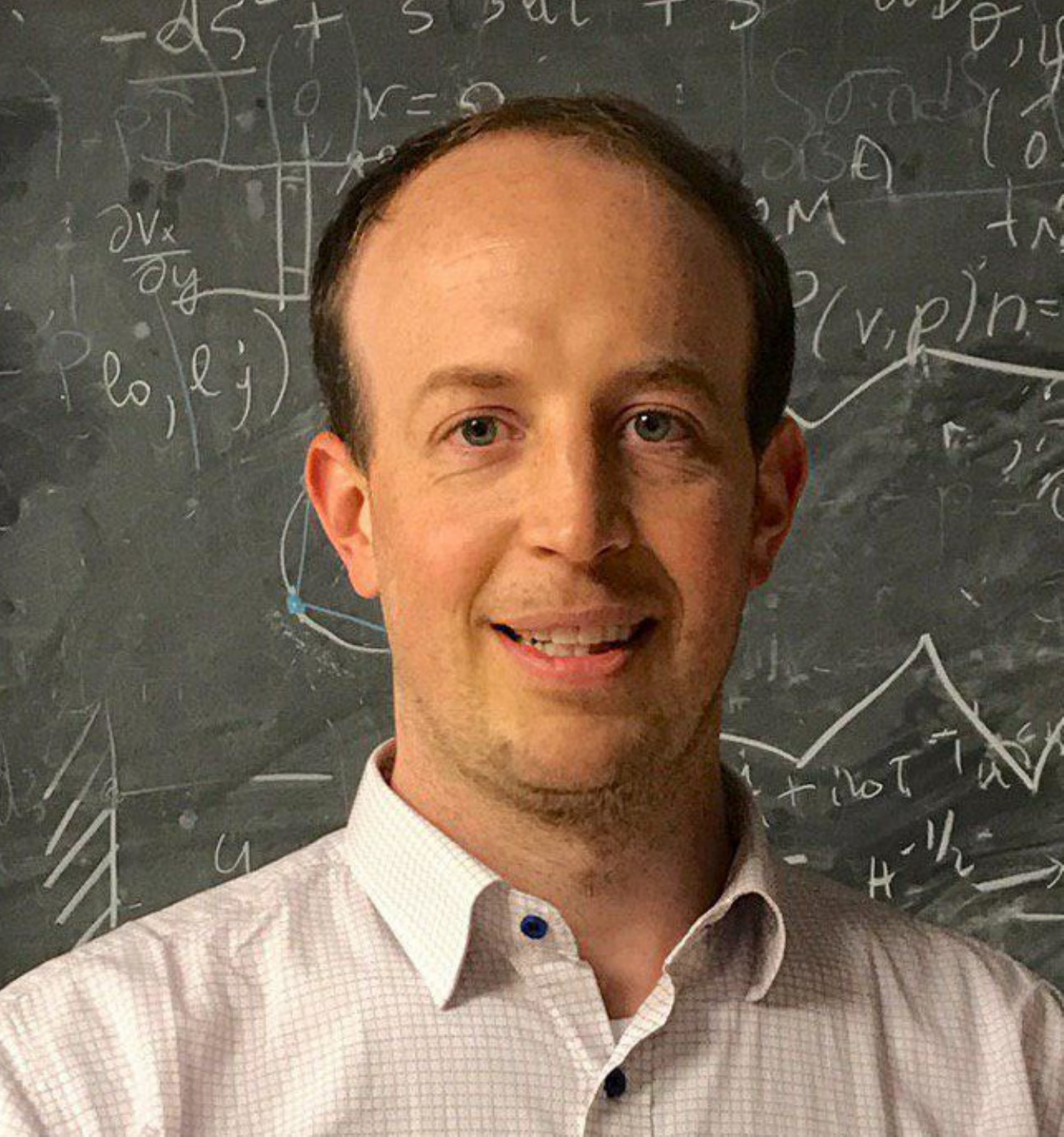}}]{Jan Heiland} received the Ph.D degree in mathematics from the Technical University of Berlin in 2013. Jan Heiland is a researcher and team leader at the Max Planck Institute for dynamics of complex technical systems in Magdeburg, Germany. He has gathered two years of practical experience in the private sector dealing with high-speed trains like the Talgo. Also, he has teaching duties at the University of Magdeburg. During February and March 2020 he collaborated as Visiting Professor at the ERC Advanced Grant project DyCon with Prof. Enrique Zuazua (FAU, University of Deusto and Universidad Autónoma de Madrid).
\end{IEEEbiography}

{\vspace{-4em}}
\begin{IEEEbiography}[{\includegraphics[width=1in,height=1.25in,clip,keepaspectratio]{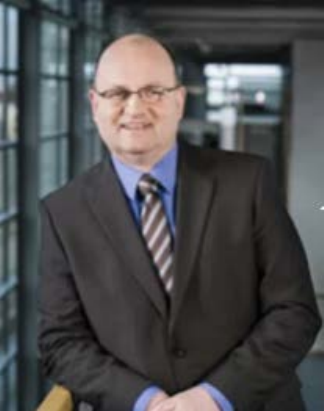}}]{Peter Benner} received the Diploma degree in mathematics from the RWTH Aachen University, Aachen, Germany, in 1993, the Ph.D. degree in mathematics from the University of Kansas, Lawrence, KS, USA, and the TU Chemnitz-Zwickau, Germany, in February 1997, and the Habilitation (Venia Legendi) degree in mathematics from the University of Bremen, Germany, in 2001. After spending a term as a Visiting Associate Professor with the TU Hamburg-Hamburg, Germany, he was a Lecturer in Mathematics with the TU Berlin, Germany, from 2001 to 2003. Since 2003, he has been a Professor of mathematics in industry and technology with the TU Chemnitz. In 2010, he was appointed as one of the four Directors of the Max Planck Institute for Dynamics of Complex Technical Systems, Magdeburg, Germany. Since 2011, he has been an Honorary Professor with the Otto-von-Guericke University of Magdeburg, Germany. His research interests include scientific computing, numerical mathematics, systems theory, and optimal control. He is a SIAM Fellow (Class of 2017).
\end{IEEEbiography}

{\vspace{-4em}}
\begin{IEEEbiography}[{\includegraphics[width=1in,height=1.25in,clip,keepaspectratio]{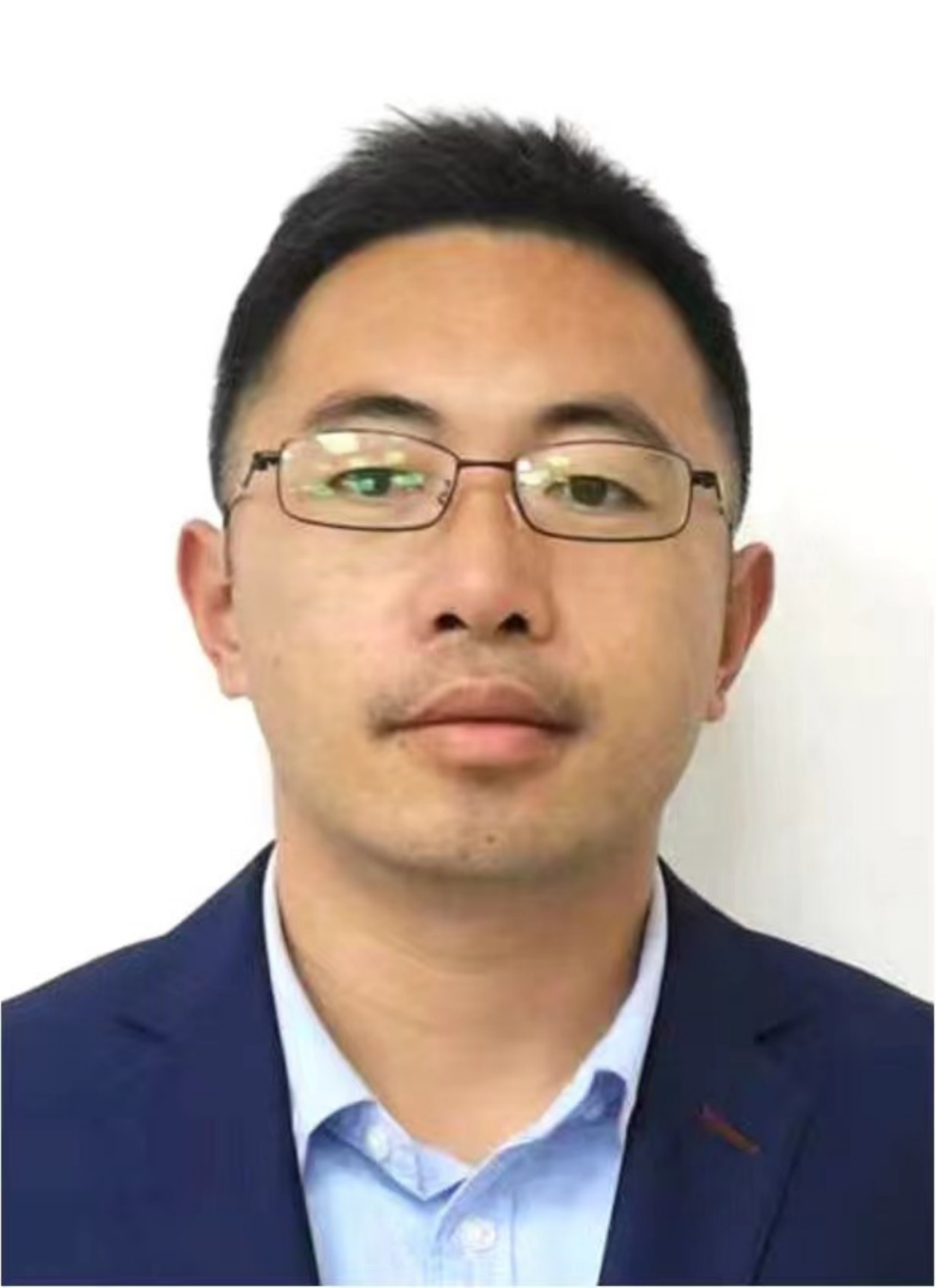}}]{Xin Du} received his B.S degree from University of Science and Technology Beijing, Beijing, China, in 2004. He received Ph.D. degree in Control Theory and Control Engineering from Northeastern University, Shenyang, China, in 2010.  During February 2010 to April 2013, He served as Lecturer/Associate Professor at Shanghai University. He has been a Post-doc research fellow at Max-Planck Institute for Dynamics of Complex Technical Systems, Magdeburg, Germany from April to August, 2012 and from December 2013 to August 2015. Currently, he is an program officer at governmental sector and do academic research individually and independently in his spare time. His research interests include modeling and control of complex systems, model order reduction, dynamical system and machine learning.  
\end{IEEEbiography}

\end{document}